
\documentclass[leqno,11pt]{amsart}


\textheight22cm \topmargin-0.3cm \oddsidemargin0mm
\evensidemargin0mm \textwidth16.7cm \headsep0.8cm \headheight0.4cm
\marginparwidth = -20pt


\numberwithin{equation}{section}


\usepackage[utf8]{inputenc}
\usepackage[english]{babel}

\usepackage{amsmath, amsfonts, amssymb, esint, amsthm, nicefrac}
\usepackage{booktabs, accents, csquotes}
\usepackage{epsfig, graphicx}


\usepackage{psfrag}
\usepackage{graphpap, latexsym, epsf, color}
\usepackage{amssymb, mathrsfs, enumerate}
\usepackage[colorlinks, citecolor=citegreen, linkcolor=refred]{hyperref}

\usepackage{caption, float, enumitem}


\newcommand{\R}{\mathbb{R}}
\newcommand{\RRR}{\mathbb{R}^{3{\times}3}}
\newcommand{\dist}{{\rm dist}}
\newcommand{\SO}{{SO}}

\newcommand{\sym}{{\rm sym}\,}
\newcommand{\curl}{{\rm curl}\,}
\newcommand{\rmd}{{\rm d}}
\newcommand{\LL}{{L}}
\newcommand{\WW}{{W}}
\newcommand{\trsp}{\scriptscriptstyle{\mathsf{T}}}
\mathchardef\emptyset="001F


\definecolor{vgreen}{rgb}{0.1,0.5,0.2}
\definecolor{viola}{RGB}{85,26,139}
\definecolor{citegreen}{rgb}{0,0.6,0}
\definecolor{refred}{rgb}{0.8,0,0}


\newtheorem{thm}{Theorem}[section]    
\newtheorem{lemma}[thm]{Lemma}

\newtheorem{cor}[thm]{Corollary}

\theoremstyle{definition}

\newenvironment{remark}
  {\pushQED{\qed}\remarkr}
  {\popQED\endremarkr}

%
%

\begin{document}

\title[Dimension reduction with oscillatory prestrain]
{Dimension reduction for thin films with transversally varying
  prestrain:\\ the oscillatory and the non-oscillatory case} 
\author{Marta Lewicka and Danka Lu\v ci\'c}
\address{Marta Lewicka: University of Pittsburgh, Department of Mathematics, 
139 University Place, Pittsburgh, PA 15260}
\address{Danka Lu\v ci\'c: SISSA, via Bonomea 265, 34136 Trieste, Italy}
\email{lewicka@pitt.edu, dlucic@sissa.it} 

\begin{abstract}
We study the non-Euclidean (incompatible) elastic energy functionals in the
description of prestressed thin films, at their singular limits ($\Gamma$-limits)
as $h\to 0$ in the film's thickness $h$. Firstly, we extend the prior
results \cite{LP11, BLS, LRR} to arbitrary incompatibility metrics
that depend on both the midplate and the transversal variables (the
``non-oscillatory'' case). Secondly, we analyze a more general class of
incompatibilities, where the transversal dependence of the lower order
terms is not necessarily linear (the ``oscillatory'' case), extending
the results of \cite{ALL, Sch} to arbitrary metrics and higher order scalings. We exhibit
connections between the two cases via projections of appropriate
curvature forms on the polynomial tensor spaces. We also show the
effective energy quantisation in terms of scalings as a power  of $h$
and discuss the scaling regimes $h^2$ (Kirchhoff), $h^4$ (von
K\'arm\'an) in the general case, as well as all possible (even powers)
regimes for conformal metrics, thus paving the way to the subsequent
complete analysis of the non-oscillatory setting in \cite{Lewicka_last}.  Thirdly, we prove the
coercivity inequalities for the singular limits at $h^2$- and $h^4$-
scaling orders, while disproving the full coercivity of the classical
von K\'arm\'an energy functional at scaling $h^4$.
\end{abstract}

\date{\today} 

\maketitle

\section{Introduction}

The purpose of this paper is to further develop the 
analytical tools for understanding the me\-cha\-nisms through which
the local properties of a material lead to changes in its mechanical responses.

Motivated by the idea of imposing and controlling the {\em prestrain} (or
{\em``misfit''}) field in order to cause the plate to achieve a desired shape,
our work is concerned with the analysis of thin elastic films
exhibiting residual stress at free equilibria. Examples of this type of
structures and their actuations include: plastically strained sheets,
swelling or shrinking gels, growing tissues
such as leaves, flowers or marine invertebrates,  nanotubes, atomically thin graphene
layers, etc.  In the same vein, advancements in the construction of novel materials in thin film format 
require an analytical insight how the 
parameters affect the 
product and how to mimic the architectures found in nature.

In this paper, we will be concerned with the forward problem
associated to the mentioned structures, based on the minimization of
the elastic energy with incorporated inelastic effects.


\subsection{The set-up of the problem}

Let $\omega\subset\mathbb R^2$ be an open, bounded, connected set
with Lipschitz boundary. We consider a family of thin hyperelastic
sheets occupying the reference domains: 
$$\Omega^h=\omega\times \Big(-\frac{h}{2},\frac{h}{2}\Big)\subset
\R^3, \qquad 0<h\ll 1.$$
A typical point in $\Omega^h$ is denoted by
$x=(x_1,x_2,x_3)=(x',x_3)$. For $h=1$ we use the notation
$\Omega=\Omega^1$ and view $\Omega$ as the referential rescaling of
each $\Omega^h$ via: $\Omega^h\ni (x', x_3)\mapsto (x', x_3/h)\in\Omega$.

\medskip

In this paper we study the limit behaviour, as $h\to 0$, of the energy functionals:
\begin{equation}\label{functional}
\mathcal{E}^h(u^h)=\frac{1}{h}\int_{\Omega^h} W\big(\nabla u^h(x)G^h(x)^{-1/2}\big)\,\rmd x
\end{equation}
defined on vector fields  $u^h\in \WW^{1,2}(\Omega^h, \mathbb{R}^3)$,
that are interpreted as deformations of $\Omega^h$. We view the limit in
the vanishing thickness $h$ as the {\em singular limit}: indeed at $h=0$
the three-dimensional sheets $\Omega^h$ are  reduced to the two-dimensional midplate
$\omega$, and it is the goal of this paper to derive the energy of
its (non-equidimensional) deformations $y:\omega\to\mathbb{R}^{3}$
relevant for the asymptotics of the minimizing sequences of the
(equidimensional) deformations in (\ref{functional}).
The sheets $\Omega^h$ are characterized by the smooth incompatibility
(Riemann metric) tensors $G^h\in\mathcal{C}^\infty(\bar\Omega^h,
\mathbb{R}^{3\times 3}_{\mathrm{sym, pos}})$, satisfying the following structure
assumption, referred to as {\em``oscillatory''}:
\begin{equation} \label{O}\tag{O}
\left[~\mbox{\begin{minipage}{15cm} $$\mbox{\sc Oscillatory case}:\vspace{1mm}$$
$$G^h(x) = \mathcal{G}^h(x', \frac{x_3}{h}) \quad \mbox{ for all } ~x=(x',x_3)\in \Omega^h,$$
$$\mathcal{G}^h(x', t) = \bar{\mathcal{G}}(x') + h \mathcal{G}_1(x', t) + 
\frac{h^2}{2} \mathcal{G}_2(x', t) + o(h^2)\in\mathcal{C}^\infty(\bar\Omega,\RRR_{\mathrm{sym, pos}}),$$
where $\bar{\mathcal{G}}\in\mathcal{C}^\infty(\bar\omega,\RRR_{\mathrm{sym,
    pos}})$, $\mathcal{G}_1, \mathcal{G}_2\in\mathcal{C}^\infty(\bar\Omega,\RRR_{\mathrm{sym}})$
and $\int_{-1/2}^{1/2}\mathcal{G}_1(x',t)\,\rmd t=0$ for all $x'\in \bar \omega.$
\end{minipage}}\right.
\end{equation}
The requirement of $\bar{\mathcal{G}}$ being independent of the
transversal variable $t\in (-1/2, 1/2)$ is essential for the energy
scaling order: $\inf\mathcal{E}^h\leq Ch^2$. The 
zero mean requirement  on $\mathcal{G}_1$ can be relaxed to requesting
that $\int_{-1/2}^{1/2}\mathcal{G}_1(x',t)_{2\times 2}\,\rmd
t$ be a linear strain with respect to the leading order midplate
metric $(\bar{\mathcal{G}})_{2\times 2}$ (in case
$(\bar{\mathcal{G}})_{2\times 2}=Id_2$ the sufficient and necessary
condition for this to happen is $\curl^{\trsp}\curl
\int_{-1/2}^{1/2}\mathcal{G}_1(x',t)_{2\times 2}\,\rmd t=0$; this case
has been studied in \cite{ALL} where $\bar{\mathcal{G}}=Id_3$),
and we also conjecture that it can be removed altogether, which will be the content of
future work. In the present work, we assume the said condition in
light of the special case (\ref{NO}) below.  

\medskip

\noindent We refer to the family of films $\Omega^h$ prestrained by metrics in (\ref{O}):
\begin{equation}\label{full}
G^h(x) = \bar{\mathcal{G}}(x') + h \mathcal{G}_1(x',
\frac{x_3}{h})+ \frac{h^2}{2} \mathcal{G}_2(x', \frac{x_3}{h}) +
o(h^2)  \quad \mbox{ for all } ~x=(x',x_3)\in \Omega^h, 
\end{equation}
as {"oscillatory"}, and note that this set-up includes a subcase of a
single metric $G^h=G$, upon taking:
$$\mathcal{G}_1(x', t) = t\bar{\mathcal{G}}_1(x'),  
\qquad \mathcal{G}_2(x', t) = t^2\bar{\mathcal{G}}_2(x').$$
We refer to this special case as {\em ``non-oscillatory'}';
formula (\ref{full}) becomes then Taylor's expansion in:
\begin{equation} \label{NO}\tag{NO}
\left[~\mbox{\begin{minipage}{14.9cm} $$\mbox{\sc Non-oscillatory case}:\vspace{1mm}$$
$$G^h = G_{\mid \bar\Omega^h} \quad \mbox{ for some } ~
G\in\mathcal{C}^\infty(\bar\Omega,\RRR_{\mathrm{sym, pos}}),$$
$$ G^h(x) = \bar{\mathcal{G}}(x')+x_3\partial_3 G(x', 0)
+ \frac{x_3^2}{2}\partial_{33}G(x',0) + o(x_3^2) \quad \mbox{ for all }~x=(x',x_3)\in \Omega^h.$$
\end{minipage}}\right.
\end{equation}

\noindent Mechanically, the assumption (\ref{NO}) describes thin
sheets that have been cut out of a single specimen block
$\Omega$, prestrained according to a fixed (though arbitrary) tensor $G$.
\noindent As we shall see, the general case (\ref{O}) can be reduced
to (\ref{NO}) via the following {\em effective metric}: 
\begin{equation} \label{EO}\tag{EF}
\left[~\mbox{\begin{minipage}{15cm} $$\mbox{\sc Effective non-oscillatory case}:\vspace{1mm}$$ 
$$\bar G^h(x) = \bar{G}(x) = \bar{\mathcal{G}} (x') + x_3
\bar{\mathcal{G}}_1(x') + \frac{x_3^2}{2}\bar{\mathcal{G}}_2(x')
\quad \mbox{ for all } \; x=(x', x_3)\in\Omega^h,\vspace{1mm}$$ 
$\mbox{where: }\,\, \bar{\mathcal{G}}_1(x')_{2\times 2} =
12\int_{-1/2}^{1/2}t\mathcal{G}_1 (x', t)_{2\times 2}
\,\rmd t$, $~~ \bar{\mathcal{G}}_1(x')e_3 = -60
\int_{-1/2}^{1/2}(2t^3-\frac{1}{2}t)\mathcal{G}_1(x',t)e_3 \,\rmd t$, \vspace{-2mm}\\
$\mbox{and: }\,\, ~~\,\bar{\mathcal{G}}_2(x')_{2\times 2} =
30\int_{-1/2}^{1/2} (6t^2-\frac{1}{2})\mathcal{G}_2(x', t)_{2\times 2}
\,\rmd t.$
\end{minipage}}\right.
\end{equation}

\medskip

In (\ref{functional}), the homogeneous elastic energy density $W:\mathbb{R}^{3\times
  3}\rightarrow [0,\infty]$ is a Borel measurable function, assumed to satisfy the following properties:
\begin{itemize}
\item [(i)] 
$W(RF)=W(F)$ for all $R\in \SO(3)$ and $F\in \RRR$,
\item [(ii)]
$W(F)=0$ for all $F\in \SO(3)$,
\item [(iii)] 
$W(F)\geq C\,{\rm dist}^2\big(F,\SO(3)\big)$ for all $F\in\RRR$, with
some uniform constant $C>0$,
\item [(iv)] 
there exists a neighbourhood $\mathcal U$ of $\SO(3)$ such that $W$ is
finite and  $\mathcal{C}^2$ regular on $\mathcal U$.
\end{itemize}

\medskip

We will be concerned with the regimes of curvatures of $G^h$ in
(\ref{O}) which yield the incompatibility rate, quantified by $\inf \mathcal{E}^h$, of order higher than
$h^2$ in the plate's thickness $h$. With respect to the prior works in
this context, the present paper proposes the following three new contributions.

\subsection{New results of this work: Singular energies in the non-oscillatory case}

\subsubsection{Kirchhoff scaling regime}\vspace{-3mm}
We begin by deriving (in section \ref{2}), the $\Gamma$-limit of the
rescaled energies $\frac{1}{h^2}\mathcal{E}^h$. In the setting of (\ref{NO}), we obtain:
\begin{equation*}
\begin{split}
\mathcal{I}_2(y) & = \frac{1}{2}\big\|Tensor_2 \big\|_{\mathcal{Q}_2}^2=\frac{1}{2}\big\|x_3\big((\nabla
y)^{\trsp}\nabla \vec
b\big)_{\sym}-\frac{1}{2}x_3\partial_3G(x',0)_{2\times 2}\big\|_{\mathcal{Q}_2}^2\\ & =\frac{1}{24}
\big\|\big((\nabla y)^{\trsp}\nabla \vec
b\big)_{\sym}-\frac{1}{2}\partial_3G(x',0)_{2\times 2} \big\|_{\mathcal{Q}_2}^2.
\end{split}
\end{equation*}
We now explain the notation above. Firstly, $\|\cdot
\|_{\mathcal{Q}_2}$ is a weighted $L^2$ norm in (\ref{poly_norm}) on
the space $\mathbb{E}$ of $\R^{2\times 2}_{\sym}$-valued tensor fields on
$\Omega$. The weights in (\ref{Qform}) are determined by the elastic
energy $W$ together with the leading order metric coefficient 
$\bar{\mathcal{G}}$. The functional $\mathcal{I}_2$ is defined on the
set of isometric immersions $\mathcal{Y}_{\bar{\mathcal{G}}_{2\times
    2}} = \{y\in W^{2,2}(\omega,\R^3);~ (\nabla y)^{\trsp}\nabla 
y=\bar{\mathcal{G}}_{2\times 2}\}$; each such immersion generates
the corresponding Cosserat vector $\vec b$, uniquely given by
requesting: $\big[\partial_1y,~\partial_2 y,~ \vec b\big]\in
SO(3)\bar{\mathcal{G}}^{1/2}$ on $\omega$. 
The family of energies obtained in this manner is
parametrised by all matrix fields $S\in \mathcal{C}^{\infty}(\bar\omega,
\mathbb{R}^{2\times 2}_{\mathrm{sym}})$ and $T\in\mathcal{C}^{\infty}(\bar\omega,
\mathbb{R}^{3\times 3}_{\mathrm{sym, pos}})$, namely:
$\mathcal{I}_2^{T,S}(y)=\frac{1}{24}\|((\nabla y)^{\trsp}\nabla\vec
b(y))_{\sym}-S\|_{\mathcal{Q}_2}^2$ defined on the set
$\mathcal{Y}_{T_{2\times 2}}$, where one interprets $T$ as the
leading order prestrain $\bar{\mathcal{G}}$ and $S$ as its first
order correction $\frac{1}{2}\partial_3G(x',0)_{2\times 2}$.

The energy $\mathcal{I}_2$ measures 
the bending quantity $Tensor_2$ which is linear in $x_3$, resulting
in its reduction to the single nonlinear bending term, that equals the
difference of the curvature form $\big((\nabla y)^{\trsp}\nabla \vec 
b\big)_{\sym}$ from the preferred curvature
$\frac{1}{2}\partial_3G(x',0)_{2\times 2}$. The same
energy has been derived in \cite{LP11, BLS} under the assumption that
$G$ is independent of $x_3$ and in \cite{KS14} for a
general manifold $(M^n, g)$ with any codimension submanifold $(N^k,
g_{\mid N})$ replacing the midplate $\omega\times \{0\}$. Since our
derivation of $\mathcal{I}_2$ is a particular case of the result in
case (\ref{O}), we still state it here for completeness. 

In section \ref{conditions_flateness} we identify the necessary and sufficient conditions for
$\min\mathcal{I}_2=0$ (when $\omega$ is simply connected), in terms of the vanishing of the Riemann
curvatures $R_{1212}, R_{1213}, R_{1223}$ of $G$ at $x_3=0$. In this
case, it follows that $\inf \mathcal{E}^h\leq Ch^4$. For the discussed
case (\ref{NO}), the recent work \cite{MS18} generalized the same
statements for arbitrary dimension and codimension.

\subsubsection{Von K\'arm\'an scaling regime}
In section \ref{4} we derive the $\Gamma$-limit of 
$\frac{1}{h^4}\mathcal{E}^h$, which is given by:
$$\mathcal{I}_4(V,\mathbb{S}) = \frac{1}{2}\big\|Tensor_4\big\|_{\mathcal{Q}_2}^2,$$
defined on the spaces of: finite strains
$\mathscr{S}_{y_0}=\{\mathbb{S}=\lim_{n\to\infty, L^2}\big((\nabla
y_0)^{\trsp}\nabla  w_n\big)_{\sym}; ~ w_n\in W^{1,2}(\omega,
\mathbb{R}^3)\}$ and first order infinitesimal
  isometries $\mathscr{V}_{y_0}=\{V\in W^{2,2}(\omega,
\mathbb{R}^3); ~ \big((\nabla y_0)^{\trsp}\nabla V\big)_{\sym} =0\}$
on the deformed midplate $y_0(\omega)\subset\R^3$. Here, $y_0$ is the unique smooth isometric
  immersion of $\bar{\mathcal{G}}_{2\times 2}$ for which
$\mathcal{I}_2(y_0)=0$; recall that it generates the corresponding Cosserat's vector $\vec b_0$. 

The expression in $Tensor_4$ is quite
complicated but it has the structure of a quadratic polynomial in
$x_3$. A key tool for identifying this expression, also in the general case (\ref{O}),
involves the subspaces $\{\mathbb{E}_n\subset \mathbb{E}\}_{n\geq 1}$ in
(\ref{poly_space}), consisting of the tensorial polynomials in $x_3$ of order
$n$. The bases of $\{\mathbb{E}_n\}$  are then naturally given in terms of
the Legendre polynomials $\{p_n\}_{n\geq 0}$ on $(-\frac{1}{2},
\frac{1}{2})$. Since $Tensor_4\in \mathbb{E}_2$, we write the decomposition:
$$Tensor_4 = p_0(x_3) Stretching_4 + p_1(x_3) Bending_4 + p_2(x_3)Curvature_4,$$
which, as shown in section \ref{sec77}, results in:
\begin{equation*}
\begin{split}
\mathcal{I}_4(V,\mathbb{S}) & = \frac{1}{2}\Big( \big\|Stretching_4\big\|_{\mathcal{Q}_2}^2+
\big\|Bending_4\big\|_{\mathcal{Q}_2}^2 + \big\|Curvature_4\big\|_{\mathcal{Q}_2}^2\Big)\\ & 
= \frac{1}{2} \big\|\mathbb{S} +\frac{1}{2}(\nabla V)^{\trsp}\nabla V +
\frac{1}{24}(\nabla\vec b_0)^{\trsp}\nabla \vec b_0 -
\frac{1}{48}\partial_{33}G(x',0)_{2\times 2}\big\|_{\mathcal{Q}_2}^2\\
& \qquad + \frac{1}{24}\big\|\big[\langle \nabla_i\nabla_jV,\vec
b_0\rangle\big]_{i,j=1,2}\big\|_{\mathcal{Q}_2}^2 +
\frac{1}{1440}\big\|\big[R_{i3j3}(x',0)\big]_{i,j=1,2}\big\|_{\mathcal{Q}_2}^2 .
\end{split}
\end{equation*}
Above, $\nabla_i$ denotes the covariant differentiation with respect to
the metric $\bar{\mathcal{G}}$ and $R_{i3j3}$ are the potentially
non-zero curvatures of $G$ on $\omega$ at $x_3=0$.

The family of energies obtained in this manner is
parametrised by all quadruples: vector fields  $y_0,\vec
b_0\in\mathcal{C}^\infty(\bar\omega,\mathbb{R}^3)$ satisfying
$\det\big[\partial_1y_0,~\partial_2y_0, ~ \vec b_0\big]>0$, matrix
fields $T\in\mathcal{C}^\infty(\bar\omega, \mathbb{R}_{\sym}^{2\times
  2})$, and numbers $r\in\mathbb{R}$ in the range (the left parentheses
in the last interval below may be open or closed):
\begin{equation}\label{5}
\begin{split}
r\in\frac{2}{5}\Big\{\|T-\big((\nabla y_0)^{\trsp}\nabla \vec
d_0\big)_{\sym}\|_{\mathcal{Q}_2}^2; ~ \vec d_0\in
\mathcal{C}^\infty(\bar\omega, \mathbb{R}_{\sym}^{2\times 2}) \Big\}
=\Big[ \frac{2}{5}\mbox{dist}^2_{\mathcal{Q}_2}(T, \mathscr{S}_{y_0}), +\infty\Big).
\end{split}
\end{equation}
The functionals are then:
$\mathcal{I}^{y_0, \vec b_0, T, r}_4(V,\mathbb{S})=  \frac{1}{2}\|\mathbb{S}+\frac{1}{2}(\nabla
V)^{\trsp}\nabla V - T\|_{\mathcal{Q}_2}^2 +
\frac{1}{24}\|\big[\langle\nabla_i\nabla_jV, \vec
b_0\rangle\big]_{ij}\|_{\mathcal{Q}_2}^2 + r,$ defined on the linear space $\mathcal{V}_{y_0}\times
\mathscr{S}_{y_0}$.  Particular cases where the range of $r$ may be identified are:
\begin{itemize}
\item[(i)]  $y_0=id_2$. Then $\mathscr{S}_{y_0}= \{\mathbb{S}\in
L^2(\omega, \mathbb{R}^{2\times 2}_{\sym});
~\mbox{curl}^{\trsp}\mbox{curl}\,\mathbb{S}=0\}$ and the range of $r$ is
defined by the appropriate norm of: $\mbox{curl}^{\trsp}\mbox{curl}\, T$.

\item[(ii)] Gauss curvature $\kappa((\nabla y_0)^{\trsp}\nabla y_0)>0$ in $\bar
\omega$. Then in \cite{lemopa} it is shown that $\mathscr{S}_{y_0}=
L^2(\omega, \mathbb{R}^{2\times 2}_{\sym})$. The range of possible $r$
(for any $T$) is then: $[0, +\infty)$.
\end{itemize}

When $y_0=id_2$ (which occurs automatically when $\bar{\mathcal{G}}=Id_3$), then $\vec
b_0=e_3$ and the first two terms in $\mathcal{I}_4$ reduce to the stretching and
the linear bending contents of the classical von K\'arm\'an energy. The third
term is purely metric-related and measures the non-immersability of $G$
relative to the present quartic scaling. These findings generalize the
results of \cite{BLS} valid for $x_3$-independent $G$ in
(\ref{NO}). We also point out that, following the same general
principle in the $h^2$- scaling regime, one may readily decompose:
$$Tensor_2 = p_0(x_3) Stretching_2 + p_1(x_3) Bending_2;$$ 
since $Tensor_2$ is already a multiple of $x_3$, then
$Stretching_2=0$ in the ultimate form of $\mathcal{I}_2$.

It is not hard to deduce (see section \ref{sec8h4}) that the necessary
and sufficient conditions for having $\min \mathcal{I}_4=0$ are
precisely that $R_{ijkl}\equiv 0$ on $\omega\times \{0\}$, for all
$i,j,k,l=1\ldots 3$. In section \ref{example_conformal} we then
analyze the conformal non-oscillatory metric
$G=e^{2\phi(x_3)}Id_3$ and show that different orders of vanishing of
$\phi$ at $x_3=0$ correspond to different even orders of scaling of
$\mathcal{E}^h$ as $h\to 0$:
$$\phi^{(k)}(0)=0\quad \mbox{for } k=1\ldots n-1\quad \mbox{ and
}\quad  \phi^{(n)}(0)\neq 0 \quad \Leftrightarrow \quad 
ch^{2n}\leq \inf\mathcal{E}^h\leq Ch^{2n}$$
with the lower bound: $\inf \mathcal{E}^h\geq c_nh^n\big\| \big[\partial_3^{(n-2)}
R_{i3j3}(x',0)\big]_{i,j=1,2}\big\|_{\mathcal{Q}_2}^2$. These findings
are consistent with and pave the way for the follow-up paper
\cite{Lewicka_last}, which completes the scaling analysis of
$\mathcal{E}^h$ in the non-oscillatory case, including the derivation of
$\Gamma$-limits of $h^{-2n}\mathcal{E}^h$ for all
$n\geq 1$, and proving the energy quantisation in the sense that
the even powers $2n$ of $h$ are indeed the only possible ones (all of
them are also attained).

\subsection{New results of this work: Singular energies in the oscillatory case} 

We show that the analysis in the general case (\ref{O})
may follow a similar procedure, where we first project the limiting
quantity $Tensor^O$ on an appropriate polynomial space and then
decompose the projection along the respective Legendre basis. For the
$\Gamma$-limit of $\frac{1}{h^2}\mathcal{E}^h$ in section \ref{2}, we show that:
\begin{equation*}
\begin{split}
Tensor_2^O & =  x_3\big((\nabla y)^{\trsp}\nabla\vec
b\big)_{\sym}-\frac{1}{2}(\mathcal{G}_1)_{2\times 2} =
p_0(x_3) Stretching_2^O + p_1(x_3) Bending_2^O +
Excess_2, \\ &  \mbox{with }\, Excess_2 = Tensor _2^O - \mathbb{P}_1 (Tensor_2^O),
\end{split}
\end{equation*}
where $\mathbb{P}_1$ denotes the orthogonal projection on
$\mathbb{E}_1$. Consequently:
\begin{equation*}
\begin{split}
\mathcal{I}_2^O(y) & =  \frac{1}{2}\Big(\big\|Stretching_2^O\big\|_{\mathcal{Q}_2}^2+
\big\|Bending_2^O\big\|_{\mathcal{Q}_2}^2 + \big\|Excess_2\big\|_{\mathcal{Q}_2}^2\Big)\\ & =
\frac{1}{24}\big\|\big((\nabla y)^{\trsp}\nabla\vec
b\big)_{\sym}-\frac{1}{2}(\bar{\mathcal{G}}_1)_{2\times 2} \big\|_{\mathcal{Q}_2}^2+
\frac{1}{8}\dist^2_{\mathcal{Q}_2}\big((\mathcal{G}_1)_{2\times 2}, \mathbb{E}_1\big),
\end{split}
\end{equation*}
where again $Stretching_2^O=0$ in view of the assumed 
$\int_{-1/2}^{1/2}\mathcal{G}_1 \,\rmd x_3= 0$. For the same reason:
$$Excess_2=-\frac{1}{2}\Big( (\mathcal{G}_1)_{2\times
  2}-\mathbb{P}_1\big((\mathcal{G}_1)_{2\times 2}\big)\Big) = -\frac{1}{2}\Big( (\mathcal{G}_1)_{2\times
  2}-12\int_{-1/2}^{1/2}x_3(\mathcal{G}_1)_{2\times 2} \,\rmd x_3\Big) $$ 
and also: $\mathbb{P}_1\big((\mathcal{G}_1)_{2\times 2}\big)=x_3
(\bar{\mathcal{G}}_1)_{2\times 2}$ with $(\bar{\mathcal{G}}_1)_{2\times 2}$
defined in (\ref{EO}). The limiting oscillatory energy
$\mathcal{I}_2^O$ consists thus of the bending term that coincides with
$\mathcal{I}_2$ for the effective metric $\bar G$, plus the purely
metric-related excess term. A special case of $\mathcal{I}_2^O$ when $\bar{\mathcal{G}}=Id_3$
and without analyzing the excess term, has been derived in \cite{ALL},
following the case with $G^h=Id_3+h\mathcal{G}_1(\frac{x_3}{h})$
considered in \cite{Sch}. An excess term has also been
present in the work \cite{KO} on rods with misfit; we do not attempt
to compare our results with studies of dimension reduction for
rods; the literature there is abundant.

It is easy to observe that:  $\min\mathcal{I}_2^O=0$ if and only if $(\mathcal{G}_1)_{2\times 2}=x_3
(\bar{\mathcal{G}}_1)_{2\times 2}$ on $\omega\times \{0\}$. We
show in section \ref{sec5} that this automatically implies:
$\inf\mathcal{E}^h\leq Ch^4$. The $\Gamma$-limit of
$\frac{1}{h^4}\mathcal{E}^h$ is further derived in sections \ref{4} and
\ref{sec77}, by considering the decomposition:
\begin{equation*}
\begin{split}
Tensor_4^O & = p_0(x_3) Stretching_4^O + p_1(x_3) Bending_4^O +
p_2(x_3)Curvature_4^O + Excess_4, \\ &  \mbox{with }\, Excess_4 =
Tensor _4^O - \mathbb{P}_2 (Tensor_4^O). 
\end{split}
\end{equation*}
It follows that:
\begin{equation*}
\begin{split}
\mathcal{I}_4^O(V,\mathbb{S}) & = \frac{1}{2}\Big( \big\|Stretching_4^O\big\|_{\mathcal{Q}_2}^2+
\big\|Bending_4^O\big\|_{\mathcal{Q}_2}^2 +
\big\|Curvature_4^O\big\|_{\mathcal{Q}_2}^2+ \big\|Excess_4\big\|_{\mathcal{Q}_2}^2\Big)\\ & 
= \frac{1}{2} \big\|\mathbb{S} +\frac{1}{2}(\nabla V)^{\trsp}\nabla V
+ B_0 \big\|_{\mathcal{Q}_2}^2 + \frac{1}{24}\big\|\big[\langle \nabla_i\nabla_jV,\vec
b_0\rangle\big]_{i,j=1,2} + B_1\big\|_{\mathcal{Q}_2}^2 \\ & \qquad +
\frac{1}{1440}\big\|\big[R_{i3j3}(x',0)\big]_{i,j=1,2}\big\|_{\mathcal{Q}_2}^2 \\
& \qquad  +\frac{1}{2}\dist^2_{\mathcal{Q}_2}\Big(\frac{1}{4}(\mathcal{G}_2)_{2\times 2}
-\int_0^{x_3}\big[\nabla_i\big((\mathcal{G}_1e_3)
-\frac{1}{2}(\mathcal{G}_1)_{33}e_3\big)\big]_{i,j=1,2, \sym}\,\rmd t, \mathbb{E}_2\Big),
\end{split}
\end{equation*}
where $R_{1313},R_{1323},R_{2323}$ are the respective Riemann
curvatures of the effective metric $\bar G$ in (\ref{EO})
at $x_3=0$. The corrections $B_0$ and $B_1$ coincide with the same
expressions written for $\bar G$ under two extra constraints (see
Theorem \ref{Gliminf4-2}), that can be seen as the $h^4$-order
counterparts of the $h^2$-order condition $\int_{-1/2}^{1/2}\mathcal{G}_1
\,\rmd x_3= 0$ assumed throughout. In case these conditions
are valid, the functional $\mathcal{I}_4^O$ is the sum of the
effective stretching, bending and curvature in $\mathcal{I}_4$ for
$\bar G$, plus the additional purely metric-related excess term.

\subsection{New results of this work: coercivity of $\mathcal{I}_2$  and $\mathcal{I}_4$ }
We additionally analyze the derived limiting functionals by identifying their
kernels, when nonempty. In section \ref{sec_coer} we show that the
kernel of $\mathcal{I}_2$ consists of the rigid motions of a
single smooth deformation $y_0$ that solves:
$$(\nabla y_0)^{\trsp}\nabla y_0=\bar{\mathcal{G}}_{2\times 2}, \qquad 
\big((\nabla y_0)^{\trsp}\nabla\vec
b_0\big)_{\sym}=\frac{1}{2}\partial_3G(x',0)_{2\times 2}.$$
Further, $\mathcal{I}_2(y)$ bounds from above the squared distance of
an arbitrary $W^{2,2}$ isometric immersion $y$ of the midplate metric
$\bar{\mathcal{G}}_{2\times 2}$, from the indicated kernel of $\mathcal{I}_2$.

In section \ref{sec8h4} we consider the case of $\mathcal{I}_4$. We
first identify (see Theorem \ref{kernel_vK})
the zero-energy displacement-strain couples $(V, \mathbb{S})$. In
particular, we show that the minimizing displacements are exactly the
linearised rigid motions of the referential $y_0$. We then prove that
the bending term in $\mathcal{I}_4$, which is solely a function of
$V$, bounds from above the squared distance of an arbitrary $W^{2,2}$ displacement
obeying $\big((\nabla y_0)^{\trsp}\nabla V\big)_{\sym} = 0$, from the
indicated minimizing set in $V$. On the other hand, the full
coercivity result involving minimization in  both $V$ and $\mathbb{S}$ is
false. In Remark \ref{nocoer} we exhibit an example in the
setting of the classical von K\'arm\'an functional, where
$\mathcal{I}_4(V_n, \mathbb{S}_n)\to 0$ as $n\to\infty$, but the
distance of $(V_n, \mathbb{S}_n)$ from the kernel of $\mathcal{I}_4$ remains uniformly
bounded away from $0$. We note that this lack of coercivity is not prevented
by the fact that the kernel is finite dimensional.

\subsection{Other related works} 
Recently, there has been a sustained interest in studying shape formation driven by internal
prestrain, through the experimental, modelling via formal methods,
numerics, and analytical arguments \cite{sharon, JM,  ESK1,
  KM14}. General results have been derived in the abstract
setting of Riemannian manifolds: in \cite{KS14, KM14} 
$\Gamma$-convergence statements were proved for any dimension ambient
manifold and codimension midplate, in the scaling regimes
$\mathcal{O}(h^2)$ and $\mathcal{O}(1)$, respectively. In a 
work parallel to ours \cite{MS18}, the authors analyze scaling orders
$o(h^2)$, $\mathcal{O}(h^4)$ and $o(h^4)$, extending 
condition (\ref{I-II2}), Lemma \ref{higher_order_scaling_sequence}
in (\ref{NO}) case, and condition (\ref{y11}) to
arbitrary manifolds. Although they do not identify the $\Gamma$-limits
of the rescaled energies $\mathcal{E}^h$, they are able to provide the
revealing lower bounds for $\inf \mathcal{E}^h$ in terms of the appropriate curvatures.

Higher energy scalings $\inf\mathcal{E}^h\sim h^\beta$ than the ones
analyzed in the present paper may result from the interaction of the metric with boundary 
conditions or external forces, leading to the ``wrinkling-like''
effects. Indeed, our setting pertains to the ``no wrinkling'' regime where
$\beta \geq 2$ and the prestrain metric admits a $W^{2,2}$ isometric
immersion. While the systematic description of the singular limits at scalings $\beta<2$ is
not yet available, the following studies are examples of the variety
of emerging patterns. In \cite{JS, BCDM, BCDM2}, energies leading to the buckling- or
compression- driven {\em blistering}  in a thin film breaking
away from its substrate and under
clamped boundary conditions, are discussed ($\beta=1)$. Paper \cite{KoBe}
  displays dependence of the energy minimization  
on boundary conditions and classes of admissible deformations, while
\cite{Bella} discusses coarsening of {\em folds}
in  hanging drapes, where the energy identifies the number of generations of coarsening. 
In \cite{Tobasco}, {\em wrinkling} patterns are obtained, reproducing the
experimental observations when a thin shell is placed on
a liquid bath ($\beta=1$), while \cite{Obrien}  analyses wrinkling
in the center of a stretched and twisted ribbon ($\beta=4/3$).
In \cite{Maggi, Venka}, energy levels of the {\em origami patterns} in
paper crumpling are studied ($\beta=5/3$). See also \cite{Olber1, Olber2, Olber3} for an
analysis of the {\em conical singularities} 
($\mathcal{E}^h\sim h^2\log(1/h)$).  
We remark that the mentioned papers do not address the dimension
reduction, but rather analyze the chosen actual configuration of the prestrained sheet. 
Closely related is also the literature on shape selection in non-Euclidean
plates, exhibiting hierarchical {\em buckling patterns} in zero-strain
plates ($\beta=2$), where the complex morphology is due to the
non-smooth energy minimization \cite{Ge1, Ge2, Ge3}.

Various geometrically nonlinear thin plate theories have been used to
analyze the self-similar structures with metric asymptotically
flat at infinity \cite{1b}, a disk with edge-localized growth \cite{ESK1}, the shape of a
long leaf \cite{18b}, or torn plastic sheets \cite{22a}. In
\cite{DB, DCB} a variant of the F\"oppl-von K\'arm\'an equilibrium
equations has been formally derived from finite incompressible elasticity, via the
multiplicative decomposition of deformation gradient \cite{RHM}
similar to ours. See also models related to wrinkling of paper in
areas with high ink or paint coverage, grass
blades, sympatelous (meaning “with fused petals”) flowers and studies
of movement of micro-organisms that share certain characteristics of animals and
plants, called the euglenids \cite {DCB, BMT, D}.   
The forward and inverse problems in the self-folding of thin
sheets of patterned hydrogel bilayers are discussed in \cite{ADLL}.

On the frontiers of experimental modeling of shape formation,
we mention the {\em halftone gel lithography} method for polymeric materials that
can swell by imbibing fluids \cite{9,10,11, 12}. By blocking the
ability of portions of plate to swell or causing them to swell inhomogeneously, it is possible to have
the plate assume a variety of deformed shapes. Even more sophisticated
techniques of {\em biomimetic 4d printing} allow for engineering of the
3d shape-morphing systems that mimic nastic plant motions where organs
such as tendrils, leaves and flowers respond to the environmental
stimuli \cite{biomi}. Optimal control in such systems has been studied
in \cite{JM}, see also \cite{ALP}.

In \cite{lemapa2, lemapa2new, LOP}, derivations similar to the results
of the present paper were carried out under a different assumption on the 
asymptotic behavior of the prestrain, which also implied
energy scaling $h^\beta$ in non-even regimes of $\beta>2$. In
\cite{lemapa2} it was shown that the resulting Euler-Lagrange
equations of the residual energy are identical to those describing the effects of growth
in elastic plates \cite{18b}. In \cite{LOP}, a model with a
Monge-Amp\`ere constraint was derived and analysed from various aspects. 

We finally mention the paper \cite{Lewicka_last},
completed after the submission of the present article, which resolves the scaling analysis of $\mathcal{E}^h$
together with the derivation of $\Gamma$-limits of $h^{-2n}\mathcal{E}^h$, for all
$n\geq 1$. There, we identify equivalent conditions for the validity
of the scalings $h^{2n}$ in terms of vanishing of the
Riemann curvatures,  for an arbitrary non-oscillatory metric $G$, up
to appropriate orders and in terms of the matched isometry  
expansions. We also establish the asymptotic behaviour of the
minimizing immersions of $G$ as $h\to 0$ and prove 
the energy quantisation, in the sense that
the even powers $2n$ of $h$ are indeed the only possible ones (all of them are also attained).

\subsection{Notation} 
Given a matrix $F\in\mathbb{R}^{n\times n}$, we denote 
its transpose by $F^{\trsp}$ and its symmetric part by $F_{\mathrm{sym}} =
\frac{1}{2}(F + F^{\trsp})$. The space of symmetric $n\times n$ matrices is
denoted by $\R^{n\times n}_{\mathrm{sym}}$, whereas $\R^{n\times
  n}_{\mathrm{sym, pos}}$ stands for the space of symmetric, positive
definite  $n\times n$ matrices.
By $SO(n) = \{R\in\mathbb{R}^{n\times n}; ~ R^{\trsp} = R^{-1} \mbox{ and }
  \det R=1\}$ we mean the group of special rotations; its tangent
  space at $Id_n$ consists of skew-symmetric matrices:  $T_{Id_n}SO(n)=so(n) =
\{F\in \mathbb{R}^{n\times n}; ~ F_{\mathrm{sym}}=0\}$.
We use the matrix norm $|F| = (\mbox{trace}(F^{\trsp} F))^{1/2}$, which
is induced by the inner product $\langle F_1 : F_2\rangle =
\mbox{trace}(F_1^{\trsp} F_2)$. The $2 \times 2$ principal minor of 
$F\in\mathbb{R}^{3\times 3}$ is denoted by $F_{2\times 2}$.
Conversely, for a given $F_{2\times 2} \in \mathbb{R}^{2\times 2}$, the $3 \times 3$ matrix with
principal minor equal $F_{2\times 2}$ and all other entries equal to
$0$, is denoted by $F_{2\times 2}^*$. Unless specified otherwise, all limits are
taken as the thickness parameter $h$ vanishes: $h\to 0$. By $C$ we denote any
universal positive constant, independent of $h$. We use the Einstein
summation convention over repeated lower and upper indices running from $1$ to $3$.

\subsection{Acknowledgments}
M.L. was supported by the NSF grant DMS-1613153. D.L. acknowledges the
support of V. Agostiniani and discussions related to $h^2$-scaling regime. 
We are grateful to Y. Grabovsky for discussions on Remark
\ref{nocoer} and to R.V. Kohn and C. Maor for pointing us to some relevant literature.

\section{Compactness and $ \Gamma$-limit under $Ch^2$ energy bound}\label{2}

Define the matrix fields $\bar A\in \mathcal{C}^\infty(\bar\omega, \mathbb{R}^{3\times
  3}_{\sym,\mathrm{pos}})$ and $A^h, A_1, A_2\in \mathcal{C}^\infty(\bar\Omega, \mathbb{R}^{3\times
  3}_{\sym})$ so that, uniformly for all $(x', x_3)\in\Omega^h$ there holds:
$$A^h(x', x_3) = \mathcal{G}^h(x', x_3)^{1/2} = \bar A(x') + h A_1(x', \frac{x_3}{h}) +
\frac{h^2}{2}A_2(x', \frac{x_3}{h}) + o(h^2).$$
Equivalently, $\bar A, A_1, A_2$ solve the following system of equations:
\begin{equation}\label{Adef}
\bar A^2=\bar{\mathcal{G}}, \qquad 2(\bar A
A_1)_{\sym}=\mathcal{G}_1, \qquad 2A_1^2 + 2(\bar A
A_2)_{\sym}=\mathcal{G}_2\qquad\mbox{in }\; \bar\Omega.
\end{equation}

Under the assumption (\ref{O}), condition (iii) on $W$ easily implies:
\begin{equation*}
\begin{split}
\frac{1}{h}\int_{\Omega^h}\mathrm{dist}^2\big(\nabla u^h(x) \bar A(x')^{-1}, \SO(3)\big)
\,\rmd x & \leq \frac{C}{h}\int_{\Omega^h}\mathrm{dist}^2\big(\nabla u^h(x) A^h(x)^{-1},
\SO(3)\big) +  h^2\,\rmd x\\ &
\leq C\big(\mathcal{E}^h(u^h) + h^2\big). 
\end{split}
\end{equation*}
Consequently, the results of \cite{BLS} automatically give the following
compactness properties of any sequence of deformations with the
quadratic energy scaling:

\begin{thm} \label{compactness_thm}
Assume (\ref{O}). Let $\{u^h\in \WW^{1,2}(\Omega^h,\R^3)\}_{h\to 0}$
be a sequence of deformations satisfying:
\begin{equation}\label{Ch2}
\mathcal{E}^h(u^h)\leq Ch^2.
\end{equation}
Then the following properties hold for the rescalings
$y^h\in \WW^{1,2}(\Omega, \mathbb{R}^3)$ given by:
$$y^h(x', x_3) = u^h(x', hx_3) - \fint_{\Omega^h}u^h\,\rmd x.$$
\begin{itemize}
\item [(i)] There exist $y\in \WW^{2,2}(\omega, \R^3)$ and $\vec b\in
  \WW^{1,2}\cap\LL^{\infty}(\omega,\R^3)$ such that, up to a subsequence:
$$ y^h\to y\quad  \mbox{strongly in }\WW^{1,2}(\Omega, \R^3) \quad \mbox {and} \quad 
\frac{1}{h}\partial_3 y^h\to \vec b \quad  \mbox{strongly in
}\LL^{2}(\Omega, \R^3),\quad \mbox{ as }\,h\to 0.$$
\item [(ii)]The limit deformation $y$ realizes the reduced midplate metric on $\omega$:
\begin{equation}\label{iso}
(\nabla y)^{\trsp}\nabla y=\bar{\mathcal{G}}_{2\times 2}.
\end{equation}
In particular $\partial_1y$, $\partial_2y\in L^\infty(\omega,
\mathbb{R}^{3})$ and the unit normal
${\vec \nu=\frac{\partial_1y\times \partial_2y}{|\partial_1y\times \partial_2y|}}$
to the surface $y(\omega)$ satisfies: $\vec \nu\in \WW^{1,2}\cap\LL^{\infty}(\omega,\R^3)$.
The limit displacement $\vec b$ is the Cosserat field defined via:
\begin{equation} \label{cosserat}
\vec b=(\nabla y) (\bar{\mathcal{G}}_{2\times 2})^{-1}\left[\begin{matrix}\bar{\mathcal{G}}_{13}\\
\bar{\mathcal{G}}_{23}\end{matrix}\right] + \frac{\sqrt{\det \bar{\mathcal{
G}}}}{\sqrt{\det\bar{\mathcal{G}}_{2\times 2}}}\,\vec \nu.
\end{equation}
\end{itemize}
\end{thm}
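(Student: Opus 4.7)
The plan is to build on the rigidity-type inequality
\[
\frac{1}{h}\int_{\Omega^h}\dist^2\bigl(\nabla u^h(x)\bar A(x')^{-1}, \SO(3)\bigr)\,\rmd x \leq Ch^2
\]
derived just above the theorem, and follow the geometric rigidity compactness scheme implemented in \cite{BLS}. The essential point is that $\bar A$ depends on $x'$ only, so the entire transversal dependence coming from the oscillatory metric in (\ref{O}) has been absorbed into the $Ch^2$ error, and the remaining task is formally the same as in the $x_3$-independent prestrain case of \cite{BLS}.

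First, I would cover $\omega$ by squares of side comparable to $h$ and apply the geometric rigidity theorem on each associated transversal slab. Since $\bar A$ is smooth on $\bar\omega$, a standard patching argument produces a rotation field $R^h \in W^{1,2}(\omega, \SO(3))$ satisfying
\[
\int_{\Omega^h}|\nabla u^h - R^h\bar A|^2\,\rmd x \leq Ch^4, \qquad \int_\omega|\nabla R^h|^2\,\rmd x' \leq C.
\]
Extracting a subsequence, $R^h\to R$ strongly in $L^2$ and weakly in $W^{1,2}(\omega, \RRR)$, with $R(x')\in \SO(3)$ a.e. Rescaling the transversal variable converts the above estimate into
\[
\int_\Omega\Big|\bigl[\partial_1 y^h,\,\partial_2 y^h,\,\tfrac{1}{h}\partial_3 y^h\bigr] - R^h\bar A\Big|^2\,\rmd x \leq Ch^2.
\]
Hence $\partial_\alpha y^h\to (R\bar A)e_\alpha$ and $\tfrac{1}{h}\partial_3 y^h\to \vec b:= R\bar A e_3$ strongly in $L^2(\Omega, \R^3)$. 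Since $\partial_3 y^h = O(h)$ in $L^2$ and $y^h$ has zero mean, Poincar\'e's inequality upgrades this to $y^h\to y$ strongly in $W^{1,2}(\Omega,\R^3)$ with $y$ independent of $x_3$ and $\partial_\alpha y = (R\bar A)e_\alpha$. The regularity $y\in W^{2,2}(\omega,\R^3)$ and $\vec b\in W^{1,2}\cap L^\infty(\omega,\R^3)$ then follows from $R\in W^{1,2}(\omega,\SO(3))$ and the smoothness and boundedness of $\bar A$ on $\bar\omega$.

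For (ii), the identity $(R\bar A)^{\trsp}(R\bar A) = \bar A^2 = \bar{\mathcal{G}}$ restricted to the upper-left $2\times 2$ block is exactly (\ref{iso}). To derive (\ref{cosserat}), decompose $\vec b = R\bar A e_3$ relative to the frame $\{\partial_1 y,\partial_2 y, \vec \nu\}$. The tangential coefficients $[c^1;c^2]$ satisfy $(\bar{\mathcal{G}}_{2\times 2})[c^1;c^2] = [\langle \vec b,\partial_1 y\rangle;\langle \vec b,\partial_2 y\rangle] = [\bar{\mathcal{G}}_{13};\bar{\mathcal{G}}_{23}]$, producing the first summand of (\ref{cosserat}). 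The squared normal component equals $\bar{\mathcal{G}}_{33} - [\bar{\mathcal{G}}_{13},\bar{\mathcal{G}}_{23}](\bar{\mathcal{G}}_{2\times 2})^{-1}[\bar{\mathcal{G}}_{13};\bar{\mathcal{G}}_{23}]^{\trsp} = \det\bar{\mathcal{G}}/\det\bar{\mathcal{G}}_{2\times 2}$ by the Schur complement identity, and its sign is positive since $\det(R\bar A) = \det\bar A > 0$; this yields the second summand of (\ref{cosserat}). The only nontrivial step is the covering/patching argument in Step~1, but it is routine given the smoothness and $x_3$-independence of $\bar A$, and is executed in detail in \cite{BLS}.
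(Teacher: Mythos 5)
Your proposal is correct and follows the same route as the paper: derive the preliminary rigidity-type inequality with the $x_3$-independent $\bar A$ from the energy bound with $A^h$, and then invoke the compactness scheme of \cite{BLS} (itself a FJM-type argument). The paper in fact only states that the preliminary inequality holds and then cites \cite{BLS}; you have spelled out what that citation delivers, which is the right content. One minor slip: after patching, the un-rescaled estimate should read $\int_{\Omega^h}|\nabla u^h - R^h\bar A|^2\,\rmd x \leq Ch^3$ (consistent with the paper's (\ref{Rh}), namely $\tfrac{1}{h}\int_{\Omega^h}|\nabla u^h\bar A^{-1}-R^h|^2\,\rmd x \leq Ch^2$), not $Ch^4$; your subsequent rescaled bound $\int_\Omega|\cdots|^2\,\rmd x\leq Ch^2$ is nonetheless correct, so this is just a typo. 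The identification of $y$, of the metric identity (\ref{iso}), and the decomposition of $\vec b = R\bar A e_3$ in the moving frame (including the Schur-complement computation and the sign argument via $\det(R\bar A)>0$) are all correct and match what \cite{BLS} does.
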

Recall that the results in \cite{BLS} also give:
\begin{equation}\label{old_Kirch}
\liminf_{h\to 0}\frac{1}{h^2}\frac{1}{h}\int_{\Omega^h}W\big(\nabla
u^h\bar{\mathcal{G}}^{-1/2}\big)\geq \frac{1}{24} 
\int_\omega\mathcal{Q}_2\Big(x', \nabla y(x')^{\trsp}\nabla \vec b(x')\Big) \,\rmd x',
\end{equation}
with the curvature integrand $(\nabla y)^{\trsp}\nabla \vec b$ quantified by the quadratic forms:
\begin{equation}\label{Qform}
\begin{split}
& \mathcal{Q}_2(x', F_{2\times 2}) = \min\left\{
  \mathcal{Q}_3\big(\bar  A(x')^{-1}\tilde F \bar A(x')^{-1}\big);
  ~ \tilde F\in\mathbb{R}^{3\times 3} \mbox{ with }\tilde F_{2\times 2} = F_{2\times 2}\right\},\\
& \mathcal{Q}_3(F) = D^2 W(Id_3)(F,F).
\end{split}
\end{equation}
The form $\mathcal{Q}_3$ is defined for  all $F\in\mathbb{R}^{3\times 3}$, 
while $\mathcal{Q}_2(x', \cdot)$ are defined on $F_{2\times
  2}\in\mathbb{R}^{2\times 2}$. Both forms $\mathcal{Q}_3$ and all $\mathcal{Q}_2$ are nonnegative
definite and depend only on the  symmetric parts of their arguments,
in view of the assumptions on the elastic energy density $W$.
Clearly, the minimization problem in (\ref{Qform}) has a unique
solution among symmetric matrices $\tilde F$ which for each $x'\in
\omega$ is described by the linear function $F_{2\times 2}\mapsto c(x', F_{2\times
  2})\in \mathbb{R}^3$ in:
\begin{equation}\label{cdef}
\mathcal{Q}_2(x', F_{2\times 2}) = \min\Big\{\mathcal{Q}_3\big(\bar
A(x')^{-1} (F_{2\times 2}^* + c\otimes e_3)\bar A(x')^{-1}\big); ~
c\in\mathbb{R}^3\Big\}.
\end{equation}
The energy in the right hand side of (\ref{old_Kirch}) is a Kirchhoff-like fully nonlinear
bending, which in case of $\bar Ae_3 = e_3$ reduces to the classical
bending content relative to the second fundamental form $(\nabla y)^{\trsp}\nabla
\vec b= (\nabla y)^{\trsp}\nabla \vec \nu$ on the deformed surface $y(\omega)$.

\medskip

In the present setting, we start with an observation about projections on
polynomial subspaces of $L^2$. Consider the following Hilbert space, with its norm:
\begin{equation}\label{poly_norm} 
\mathbb{E}\doteq \big(L^2(\Omega,
\mathbb{R}^{2\times 2}_{\sym}), \|\cdot\|_{\mathcal{Q}_2}\big), \qquad
\|F\|_{\mathcal{Q}_2} = \Big(\int_{\Omega}\mathcal{Q}_2(x', F(x))
\,\rmd x\Big)^{1/2},
\end{equation}
associated to the scalar product (with obvious notation):
$$\langle F_1, F_2\rangle_{\mathcal{Q}_2} =
\int_{\Omega}\mathcal{L}_{2,x'} \big(F_1(x), F_2(x)\big) \,\rmd x.$$ 
We define $\mathbb{P}_1$ and $\mathbb{P}_2$, respectively,
as the orthogonal projections onto the following subspaces of $\mathbb{E}$:
\begin{equation}\label{poly_space}
\begin{split}
\mathbb{E}_1 & = \Big\{x_3\mathcal{F}_1(x') +\mathcal{F}_0(x'); ~
\mathcal{F}_1, \mathcal{F}_0\in L^2(\omega, \mathbb{R}^{2\times 2}_{\sym})\Big\},\\
\mathbb{E}_2 & = \Big\{x_3^2 \mathcal{F}_2(x') + x_3\mathcal{F}_1(x') +\mathcal{F}_0(x'); ~
\mathcal{F}_2, \mathcal{F}_1, \mathcal{F}_0\in L^2(\omega, \mathbb{R}^{2\times 2}_{\sym})\Big\},
\end{split}
\end{equation}
obtained by projecting each $F(x', \cdot)$ on
the appropriate polynomial subspaces of $L^2(-1/2, 1/2)$ whose
orthonormal bases consist of the Legendre 
polynomials $\{p_i\}_{i=0}^\infty$. The first three polynomials are:
$$p_0(t) = 1,\qquad p_1(t) = \sqrt{12}t, \qquad p_2(t) = \sqrt{5}\big(6t^2 - \frac{1}{2}\big).$$

\begin{lemma}\label{proj}
For every $F\in\mathbb{E}$, we have:
\begin{equation*}
\begin{split}
\mathbb{P}_1(F) & = 12\Big(\int_{-1/2}^{1/2}x_3F \,\rmd x_3\Big)x_3 + 
\Big(\int_{-1/2}^{1/2}F \,\rmd x_3\Big),\\
\mathbb{P}_2(F) &= \Big(\int_{-1/2}^{1/2}(180x_3^2-15)F \,\rmd x_3 \Big)x_3^2 +
  12\Big(\int_{-1/2}^{1/2}x_3F \,\rmd x_3\Big)x_3 + 
\Big(\int_{-1/2}^{1/2}(-15x_3^2 + \frac{9}{4})F \,\rmd x_3 \Big)
\end{split}
\end{equation*}
Moreover, the distances from spaces $\mathbb{E}_1$ and $\mathbb{E}_2$ are given by: 
\begin{equation*}
\begin{split}
\dist^2_{\mathcal{Q}_2}(F, \mathbb{E}_1) & =
\int_\omega\bigg(\int_{-1/2}^{1/2}\mathcal{Q}_2\big(x', F\big)
\,\rmd x_3 - 12 \mathcal{Q}_2\big(x', \int_{-1/2}^{1/2}x_3 F \,\rmd
x_3 \big) - \mathcal{Q}_2\big(x', \int_{-1/2}^{1/2}F\,\rmd x_3 \big) \bigg)\,\rmd x',\\
\dist^2_{\mathcal{Q}_2}(F, \mathbb{E}_2) & = \int_\omega\bigg(\int_{-1/2}^{1/2}\mathcal{Q}_2\big(x', F\big)
\,\rmd x_3 - 180 \mathcal{Q}_2\big(x', \int_{-1/2}^{1/2}\big(x_3^2-\frac{1}{12}\big) F \,\rmd
x_3 \big) \\ & \qquad \qquad  \qquad \quad
- 12 \mathcal{Q}_2\big(x', \int_{-1/2}^{1/2}x_3 F \,\rmd x_3 \big)
- \mathcal{Q}_2\big(x', \int_{-1/2}^{1/2}F \,\rmd x_3 \big) \bigg)\,\rmd x'.
\end{split}
\end{equation*}
\end{lemma}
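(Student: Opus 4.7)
The plan is to exploit the fact that the inner product on $\mathbb{E}$ factorizes between the $x'$-variable (which sees the form $\mathcal{Q}_2(x', \cdot)$) and the $x_3$-variable (which is integrated against the flat Lebesgue measure on $(-1/2, 1/2)$, with no weight). Indeed, for any $F_1(x', x_3) = \mathcal{F}(x')\, p(x_3)$ and $F_2(x', x_3) = \mathcal{G}(x')\, q(x_3)$ with scalar polynomials $p, q$, one has
$$\langle F_1, F_2\rangle_{\mathcal{Q}_2} = \Big(\int_{-1/2}^{1/2} p(x_3)\, q(x_3)\,\rmd x_3\Big)\int_\omega \mathcal{L}_{2,x'}(\mathcal{F}(x'), \mathcal{G}(x'))\,\rmd x'.$$
This factorization is the whole point: it reduces the projection problem to a one-dimensional polynomial projection in $x_3$, carried out pointwise in $x'$.

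Since $\{p_k\}_{k\geq 0}$ is an orthonormal basis of $L^2(-1/2, 1/2)$ and the subspace $\mathbb{E}_n$ consists precisely of fields of the form $\sum_{k=0}^n \mathcal{F}_k(x')\, p_k(x_3)$, the orthogonal projection onto $\mathbb{E}_n$ is
$$\mathbb{P}_n(F)(x', x_3) = \sum_{k=0}^{n} \Big(\int_{-1/2}^{1/2} p_k(t)\, F(x', t)\,\rmd t\Big)\, p_k(x_3).$$
To verify this, one checks that $F - \mathbb{P}_n(F)$ is $\mathcal{Q}_2$-orthogonal to each generator $\mathcal{G}(x')\, p_k(x_3)$ with $k \leq n$ and $\mathcal{G}\in L^2(\omega, \mathbb{R}^{2\times 2}_{\sym})$: by the factorization above, this orthogonality amounts to the identity $\int (p_j - \delta_{jk})p_k \,\rmd x_3 = 0$, which holds by orthonormality of the Legendre basis. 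Substituting $p_0(t) = 1$, $p_1(t) = \sqrt{12}\, t$, $p_2(t) = \sqrt{5}(6 t^2 - \tfrac{1}{2})$, and gathering powers of $x_3$, yields the stated closed forms for $\mathbb{P}_1$ and $\mathbb{P}_2$; the only slightly nonobvious piece is the constant-in-$x_3$ term of $\mathbb{P}_2$, which arises from combining $\int F\,\rmd t$ with the constant term of the $p_2$-contribution $-\tfrac{5}{2}\int(6t^2 - \tfrac12)F\,\rmd t$ to produce $\int(-15 t^2 + \tfrac{9}{4})F\,\rmd t$.

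For the distance formulas, I use Pythagoras: $\mbox{dist}^2_{\mathcal{Q}_2}(F, \mathbb{E}_n) = \|F\|_{\mathcal{Q}_2}^2 - \|\mathbb{P}_n(F)\|_{\mathcal{Q}_2}^2$. Writing $\mathbb{P}_n(F) = \sum_{k=0}^n \mathcal{F}_k(x')\, p_k(x_3)$, expanding the quadratic form $\mathcal{Q}_2(x', \cdot)$, and using orthonormality of the $p_k$ to kill cross-terms gives
$$\|\mathbb{P}_n(F)\|_{\mathcal{Q}_2}^2 = \sum_{k=0}^n \int_\omega \mathcal{Q}_2(x', \mathcal{F}_k(x'))\,\rmd x'.$$
Substituting $\mathcal{F}_0 = \int F\,\rmd t$, $\mathcal{F}_1 = \sqrt{12}\int tF\,\rmd t$, $\mathcal{F}_2 = 6\sqrt{5}\int(t^2 - \tfrac{1}{12})F\,\rmd t$, and invoking the bilinearity of $\mathcal{Q}_2(x', \cdot)$ to pull out the scalar factors $12$ and $180$, delivers the two stated formulas.

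The only real conceptual point is the factorization and the fact that $\mathcal{Q}_2(x', \cdot)$ does not couple $x'$ with $x_3$; after that, everything is a routine Legendre computation, so I do not anticipate a genuine obstacle. Some care is needed in bookkeeping when expressing the projection coefficients back in the monomial basis $\{1, x_3, x_3^2\}$ rather than in the Legendre basis, but this is purely algebraic.
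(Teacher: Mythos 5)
Your proposal is correct and follows essentially the same route as the paper: expand $\mathbb{P}_n(F)$ in the Legendre basis $\{p_k\}$, use Pythagoras $\dist^2(F,\mathbb{E}_n)=\|F\|^2_{\mathcal{Q}_2}-\|\mathbb{P}_n(F)\|^2_{\mathcal{Q}_2}$, and exploit that the $\mathcal{Q}_2$-inner product factorizes between $x'$ and $x_3$ so that orthonormality of the $p_k$ kills the cross-terms. The only cosmetic slip is the phrase ``bilinearity of $\mathcal{Q}_2(x',\cdot)$''---it is a quadratic form, and what you are really using is homogeneity $\mathcal{Q}_2(x',cF)=c^2\mathcal{Q}_2(x',F)$ together with the polarized bilinear form $\mathcal{L}_{2,x'}$, but the computation itself is right.
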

\begin{proof}
The Lemma results by a straightforward calculation:
\begin{equation*}
\begin{split}
\dist^2_{\mathcal{Q}_2}(F, \mathbb{E}_1) & = \|F\|_{\mathcal{Q}_2}^2 - \|\mathbb{P}_1(F)\|_{\mathcal{Q}_2}^2 =
\|F\|_{\mathcal{Q}_2}^2 - \Big(\big\|\int_{-1/2}^{1/2}p_1 F \,\rmd x_3 \big\|_{\mathcal{Q}_2}^2 
+ \big\|\int_{-1/2}^{1/2}p_0 F \,\rmd x_3 \big\|_{\mathcal{Q}_2}^2 \Big),\\
\dist^2_{\mathcal{Q}_2}(F, \mathbb{E}_2) & = \|F\|_{\mathcal{Q}_2}^2 -
\|\mathbb{P}_2(F)\|_{\mathcal{Q}_2}^2 \\ & =
\|F\|_{\mathcal{Q}_2}^2 - \Big(\big\|\int_{-1/2}^{1/2}p_2 F \,\rmd
x_3 \big\|_{\mathcal{Q}_2}^2 + \big\|\int_{-1/2}^{1/2}p_1 F \,\rmd x_3 \big\|_{\mathcal{Q}_2}^2 
+ \big\|\int_{-1/2}^{1/2}p_0 F \,\rmd x_3 \big\|_{\mathcal{Q}_2}^2 \Big),
\end{split}
\end{equation*}
where we have used that: $ \mathbb{P}_1(F)  = p_1\int_{-1/2}^{1/2}p_1F
\,\rmd x_3 + p_0 \int_{-1/2}^{1/2}p_0 F \,\rmd x_3$ and similarly:
$\mathbb{P}_2(F) = p_2\int_{-1/2}^{1/2}p_2F \,\rmd x_3 +
p_1 \int_{-1/2}^{1/2}p_1F \,\rmd x_3 + p_0\int_{-1/2}^{1/2}p_0 F \,\rmd x_3$.
\end{proof}

\medskip

\begin{thm}\label{Gliminf}
In the setting of Theorem \ref{compactness_thm}, 
$\liminf_{h\to 0} \frac{1}{h^2}\mathcal{E}^h(u^h)$ is bounded from below by:
\begin{equation*}
\begin{split}
 \mathcal I_2^O(y) & =\frac{1}{2}\int_{\Omega}\mathcal
Q_2\Big(x',x_3\nabla y(x')^{\trsp}\nabla \vec b(x') -\frac{1}{2}\mathcal{G}_1(x)_{2\times 2}\Big)\,\rmd x \\
& = \frac{1}{24}\int_{\omega} \mathcal Q_2\Big(x', \big(\nabla y(x')^{\trsp}\nabla
\vec b(x')\big)_{\sym}-\frac{1}{2}\bar{\mathcal{G}}_1(x')_{2\times
  2}\Big)\,\rmd x'+ \frac{1}{8}\dist^2_{\mathcal{Q}_2}\Big((\mathcal{G}_1)_{2\times 2}, \mathbb{E}_1\Big),
\end{split}
\end{equation*}
where $\bar{\mathcal{G}}_1$ is as in (\ref{EO}).
In the non-oscillatory case (\ref{NO}) this formula becomes:
\begin{equation*}
\begin{split}
 \mathcal I_2(y) = \frac{1}{24}\int_{\omega} \mathcal Q_2\Big(x', \big(\nabla y(x')^{\trsp}\nabla
\vec b(x')\big)_{\sym}-\frac{1}{2}\partial_3 G(x',0)_{2\times 2}\Big)\,\rmd x'.
\end{split}
\end{equation*}
The first term in $\mathcal{I}_2^O$ coincides with $\mathcal{I}_2$
for the effective metric $\bar{G}$ in (\ref{EO}).
\end{thm}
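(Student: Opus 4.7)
The plan is to adapt the Friesecke--James--M\"uller (FJM) rigidity combined with Taylor expansion of $W$ near $SO(3)$, as developed in \cite{BLS} for $x_3$-independent metrics, and to track the extra contribution coming from the $h$-order prestrain term $\mathcal{G}_1(x', x_3/h)$ in (\ref{full}).

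First, by the bound (\ref{Ch2}) together with property (iii) of $W$, one obtains $\frac{1}{h}\int_{\Omega^h}\dist^2(\nabla u^h\bar A^{-1}, SO(3))\,\rmd x \leq Ch^2$, so the FJM argument applied on a grid of $h$-sized squares of $\omega$ produces (exactly as in \cite{BLS}) a field $R^h:\omega\to SO(3)$ with $\int_\Omega |(R^h)^{\trsp}\nabla_h y^h\bar A^{-1} - Id_3|^2\,\rmd x \leq Ch^2$, where $\nabla_h y^h = \bigl(\partial_1 y^h, \partial_2 y^h, \frac{1}{h}\partial_3 y^h\bigr)$. In particular $R^h\to Q := [\partial_1 y, \partial_2 y, \vec b]\bar A^{-1}\in SO(3)$ in $L^2$. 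I then introduce the scaled strain $K^h = \frac{1}{h}\bigl((R^h)^{\trsp}\nabla_h y^h (A^h_\Omega)^{-1} - Id_3\bigr)$, where $A^h_\Omega(x', x_3) = A^h(x', hx_3) = \bar A(x') + hA_1(x', x_3) + o(h)$ is the rescaling of $A^h$ to the fixed domain $\Omega$; the $\mathcal{Q}_3$-coercivity of $W$ gives $\|K^h\|_{L^2(\Omega)}\leq C\sqrt{\mathcal{E}^h(u^h)/h^2}$, hence $K^h\rightharpoonup K$ weakly in $L^2$ up to a subsequence.

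Second, the Taylor expansion of $W$ at $Id_3$ via (i), (ii), (iv) --- after a standard truncation to control the set where $h|K^h|$ is large --- yields pointwise convergence $\frac{1}{h^2}W(Id_3 + hK^h)\to \frac{1}{2}\mathcal{Q}_3(\sym K)$ a.e., and weak $L^2$-lower semicontinuity of $F\mapsto\int\mathcal{Q}_3(\sym F)\,\rmd x$ gives the bound $\liminf_{h\to 0}\frac{1}{h^2}\mathcal{E}^h(u^h) \geq \frac{1}{2}\int_\Omega \mathcal{Q}_3(\sym K)\,\rmd x$. The key step is then to identify the $2\times 2$ principal minor of $\bar A(\sym K)\bar A$. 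Using that $R^h$ is independent of $x_3$, the strong convergence $\frac{1}{h}\partial_3 y^h\to \vec b$ from Theorem \ref{compactness_thm}, the expansion $(A^h_\Omega)^{-1} = \bar A^{-1} - h\bar A^{-1}A_1\bar A^{-1} + o(h)$, and the relation $2(\bar A A_1)_{\sym} = \mathcal{G}_1$ from (\ref{Adef}), one obtains
\[
\bigl(\bar A(\sym K)\bar A\bigr)_{2\times 2}(x', x_3) = x_3\bigl((\nabla y)^{\trsp}\nabla \vec b\bigr)_{\sym}(x') - \frac{1}{2}(\mathcal{G}_1)_{2\times 2}(x', x_3).
\]
The third column and row of $K$ remain unconstrained; minimizing $\mathcal{Q}_3$ pointwise over them according to (\ref{cdef}) converts $\mathcal{Q}_3$ to $\mathcal{Q}_2$ applied to the above $2\times 2$ minor, yielding the first formula for $\mathcal{I}_2^O$ in the statement.

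Finally, to pass to the equivalent decomposed form I apply Lemma \ref{proj}. The summand $x_3\bigl((\nabla y)^{\trsp}\nabla\vec b\bigr)_{\sym}$ already lies in $\mathbb{E}_1$, so the Pythagorean split under $\|\cdot\|_{\mathcal{Q}_2}^2$ separates it from the component of $-\frac{1}{2}(\mathcal{G}_1)_{2\times 2}$ orthogonal to $\mathbb{E}_1$. The zero-mean assumption on $\mathcal{G}_1$ in (\ref{O}) reduces $\mathbb{P}_1\bigl((\mathcal{G}_1)_{2\times 2}\bigr)$ to $x_3(\bar{\mathcal{G}}_1)_{2\times 2}$, which is precisely the effective metric correction from (\ref{EO}); using $\int_{-1/2}^{1/2}x_3^2\,\rmd x_3 = \frac{1}{12}$ to integrate out the $\mathbb{E}_1$-part then produces the bending coefficient $\frac{1}{24}$ and the excess coefficient $\frac{1}{8}$ as in the claim, and comparison with the $x_3$-linear case gives the non-oscillatory formula $\mathcal{I}_2$. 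The main obstacle is the second step: one must correctly pass to the weak $L^2$ limit in the oscillatory factor $A_1(x', x_3/h)$, which after the rescaling $\Omega^h\to \Omega$ becomes simply $A_1(x', x_3)$ and is therefore captured \emph{pointwise} in the transversal variable rather than in average; it is this pointwise capture that produces the excess $\dist^2_{\mathcal{Q}_2}$ term absent in the treatment of \cite{BLS, LP11}.
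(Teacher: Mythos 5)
Your overall architecture (FJM rigidity, Taylor expansion of $W$, conversion of $\mathcal{Q}_3$ to $\mathcal{Q}_2$, Legendre decomposition) matches the paper's, but there is a genuine gap in the identification of the weak limit $K$. You assert the exact identity
\begin{equation*}
\bigl(\bar A(\sym K)\bar A\bigr)_{2\times 2}(x', x_3) = x_3\bigl((\nabla y)^{\trsp}\nabla \vec b\bigr)_{\sym}(x') - \tfrac{1}{2}(\mathcal{G}_1)_{2\times 2}(x', x_3),
\end{equation*}
but the compactness analysis only determines the $x_3$-dependent part of the limit strain, not its $x_3$-constant part. The correct conclusion, as in the paper (which imports the corresponding statement from \cite{BLS} for $\bar{S}^h$ and then adds the $A_1$ correction), is that there exists an \emph{unknown} field $\bar s\in L^2(\omega,\mathbb{R}^{2\times 2})$ with
\begin{equation*}
\bigl(\bar A(\sym K)\bar A\bigr)_{2\times 2}(x', x_3) = \bar s_{\sym}(x') + x_3\bigl((\nabla y)^{\trsp}\nabla \vec b\bigr)_{\sym}(x') - \tfrac{1}{2}(\mathcal{G}_1)_{2\times 2}(x', x_3).
\end{equation*}
Without acknowledging this term, your chain of inequalities ends with $\frac{1}{2}\|x_3(\ldots)-\frac{1}{2}(\mathcal{G}_1)_{2\times 2}\|^2_{\mathcal{Q}_2}$ by fiat, which is the claimed $\mathcal{I}_2^O(y)$ but does not actually follow from what you have proved.

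The argument is salvageable, and the repair is exactly what makes the Legendre projections worthwhile: $\bar s_{\sym}$ is the $p_0$-component, $x_3((\nabla y)^{\trsp}\nabla\vec b)_{\sym} - \frac{1}{2}\mathbb{P}_1((\mathcal{G}_1)_{2\times 2})$ is the $p_1$-component, and $-\frac{1}{2}\mathbb{P}_1^\perp((\mathcal{G}_1)_{2\times 2})$ is the $\mathbb{E}_1^\perp$-component. Since $\mathcal{Q}_2(x',\cdot)$ carries no $x_3$-dependence, these are mutually $\langle\cdot,\cdot\rangle_{\mathcal{Q}_2}$-orthogonal, so
\begin{equation*}
\bigl\|\bar s_{\sym} + x_3(\cdots)-\tfrac{1}{2}(\mathcal{G}_1)_{2\times 2}\bigr\|^2_{\mathcal{Q}_2}
\,\geq\,
\bigl\|x_3(\cdots)-\tfrac{1}{2}\mathbb{P}_1((\mathcal{G}_1)_{2\times 2})\bigr\|^2_{\mathcal{Q}_2}
+ \tfrac{1}{4}\dist^2_{\mathcal{Q}_2}\bigl((\mathcal{G}_1)_{2\times 2},\mathbb{E}_1\bigr),
\end{equation*}
and the right-hand side is $2\,\mathcal{I}_2^O(y)$ after the $\frac{1}{12}$ factor from $\int_{-1/2}^{1/2}x_3^2\,\rmd x_3$. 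Once you include $\bar s$ and this orthogonal-dropping step, your proof coincides with the paper's.
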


\begin{proof}
The argument follows the proof of \cite[Theorem 2.1]{BLS} and thus we only
indicate its new ingredients. Applying the compactness analysis for
the $x_3$-independent metric $\bar G$, one obtains the sequence
$\{R^h\in L^2(\omega, SO(3))\}_{h\to 0}$ of approximating rotation-valued fields, satisfying:
\begin{equation}\label{Rh}
\frac{1}{h} \int_{\Omega^h} |\nabla u^h(x) \bar A(x')^{-1} -
R^h(x')|^2 \,\rmd x \leq Ch^2.
\end{equation}
Define now the family $\{S^h\in L^2(\Omega, \mathbb{R}^{3\times
  3})\}_{h\to 0}$ by:
$$S^h(x', x_3) = \frac{1}{h}\Big( R^h(x')^{\trsp}\nabla u^h(x', hx_3)
A^h(x', hx_3)^{-1}- Id_3\Big). $$
According to \cite{BLS}, the same quantities, written for the metric
$\bar G$ rather than $G^h$:
$$\bar S^h(x', x_3) = \frac{1}{h}\Big( R^h(x')^{\trsp}\nabla u^h(x', hx_3)
\bar A(x')^{-1}- Id_3\Big), $$
converge weakly in $L^2(\Omega, \mathbb{R}^{3\times 3})$ to $\bar S$, such that:
\begin{equation}\label{22}
\big(\bar A(x') \bar S(x', x_3) \bar A(x')\big)_{2\times 2} = \bar
s(x') + x_3 \nabla y(x')^{\trsp}\nabla\vec b(x'),
\end{equation}
with some appropriate $\bar s\in L^2(\omega, \mathbb{R}^{2\times 2})$. Observe that:
$$S^h(x', x_3) = \bar S^h(x', x_3) + \frac{1}{h}R^h(x')^{\trsp}\nabla ^hu(x', hx_3)
\big(A^h(x', hx_3)^{-1} - \bar A(x')^{-1}\big)$$
and that the term $R^h(x')^{\trsp}\nabla u^h(x', hx_3)$ converges
strongly in $L^2(\Omega,\mathbb{R}^{3\times 3})$ to $\bar A(x')$.
On the other hand, the remaining factor converges uniformly on $\Omega$ as $h\to 0$, because:
\begin{equation}\label{noga}
\frac{1}{h}\big(A^h(x', hx_3)^{-1} - \bar A(x')^{-1}\big) =
\displaystyle{-\bar A(x')^{-1} A_1(x',x_3)\bar A(x')^{-1} + \mathcal{O}(h) } 
\end{equation}
Concluding, $S^h$ converge weakly in $L^2(\Omega, \mathbb{R}^{3\times
  3})$ to $ S$, satisfying by (\ref{22}):
\begin{equation}\label{29}
\big(\bar A(x') S(x', x_3) \bar A(x')\big)_{2\times 2} 
= \bar s(x') + x_3 \nabla y(x')^{\trsp}\nabla \vec b(x') - \big(\bar
A(x')A_1(x',x_3)\big)_{2\times 2}.
\end{equation}

\smallskip

Consequently, using the definition of $S^h$ and frame invariance of $W$
and Taylor expanding $W$ at $Id_3$ on the set $\{|S^h|^2\leq 1/h\}$, we obtain:
\begin{equation*}
\begin{split}
\liminf_{h\to 0} \mathcal{E}^h(u^h) & = \liminf_{h\to 0}
\frac{1}{h^2}\int_{\Omega} W\big(Id_3 + hS^h(x)\big) \,\rmd x \\ & \geq \liminf_{h\to 0}
\frac{1}{2}\int_{\{|S^h|^2\leq 1/h\}}\mathcal{Q}_3(S^h(x)) +
o(|S^h|^2)   \,\rmd x \\ & \geq 
\frac{1}{2}\int_{\Omega}\mathcal{Q}_3\big(S(x)\big) \,\rmd x
\geq \frac{1}{2}\int_{\Omega}\mathcal{Q}_2\Big(x', \big(\bar
A(x')S(x)\bar A(x')\big)_{2\times 2}\Big) \,\rmd x. 
\end{split}
\end{equation*}
Further, recalling (\ref{29}) and (\ref{Adef}) we get:
\begin{equation*}
\begin{split}
\liminf_{h\to 0} \mathcal{E}^h&(u^h)  \geq  \frac{1}{2}\big\|\bar
s_{\sym} + x_3\big((\nabla y)^{\trsp}\nabla \vec b\big)_{\sym}
- \frac{1}{2}(\mathcal{G}_1)_{2\times 2} \big\|_{\mathcal{Q}_2}^2 \\ &
= \frac{1}{2}\big\|\mathbb{P}_1\big(\bar
s_{\sym} + x_3\big((\nabla y)^{\trsp}\nabla \vec b\big)_{\sym}
- \frac{1}{2}(\mathcal{G}_1)_{2\times 2} \big)\big\|_{\mathcal{Q}_2}^2
+ \frac{1}{8}\big\|(\mathcal{G}_1)_{2\times 2} -
\mathbb{P}_1((\mathcal{G}_1)_{2\times 2})\big\|_{\mathcal{Q}_2}^2  
\\ & = \frac{1}{2}\big\|\bar s_{\sym} \big\|_{\mathcal{Q}_2}^2
+ \frac{1}{2}\big\|x_3\Big(\big((\nabla y)^{\trsp}\nabla \vec b\big)_{\sym}
-  6 \int_{-1/2}^{1/2}t(\mathcal{G}_1)_{2\times 2} \,\rmd t\Big) \big\|_{\mathcal{Q}_2}^2  
+  \frac{1}{8}\dist^2_{\mathcal{Q}_2}\big((\mathcal{G}_1)_{2\times 2}, \mathbb{E}_1\big) \\ & \geq 
\frac{1}{24} \big\|\big((\nabla y)^{\trsp}\nabla \vec b\big)_{\sym}
-  6 \int_{-1/2}^{1/2}t(\mathcal{G}_1)_{2\times 2} \,\rmd t \big\|_{\mathcal{Q}_2}^2  
+  \frac{1}{8}\dist^2_{\mathcal{Q}_2}\big((\mathcal{G}_1)_{2\times 2}, \mathbb{E}_1\big)
=\mathcal{I}_2^{O}(y),
\end{split}
\end{equation*}
where we have used the fact that $\mathcal{Q}_2(x', \cdot)$ is a
function of its symmetrized argument and Lemma \ref{proj}. The formula for
$\mathcal{I}_2$ in case (\ref{NO}) is immediate.
\end{proof}

Our next result is the upper bound, parallel to the lower bound in Theorem \ref{Gliminf}:

\begin{thm} \label{Glimsup}
Assume (\ref{O}). For every isometric immersion $y\in
W^{2,2}(\omega,\mathbb{R}^3)$ of the reduced midplate metric $\bar{\mathcal 
{G}}_{2\times 2}$ as in (\ref{iso}), there exists a sequence $\{u^h\in
W^{1,2}(\Omega^h, \R^3)\}_{h\to 0}$ such that the sequence $\{y^h(x', x_3) =
u^h(x', hx_3)\}_{h\to 0}$ converges in $\WW^{1,2}(\Omega, \mathbb{R}^3)$ to $y$ and: 
\begin{equation}\label{up_bd2} 
\lim_{h\to 0} \frac{1}{h^2}\mathcal E^h(u^h) = \displaystyle{\mathcal{I}_2^O(y) } 
\end{equation}
Automatically, $\frac{1}{h}\partial_3y^h$ converges in
$L^2(\Omega,\mathbb{R}^3)$ to $\vec b\in W^{1,2}\cap L^\infty(\omega,
\mathbb{R}^3)$ as in (\ref{cosserat}). 
\end{thm}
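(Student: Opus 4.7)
First I would reduce to $y \in \mathcal{C}^\infty(\bar\omega, \mathbb{R}^3)$ by a diagonal argument, appealing to the density of smooth isometric immersions of $\bar{\mathcal{G}}_{2\times 2}$ in $\mathcal{Y}_{\bar{\mathcal{G}}_{2\times 2}}$. For the flat case $\bar{\mathcal{G}}_{2\times 2}=\mathrm{Id}_2$ this is the standard result used in \cite{BLS, LP11}; for a general smooth midplate metric it follows by a Hornung-type mollification respecting the Riemannian constraint. The continuity of $\mathcal{I}_2^O$ under $W^{2,2}$-convergence of $y$, together with the corresponding $W^{1,2}$-convergence of $\vec b$ via (\ref{cosserat}), makes the diagonal argument routine.

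\textbf{Step 2: ansatz.} For smooth $y$, I would take
\begin{equation*}
u^h(x', x_3) = y(x') + x_3 \vec b(x') + h^2 \vec\psi\Big(x', \frac{x_3}{h}\Big),
\end{equation*}
with $\vec\psi \in \mathcal{C}^\infty(\bar\omega\times[-\tfrac{1}{2}, \tfrac{1}{2}], \mathbb{R}^3)$ to be chosen. By (\ref{iso}) and (\ref{cosserat}), the matrix field $R_0 \doteq [\partial_1 y, \partial_2 y, \vec b]\bar A^{-1}$ takes values in $SO(3)$, and (using $\bar A^{\trsp}=\bar A$) one has $\bar A R_0^{\trsp} = [\partial_1 y, \partial_2 y, \vec b]^{\trsp}$. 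A direct Taylor expansion in $h$ using (\ref{Adef}) yields
\begin{equation*}
R_0^{\trsp}\nabla u^h(x', ht)(A^h(x', ht))^{-1} = \mathrm{Id}_3 + h T^h(x', t) + O(h^2) \quad \text{uniformly on }\Omega,
\end{equation*}
where one computes
\begin{equation*}
\bar A T^h \bar A = [\partial_1 y, \partial_2 y, \vec b]^{\trsp}\big[t\partial_1 \vec b, t\partial_2 \vec b, \partial_t\vec\psi\big] - \bar A A_1(x', t) + o(1).
\end{equation*}
In particular, the sym $2\times 2$-block of $\bar A T^h \bar A$ equals $t\big((\nabla y)^{\trsp}\nabla\vec b\big)_{\sym}(x') - \tfrac{1}{2}(\mathcal{G}_1)_{2\times 2}(x', t)$ \emph{independently} of $\vec\psi$, while the effective symmetric third column depends on $\partial_t\vec\psi(x',t) \in \mathbb{R}^3$ through the pointwise invertible linear map $v \mapsto [\partial_1 y, \partial_2 y, \vec b]^{\trsp}v$.

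\textbf{Step 3: matching the relaxation and energy convergence.} I would solve the $3\times 3$ linear system pointwise in $(x', t)$ for $\partial_t\vec\psi$ so that the effective symmetric third column of $\bar A T^h \bar A$ realizes the optimal $c$ prescribed in (\ref{cdef}) for the $2\times 2$-matrix above; integrating in $t$ gives a smooth bounded $\vec\psi$. With this choice, $\mathcal{Q}_3(T^h)(x', t)\to \mathcal{Q}_2\big(x', t((\nabla y)^{\trsp}\nabla\vec b)_{\sym} - \tfrac{1}{2}(\mathcal{G}_1)_{2\times 2}\big)$ uniformly on $\Omega$. Since $|h T^h|$ is uniformly small, frame invariance $W(R_0 F)=W(F)$ together with the $\mathcal{C}^2$-expansion of $W$ at $\mathrm{Id}_3$ from assumption (iv), and the vanishing of $W$ and $DW$ at $\mathrm{Id}_3$, give $\tfrac{1}{h^2}W\big(\nabla u^h (A^h)^{-1}\big) = \tfrac{1}{2}\mathcal{Q}_3(T^h) + o(1)$ uniformly. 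Dominated convergence combined with the $\mathbb{E}_1$-decomposition from Lemma \ref{proj} (as already used in the proof of Theorem \ref{Gliminf}) yields $\tfrac{1}{h^2}\mathcal{E}^h(u^h) \to \mathcal{I}_2^O(y)$. The convergences $y^h \to y$ in $W^{1,2}(\Omega, \mathbb{R}^3)$ and $\tfrac{1}{h}\partial_3 y^h \to \vec b$ in $L^2(\Omega, \mathbb{R}^3)$ are immediate from the form of $u^h$.

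\textbf{Main obstacle.} The principal technical point is the density reduction in Step 1 for a general smooth metric $\bar{\mathcal{G}}_{2\times 2}$, which is not directly covered by the flat-case arguments of \cite{BLS} and requires an isometry-preserving mollification. A secondary subtlety is that the third \emph{row} of $\bar A T^h \bar A$ is not directly tunable through $\vec\psi$, so the optimum in (\ref{cdef}) must be matched via symmetrization; this reduces to the $3 \times 3$ linear system for $\partial_t\vec\psi$ whose coefficient matrix is the invertible $[\partial_1 y, \partial_2 y, \vec b]^{\trsp}$, and is therefore solvable with smooth and bounded $\vec\psi$ thanks to the regularity of $\mathcal{G}_1$.
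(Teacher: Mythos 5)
The heart of your construction in Steps 2--3 is correct and essentially coincides with the paper's recovery sequence: with $Q=[\partial_1y,\partial_2y,\vec b]$ and $R_0=Q\bar A^{-1}\in SO(3)$, one gets $\bar A T^h\bar A = Q^{\trsp}[t\partial_1\vec b,\,t\partial_2\vec b,\,\partial_t\vec\psi]-\bar A A_1$, whose symmetric $2\times2$ block is $t((\nabla y)^{\trsp}\nabla\vec b)_{\sym}-\tfrac12(\mathcal{G}_1)_{2\times2}$, and you then tune $\partial_t\vec\psi$ through the invertible map $v\mapsto Q^{\trsp}v$ to hit the minimizer $c(x',\cdot)$ of (\ref{cdef}). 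This matches the paper's warping field $\vec d$ in (\ref{d-o}).

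The genuine gap is Step 1. You reduce to smooth $y$ by invoking density of smooth isometric immersions of $\bar{\mathcal{G}}_{2\times2}$ in the $W^{2,2}$ isometric immersion class. For the flat metric this is the Pakzad--Hornung density theorem, but for an arbitrary smooth Riemannian metric $\bar{\mathcal{G}}_{2\times2}$ such a density statement is \emph{not} available --- there is no ``Hornung-type mollification respecting the Riemannian constraint'' in the literature, and this is in fact an open problem for general metrics. As a result the diagonal argument you propose does not get off the ground. The paper (following \cite{FJM02, BLS}) avoids this entirely: one approximates the given $W^{2,2}$ field $y$ by a sequence $\{y^h\}\subset W^{2,\infty}(\omega,\mathbb{R}^3)$ that is \emph{not} required to satisfy the isometry constraint, but only to agree with $y$ outside a set of measure $o(h^2)$ and to obey the controlled blow-up rate $h\|y^h\|_{W^{2,\infty}}\to 0$; and similarly for $\vec b$ and for $\partial_3\vec d$. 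The Taylor expansion of $W$ at $Id_3$ then goes through on the good set, while the bad set is negligible. This truncation device, rather than density of smooth isometries, is the correct way to lift your Steps 2--3 from smooth $y$ to general $W^{2,2}$ isometric immersions. Replacing your Step 1 by this truncation construction (and checking that the defect introduced on the bad set does not contribute to the limit) would repair the proof; as written, it contains a step that cannot be justified.

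A secondary remark: when you speak of ``matching the effective symmetric third column'' via symmetrization, be aware that the minimization in (\ref{cdef}) is over $c\in\mathbb{R}^3$, i.e.\ a rank-one perturbation $c\otimes e_3$ before symmetrization, so the linear system for $\partial_t\vec\psi$ must be solved for the full vector $c(x',\cdot)$ and then transported by $Q^{\trsp,-1}$; this is precisely what the explicit formula (\ref{d-o}) does, including the compensating $(\mathcal{G}_1)_{33}$ and $\nabla|\vec b|^2$ terms that your sketch leaves implicit.
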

\begin{proof} 
Given an admissible $y$, we define $\vec b$ by (\ref{cosserat}) and also
define the matrix field:
\begin{equation}\label{Qyb}
Q=\big[ \partial_1y,~ \partial_2 y,~ \vec b \big]\in
W^{1,2}\cap L^\infty(\omega, \mathbb{R}^{3\times 3}).
\end{equation}
It follows that $Q(x') \bar A(x')^{-1}\in SO(3)$ on $\omega$. 
The recovery sequence $u^h$ satisfying (\ref{up_bd2}) is then constructed via a diagonal
argument, applied to the explicit deformation fields below. Again,
we only indicate the new ingredients with respect to the proof in \cite[Theorem 3.1]{BLS}.

\smallskip

Recalling the notion of the linear vector association
$c(x',F_{2\times 2})$ that is minimizing in the right hand side of the
formula (\ref{cdef}),  we define the vector field $\vec d\in L^2(\Omega, \mathbb{R}^3)$ by:
\begin{equation}\label{d-o}
\begin{split}
\vec d(x', x_3) = Q(x')^{{\trsp}, -1}&\bigg( \frac{x_3^2}{2}\Big(c\big(x', \nabla y(x')^{\trsp}\nabla \vec
b(x')\big) - \frac{1}{2} \left[ \begin{array}{c}\nabla |\vec
    b|^2(x')\\ 0\end{array}\right]\Big) \\ & \quad - \frac{1}{2}
c\Big(x', \int_0^{x_3}\mathcal{G}_1(x', t)_{2\times 2}\,\rmd t\Big)
\\ & \quad  + \int_0^{x_3}\mathcal{G}_1(x', t)\,\rmd t \, e_3 -\frac{1}{2}
\int_0^{x_3} \mathcal{G}_1(x', t)_{33}\,\rmd t \, e_3 \bigg).
\end{split}
\end{equation}
In view of (\ref{cdef}), the above definition is
equivalent to the vector field $\partial_3\vec d\in L^2(\Omega, \mathbb{R}^3)$ being,
for each $(x', x_3)\in \Omega$, the unique solution to:
\begin{equation*} 
\begin{split}
\mathcal{Q}_2&\Big(x',x_3\nabla y(x')^{\trsp}\nabla \vec b(x') -
\frac{1}{2}\mathcal{G}_1(x', x_3)_{2\times 2}\Big)  \\ & \qquad =
\mathcal{Q}_3\bigg(\bar A(x')^{-1}\Big(Q(x')^{\trsp} \left[x_3\partial_1\vec b(x'),
  ~ x_3\partial_2\vec b(x'), ~ \partial_3\vec d(x', x_3)\right] -
\frac{1}{2}\mathcal{G}_1(x', x_3) \Big)\bar A(x')^{-1}\bigg).
\end{split}
\end{equation*}
One then approximates $y, \vec b$ by sequences $\{y^h\in
W^{2, \infty}(\omega, \mathbb{R}^3)\}_{h\to 0}$, $\{b^h\in
W^{1, \infty}(\omega, \mathbb{R}^3)\}_{h\to 0}$ respectively, and request them to satisfy conditions exactly
as in the proof of \cite[Theorem 3.1]{BLS}. The warping field $\vec d$
is approximated by $d^h(x', x_3)=\int_0^{x_3} \bar d^h(x', t)\,\rmd t$, where:
$$\bar d^h\to \bar d = \partial_3 \vec d \quad \mbox{strongly in }
L^2(\Omega, \mathbb{R}^3) \quad \mbox{ and } \quad h\|\bar
d^h\|_{W^{1,\infty}(\Omega, \mathbb{R}^3)}\to 0 \quad  \mbox{ as } h\to 0.$$
Finally, we define:
\begin{equation}\label{rs-o}
u^h(x', x_3) = y^h(x')+x_3b^h(x') + h^2 d^h\big(x', \frac{x_3}{h}\big),
\end{equation}
so that, with the right approximation error, there holds:
$$\nabla u^h(x', x_3) \approx Q(x') + h \bigg[\frac{x_3}{h}\partial_1\vec
  b(x'),~ \frac{x_3}{h}\partial_2\vec b(x'),~ \partial_3\vec d\big(x', \frac{x_3}{h}\big)\bigg]. $$
Using Taylor's expansion of $W$, the definition (\ref{d-o}) and the
controlled blow-up rates of the approximating sequences, the construction is done.
\end{proof}

\medskip

We conclude this section by noting the following easy direct consequence of Theorems \ref{Gliminf}
and \ref{Glimsup}:

\begin{cor}\label{limit_scaled}
If the set of $W^{2,2}(\omega,\mathbb{R}^3)$ isometric immersions of $\bar{\mathcal{G}}_{2\times 2}$ is
nonempty, then the functional $\mathcal{I}_2^O$ attains its infimum and:
$$\lim_{h\to 0} \frac{1}{h^2}\inf\mathcal{E}^h = \min \;\mathcal{I}_2^0. $$
The infima  in the left hand side are taken over 
$W^{1,2}(\Omega^h, \mathbb{R}^3)$ deformations $u^h$, whereas the minima in
the right hand side are taken over $W^{2,2}(\omega,\mathbb{R}^3)$
isometric immersions $y$ of $\bar{\mathcal{G}}_{2\times 2}$.
\end{cor}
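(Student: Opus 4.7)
The plan is to deduce this corollary as the standard consequence of $\Gamma$-convergence: combine the liminf inequality of Theorem \ref{Gliminf}, the recovery sequence of Theorem \ref{Glimsup}, and the compactness of Theorem \ref{compactness_thm}, then infer both the convergence of the minimal values and attainment of the infimum.

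First I would establish the easy inequality $\limsup_{h\to 0} \frac{1}{h^2}\inf\mathcal{E}^h \leq \inf\mathcal{I}_2^O$. This is immediate: given any admissible $W^{2,2}$ isometric immersion $y$ of $\bar{\mathcal{G}}_{2\times 2}$, Theorem \ref{Glimsup} produces a sequence $\{u^h\}$ with $\frac{1}{h^2}\mathcal{E}^h(u^h) \to \mathcal{I}_2^O(y)$. Thus $\limsup_h \frac{1}{h^2}\inf\mathcal{E}^h \leq \mathcal{I}_2^O(y)$, and taking the infimum over $y$ yields the claim. A direct corollary is that there exists a uniform constant $C$ for which $\inf\mathcal{E}^h \leq C h^2$ for all small $h$.

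Next I would extract the matching lower bound, together with the existence of a minimizer. Choose an almost-minimizing sequence $\{u^h\}$, say with $\mathcal{E}^h(u^h) \leq \inf\mathcal{E}^h + h^3 \leq Ch^2$. The bound (\ref{Ch2}) activates Theorem \ref{compactness_thm}: a subsequence of the rescalings $y^h$ converges in $W^{1,2}(\Omega, \mathbb{R}^3)$ to some $y \in W^{2,2}(\omega,\mathbb{R}^3)$ satisfying $(\nabla y)^{\trsp}\nabla y = \bar{\mathcal{G}}_{2\times 2}$, so $y$ is an admissible immersion for $\mathcal{I}_2^O$. Applying Theorem \ref{Gliminf} along this subsequence gives
\begin{equation*}
\inf \mathcal{I}_2^O \leq \mathcal{I}_2^O(y) \leq \liminf_{h\to 0} \frac{1}{h^2}\mathcal{E}^h(u^h) = \liminf_{h\to 0}\frac{1}{h^2}\inf\mathcal{E}^h.
\end{equation*}

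Combining the two bounds yields $\lim_{h\to 0}\frac{1}{h^2}\inf\mathcal{E}^h = \inf\mathcal{I}_2^O$, and the above chain reads $\mathcal{I}_2^O(y) = \inf\mathcal{I}_2^O$, so the limit $y$ produced by compactness is itself a minimizer, proving attainment. No genuine obstacle arises: the only point requiring a mild check is that the almost-minimizing choice $\mathcal{E}^h(u^h) \leq \inf\mathcal{E}^h + h^3$ does in fact satisfy (\ref{Ch2}), and this is guaranteed precisely by the first step, which uses the nonemptiness of the isometric immersion class as its sole hypothesis.
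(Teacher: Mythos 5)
Your argument is the standard $\Gamma$-convergence deduction that the paper leaves implicit (the corollary is stated without proof as an ``easy direct consequence'' of Theorems~\ref{Gliminf} and~\ref{Glimsup}), and it is essentially correct. The one small imprecision is the order in which subsequences are taken in the lower-bound step: you should first pass to a subsequence $h_n\to 0$ realizing $\liminf_h \frac{1}{h^2}\inf\mathcal{E}^h$, \emph{then} invoke Theorem~\ref{compactness_thm} to extract a further subsequence along which $y^{h_n}\to y$; as written, you apply the liminf inequality of Theorem~\ref{Gliminf} along the compactness subsequence but then equate to the liminf over the full sequence, which is not automatic. With that one-line reordering the chain of inequalities closes correctly and yields both the limit identity and attainment, exactly as intended.
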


\section{Identification of the $Ch^2$ scaling regime}\label{conditions_flateness} 

In this section, we identify the equivalent conditions for $\inf
\mathcal{E}^h \sim h^2$ in terms of curvatures of the metric tensor
$\bar G$ in case (\ref{NO}). We begin by expressing the integrand
tensor in the residual energy $\mathcal{I}_2$
in terms of the shape operator on the deformed midplate. Recall that
we always use the Einstein summation convention over repeated indices
running from $1$ to $3$.

\begin{lemma}\label{lem_term}
In the the non-oscillatory setting (\ref{NO}), let $y\in
W^{2,2}(\omega,\mathbb{R}^3)$ be an isometric immersion of the metric
$\bar{\mathcal{G}}_{2\times 2}$, so that (\ref{iso}) holds on $\omega$. Define
the Cosserat vector $\vec b$ according to (\ref{cosserat}). Then:
\begin{equation}\label{tensor1}
\big((\nabla y)^{\trsp}\nabla \vec b\big)_{\sym} -
\frac{1}{2}\partial_3G(x',0)_{2\times 2} =
\frac{1}{\sqrt{\bar{\mathcal{G}}^{33}}} \Pi_y
+\frac{1}{\bar{\mathcal{G}}^{33}}\left[\begin{array}{cc} \Gamma_{11}^3 &
    \Gamma_{12}^3\\ \Gamma_{12}^3 & \Gamma_{22}^3\end{array}\right](x',0),
\end{equation}
for all $x' \in \omega$. Above, $\bar{\mathcal{G}}^{33}= \langle \bar{\mathcal{G}}^{-1}e_3, e_3\rangle$,
whereas $\Pi_y = (\nabla y)^{\trsp}\nabla \vec \nu\in W^{1,2}(\omega,
\mathbb{R}^{2\times 2}_{\sym})$ is the second
fundamental form of the surface $y(\omega)\subset \mathbb{R}^3$, and
$\{\Gamma^i_{kl}\}_{i,k,l=1\ldots 3}$ are the Christoffel symbols of $G$:
$$\Gamma^i_{kl} = \frac{1}{2} G^{im}\big( \partial_l
G_{mk} + \partial_k G_{ml} - \partial_m G_{kl}\big).$$
\end{lemma}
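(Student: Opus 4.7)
The plan is to unfold the definition of $\vec b$ and compute the matrix field $(\nabla y)^{\trsp}\nabla\vec b$ directly, reducing everything to the metric $\bar{\mathcal{G}}$ and its first derivatives. Write $\vec b=(\nabla y)\alpha+\beta\,\vec\nu$, where $\alpha\in\mathbb{R}^2$ has components $\alpha^k$ solving $\bar{\mathcal{G}}_{lk}\alpha^k=\bar{\mathcal{G}}_{l3}$ for $l\in\{1,2\}$, and $\beta=\sqrt{\det\bar{\mathcal{G}}}/\sqrt{\det\bar{\mathcal{G}}_{2\times 2}}$. Differentiating and using the orthogonality $\langle\partial_i y,\vec\nu\rangle=0$ yields a three-piece decomposition of $\langle\partial_i y,\partial_j\vec b\rangle$: a $\bar{\mathcal{G}}_{ik}\partial_j\alpha^k$ contribution, an $\alpha^k\langle\partial_i y,\partial_j\partial_k y\rangle$ contribution, and the second-fundamental-form term $\beta(\Pi_y)_{ij}$.

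Next, I would use the isometry $(\nabla y)^{\trsp}\nabla y=\bar{\mathcal{G}}_{2\times 2}$ to convert the first two contributions into intrinsic metric expressions upon symmetrization in $(i,j)$. Differentiating $\bar{\mathcal{G}}_{ij}=\langle\partial_i y,\partial_j y\rangle$ yields the Gauss-type identity $\langle\partial_i y,\partial_j\partial_k y\rangle+\langle\partial_j y,\partial_i\partial_k y\rangle=\partial_k\bar{\mathcal{G}}_{ij}$, while differentiating the algebraic identity $\bar{\mathcal{G}}_{ik}\alpha^k=\bar{\mathcal{G}}_{i3}$ gives $\bar{\mathcal{G}}_{ik}\partial_j\alpha^k=\partial_j\bar{\mathcal{G}}_{i3}-\alpha^k\partial_j\bar{\mathcal{G}}_{ik}$. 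Combining, one obtains
\begin{equation*}
\big((\nabla y)^{\trsp}\nabla\vec b\big)_{\mathrm{sym},ij} = \tfrac{1}{2}\big(\partial_j\bar{\mathcal{G}}_{i3}+\partial_i\bar{\mathcal{G}}_{j3}\big) - \tfrac{1}{2}\alpha^k\big(\partial_j\bar{\mathcal{G}}_{ik}+\partial_i\bar{\mathcal{G}}_{jk}-\partial_k\bar{\mathcal{G}}_{ij}\big) + \beta(\Pi_y)_{ij},
\end{equation*}
with $k$ summed over $\{1,2\}$.

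The final step is a purely linear-algebraic identification with the right-hand side of (\ref{tensor1}). Cramer's rule gives $\bar{\mathcal{G}}^{33}=\det\bar{\mathcal{G}}_{2\times 2}/\det\bar{\mathcal{G}}$, hence $\beta=1/\sqrt{\bar{\mathcal{G}}^{33}}$, matching the coefficient of $\Pi_y$ in the claim. Next, the off-diagonal vanishing of $\bar{\mathcal{G}}\bar{\mathcal{G}}^{-1}=Id_3$ in row $l\in\{1,2\}$ and column $3$ forces $\alpha^k=-\bar{\mathcal{G}}^{3k}/\bar{\mathcal{G}}^{33}$ for $k\in\{1,2\}$. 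Substituting this and expanding $\Gamma^3_{ij}(x',0)=\tfrac{1}{2}\bar{\mathcal{G}}^{3m}\big(\partial_j G_{mi}+\partial_i G_{mj}-\partial_m G_{ij}\big)(x',0)$, the $m\in\{1,2\}$ part of the sum reproduces exactly $-\tfrac{1}{2}\bar{\mathcal{G}}^{33}\alpha^k\big(\partial_j\bar{\mathcal{G}}_{ik}+\partial_i\bar{\mathcal{G}}_{jk}-\partial_k\bar{\mathcal{G}}_{ij}\big)$, while the $m=3$ term produces $\tfrac{1}{2}\bar{\mathcal{G}}^{33}\big(\partial_i\bar{\mathcal{G}}_{j3}+\partial_j\bar{\mathcal{G}}_{i3}-\partial_3 G(x',0)_{ij}\big)$. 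Dividing by $\bar{\mathcal{G}}^{33}$ and rearranging recovers (\ref{tensor1}).

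The only real obstacle is careful index bookkeeping, in particular distinguishing the two-dimensional sums originating from the midplate immersion (indices ranging over $\{1,2\}$, where tangent derivatives commute with evaluation at $x_3=0$) from the three-dimensional sum in the definition of $\Gamma^3_{ij}$, whose $m=3$ contribution is precisely what produces both the normal-derivative term $\partial_3 G(x',0)_{2\times 2}$ on the left-hand side of (\ref{tensor1}) and the tangential $\bar{\mathcal{G}}_{\cdot 3}$ terms that cancel against the first summand of the displayed formula above. No deeper geometric input beyond the Gauss identity and Cramer's rule is required.
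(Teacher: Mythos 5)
Your proof is correct, arrives at (\ref{tensor1}) by a related but organizationally distinct route, and its index bookkeeping checks out. You decompose $\vec b=(\nabla y)\alpha+\beta\vec\nu$ up front, differentiate directly, symmetrize, and feed in only the identity $\partial_k\bar{\mathcal{G}}_{ij}=\langle\partial_{ik}y,\partial_j y\rangle+\langle\partial_i y,\partial_{jk}y\rangle$ together with the derivative of the linear constraint defining $\alpha$; the three-dimensional Christoffel symbol $\Gamma^3_{ij}$ then materializes only at the very end, once $\alpha^k=-\bar{\mathcal{G}}^{3k}/\bar{\mathcal{G}}^{33}$ is substituted and the $m$-sum is split. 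The paper instead keeps $\vec b$ intact, uses $Q^{\trsp}Q=\bar{\mathcal{G}}$ to reduce the left side to the single quantity $\langle\partial_{ij}y,\vec b\rangle$ via (\ref{y0}), and disposes of it through the frame decomposition $(\nabla y)^{\trsp}\partial_{ij}y=\Gamma^m_{ij}(x',0)\left[\begin{smallmatrix}\bar{\mathcal{G}}_{m1}\\\bar{\mathcal{G}}_{m2}\end{smallmatrix}\right]$ (an identity stated there without proof and justified only later, in Lemma \ref{new_basis}), finally subtracting the Cosserat tangential part of $\vec b$ to expose $\Pi_{ij}$. The ingredients are the same — the Koszul/Gauss relation, $\beta=1/\sqrt{\bar{\mathcal{G}}^{33}}$ and $\alpha^k=-\bar{\mathcal{G}}^{3k}/\bar{\mathcal{G}}^{33}$ from Cramer's rule. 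Your version is more elementary and self-contained, at the cost of heavier index bookkeeping; the paper's is slightly slicker because it packages the tangential projection of $\partial_{ij}y$ as Christoffel symbols from the start rather than reconstructing $\Gamma^3_{ij}$ after the fact.
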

\begin{proof}
The proof is an extension of the arguments in \cite[Theorem 5.3]{BLS},
which we modify for the case of $x_3$-dependent metric $G$. Firstly, the
fact that $Q^{\trsp}Q=\bar{\mathcal{G}}$ with $Q$ defined in (\ref{Qyb}), yields:
\begin{equation}\label{y0}
\big((\nabla y)^{\trsp}\nabla \vec b\big)_{\sym} = \Big(
\big[\partial_i\bar{\mathcal{G}}_{j3}\big]_{i,j=1, 2} \Big)_{\sym} -
\big[\langle \partial_{ij}y, \vec b\rangle\big]_{i,j=1, 2}. 
\end{equation}
Also, $\partial_i\bar{\mathcal{G}} = 2\big((\partial_i Q)^{\trsp} Q\big)_{\sym}$
for $i=1,2$, results in: 
\begin{equation}\label{y2}
\langle \partial_{ij}y, \partial_ky\rangle = \frac{1}{2}\big( \partial_iG_{kj} + \partial_j
G_{ik}-\partial_kG_{ij}\big)
\end{equation}
and:
$$(\nabla y)^{\trsp}\partial_{ij}y = \Gamma_{ij}^m(x',0) \left[\begin{array}{c}
\bar{\mathcal{G}}_{m1}\\ \bar{\mathcal{G}}_{m2}\end{array}\right] \qquad\mbox{ for }\;  i,j=1,2.$$
Consequently, we obtain the formula:
\begin{equation*}
\begin{split}
\left[ \bar{\mathcal{G}}_{13}, ~ \bar{\mathcal{G}}_{23}\right] (\bar{\mathcal{G}}_{2\times 2})^{-1}
(\nabla y)^{\trsp}\partial_{ij}y & = \Bigg[ \bar{\mathcal{G}}_{13}, ~ \bar{\mathcal{G}}_{23},
  ~ \left[\bar{\mathcal{G}}_{13}, ~ \bar{\mathcal{G}}_{23}\right] (\bar{\mathcal{G}}_{2\times
    2})^{-1}\left[\begin{array}{c} \bar{\mathcal{G}}_{13}\\ \bar
      {\mathcal{G}}_{23}\end{array}\right]\Bigg] \left[\begin{array}{c}
    {\Gamma}_{ij}^1\\  {\Gamma}_{ij}^2 \\  {\Gamma}_{ij}^3\end{array}\right](x',0)
\\ & = \bar{\mathcal{G}}_{m3}\Gamma_{ij}^m(x',0) -
\frac{1}{\bar{\mathcal{G}}^{33}}\Gamma_{ij}^3(x',0). 
\end{split}
\end{equation*}
Computing the normal vector $\vec \nu$ from (\ref{cosserat}) and noting that $\det
  \bar{\mathcal{G}}_{2\times 2}/ \det\bar{\mathcal{G}} = \bar{\mathcal{G}}^{33}$, we get:
\begin{equation*}
\begin{split}\Pi_{ij} & = -\langle \partial_{ij}y, \vec \nu\rangle = -\sqrt{\bar
  {\mathcal{G}}^{33}} \Big(\langle \partial_{ij}y, \vec b\rangle -
\left[ \bar{\mathcal{G}}_{13}, ~ \bar{\mathcal{G}}_{23}\right] (\bar{\mathcal{G}}_{2\times 2})^{-1}
(\nabla y)^{\trsp}\partial_{ij} y\Big) \\ & = 
\sqrt{\bar{\mathcal{G}}^{33}}\big((\nabla y)^{\trsp}\nabla \vec b\big)_{\sym, ij}
- \frac{1}{\sqrt{\bar{\mathcal{G}}^{33}}}\Gamma_{ij}^3(x', 0) - \frac{\sqrt{\bar
  {\mathcal{G}}^{33}}}{2}\partial_3 G_{ij}(x',0), \qquad\mbox{ for }\;  i,j=1,2,
\end{split}
\end{equation*}
which completes the proof of (\ref{tensor1}).
\end{proof}

\smallskip

The key result of this section is the following:

\begin{thm}\label{Kirchhoff_optimal}
The energy scaling beyond the Kirchhoff regime:
$$\lim_{h\to 0}\frac{1}{h^2}\inf \mathcal{E}^h = 0$$
is equivalent to the following conditions:
\begin{itemize}
\item[(i)] in the oscillatory case (\ref{O})
\begin{equation}\label{I-IIO}
\left[~\mbox{\begin{minipage}{15cm} \vspace{1mm}
$(\mathcal{G}_1)_{2\times 2}\in\mathbb{E}_1$ or equivalently there holds:
\begin{equation*}
\mathcal{G}_1(x', x_3)_{2\times 2} =
x_3\bar{\mathcal{G}}_1(x')_{2\times 2} \qquad\mbox{ for all }\; (x', x_3)\in\bar\Omega.
\end{equation*} 
Moreover, 
condition  (\ref{I-II}) below must be satisfied with $G$ replaced by the effective
metric $\bar{G}$ in (\ref{EO}). This condition involves only
$\bar{\mathcal{G}}$  and $(\bar{\mathcal{G}}_1)_{2\times 2}$ terms of $\bar{G}$. \vspace{1mm}
\end{minipage}}\right.
\end{equation}

\item[(ii)]  in the non-oscillatory case (\ref{NO})
\begin{equation}\label{I-II}
\left[~\mbox{\begin{minipage}{15cm} There exists $y_0\in W^{2,2}(\omega,
    \mathbb{R}^3)$ satisfying (\ref{iso}) and such that:
$$\Pi_{y_0}(x') = -\frac{1}{\sqrt{\bar{\mathcal{G}}^{33}}}
\left[\begin{array}{cc}\Gamma_{11}^3 & \Gamma_{12}^3\\ \Gamma_{12}^3 &
  \Gamma_{22}^3\end{array}\right] (x',0) \qquad \mbox{for all } x'\in\omega, $$
where $\Pi_{y_0}$ is the second fundamental form of the surface $y_{0}(\omega)$ and
$\{\Gamma_{jk}^i\}$ are the Christoffel symbols of the metric $G$.
\end{minipage}}\right.
\end{equation}
\end{itemize}
The isometric immersion $y_0$ in (\ref{I-II}) is automatically smooth
(up to the boundary) and it is unique up to rigid motions. Further, on
a simply connected midplate $\omega$, condition (\ref{I-II}) is equivalent to:
\begin{equation}\label{I-II2}
\left[~\mbox{\begin{minipage}{15cm} The following Riemann curvatures
      of the metric $G$ vanish on $\omega\times \{0\}$:
$$R_{1212}(x',0) = R_{1213}(x', 0) = R_{1223}(x',0) = 0\qquad
\mbox{for all } x'\in \omega.$$
\end{minipage}}\right.
\end{equation}
Above, the Riemann curvatures of a given metric $G$ are:
$$R_{iklm}= \frac{1}{2}\Big(\partial_{kl}G_{im} + \partial_{im}G_{kl}
- \partial_{km}G_{il} - \partial_{il}G_{km}\Big) + 
G_{np}\big(\Gamma_{kl}^n \Gamma_{im}^p - \Gamma_{km}^n \Gamma_{il}^p\big).$$
\end{thm}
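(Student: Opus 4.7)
My plan is to reduce the scaling statement to the question of when the $\Gamma$-limit $\mathcal{I}_2^O$ has a zero, and then analyse that zero by splitting the functional into a bending part plus a purely metric excess distance, finally translating the bending=0 condition via Gauss-Codazzi into a curvature condition on $G$.

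First, by Corollary \ref{limit_scaled} the condition $\lim_{h\to 0}h^{-2}\inf\mathcal{E}^h = 0$ is equivalent to the existence of $y\in W^{2,2}(\omega,\mathbb{R}^3)$ satisfying (\ref{iso}) and $\mathcal{I}_2^O(y) = 0$. In view of the representation given in Theorem \ref{Gliminf}, $\mathcal{I}_2^O(y)$ is the sum of two nonnegative contributions, so both must vanish separately. The second one vanishes iff $(\mathcal{G}_1)_{2\times 2}\in\mathbb{E}_1$; combined with the standing hypothesis $\int_{-1/2}^{1/2}\mathcal{G}_1\,\rmd t = 0$ and the explicit formula for $\mathbb{P}_1$ in Lemma \ref{proj}, this forces $(\mathcal{G}_1)_{2\times 2}(x',x_3) = x_3(\bar{\mathcal{G}}_1)_{2\times 2}(x')$, which is precisely the first part of (\ref{I-IIO}). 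Once this equality holds, the remaining bending part of $\mathcal{I}_2^O$ coincides term-by-term with $\mathcal{I}_2$ written for the effective metric $\bar{G}$ of (\ref{EO}), since $\partial_3\bar G(x',0) = \bar{\mathcal{G}}_1(x')$. Thus case (O) collapses to case (NO) applied to $\bar G$.

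For the non-oscillatory case I apply Lemma \ref{lem_term} to rewrite the integrand of $\mathcal{I}_2(y)$ as a linear combination of the shape operator $\Pi_y$ of the image surface $y(\omega)\subset\mathbb{R}^3$ and the Christoffel correction $\Gamma^3_{ij}(x',0)$ of $G$. Since $\mathcal{Q}_2$ is positive definite on symmetric matrices, $\mathcal{I}_2(y_0) = 0$ is then equivalent to the pointwise matrix identity in (\ref{I-II}). The right-hand side of (\ref{I-II}) is smooth in $x'$, so standard bootstrap from $W^{2,2}$ via the Gauss-Weingarten system applied to an isometric immersion with prescribed first and second fundamental forms yields smoothness of $y_0$ up to $\partial\omega$; uniqueness modulo rigid motions of $\mathbb{R}^3$ follows from the rigidity part of the fundamental theorem of surface theory.

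The remaining, and most computational, step is the equivalence (\ref{I-II}) $\Leftrightarrow$ (\ref{I-II2}) on a simply connected $\omega$. Here I appeal to the fundamental theorem of surface theory: given the candidate pair consisting of the first fundamental form $\bar{\mathcal{G}}_{2\times 2}$ and the prescribed second fundamental form $\Pi$ on the right-hand side of (\ref{I-II}), an isometric immersion $y_0$ realising them exists on a simply connected $\omega$ if and only if the Gauss equation and the two Codazzi-Mainardi equations hold for $(\bar{\mathcal{G}}_{2\times 2},\Pi)$. The plan is to substitute the explicit formula for $\Pi$ into these three compatibility equations and rewrite everything purely in terms of $G_{ij}$, $\partial_k G_{ij}$, $\partial_{kl}G_{ij}$ and $\Gamma^i_{jk}$ of $G$ evaluated at $x_3 = 0$. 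After this algebraic manipulation, the Gauss equation (involving the intrinsic curvature of $\bar{\mathcal{G}}_{2\times 2}$ versus $\det\Pi$) collapses exactly to $R_{1212}(x',0) = 0$, and the two Codazzi-Mainardi equations collapse respectively to $R_{1213}(x',0) = 0$ and $R_{1223}(x',0) = 0$. The converse direction is obtained by reversing the same chain of identities. The main obstacle is notational rather than conceptual: organising the Gauss-Codazzi-to-ambient-curvature translation cleanly, which is best facilitated by choosing a local frame so that the $G^{33}$ factors, the mixed Christoffel contributions, and the partial derivatives of the slice metric $\bar{\mathcal{G}}_{2\times 2}$ combine transparently with the definition of $R_{iklm}$ given at the end of the statement.
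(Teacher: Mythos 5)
Your proposal follows the paper's own proof essentially step by step: reduce to $\min\mathcal{I}_2^O=0$ via Corollary~\ref{limit_scaled}, split $\mathcal{I}_2^O$ into the bending term plus the nonnegative excess $\frac{1}{8}\dist^2_{\mathcal{Q}_2}((\mathcal{G}_1)_{2\times 2},\mathbb{E}_1)$ and observe both must vanish, use Lemma~\ref{lem_term} and positive-definiteness of $\mathcal{Q}_2$ to convert $\mathcal{I}_2(y_0)=0$ into (\ref{I-II}), bootstrap smoothness from the Gauss--Weingarten/continuity system, invoke uniqueness of an immersion with prescribed second fundamental form, and finally pass to curvatures via Gauss--Codazzi--Mainardi. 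That is the paper's argument.

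One small imprecision worth correcting. You state that the two Codazzi--Mainardi equations collapse \emph{respectively} to $R_{1213}(x',0)=0$ and $R_{1223}(x',0)=0$. In fact they yield $R_{121}^3(x',0)=0$ and $R_{221}^3(x',0)=0$, and after raising the index with $\bar{\mathcal{G}}^{-1}$ these read
\begin{equation*}
\bar{\mathcal{G}}^{32}R_{1212}+\bar{\mathcal{G}}^{33}R_{1213}=0,
\qquad
-\bar{\mathcal{G}}^{31}R_{1212}+\bar{\mathcal{G}}^{33}R_{1223}=0,
\end{equation*}
which contain a cross-term in $R_{1212}$ whenever the midplate metric has nontrivial shear components $\bar{\mathcal{G}}_{13},\bar{\mathcal{G}}_{23}$. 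It is only in combination with the Gauss equation $R_{1212}=0$ (and using $\bar{\mathcal{G}}^{33}>0$) that the system becomes equivalent to the separate vanishing $R_{1212}=R_{1213}=R_{1223}=0$. The paper phrases this carefully: the Codazzi--Mainardi equations give $R_{121}^3=R_{221}^3=0$, and it is the \emph{simultaneous} vanishing of $R_{121}^3, R_{221}^3, R_{1212}$ that is shown equivalent to (\ref{I-II2}). Your conclusion is unaffected, but the ``respectively'' claim would be false in general and should be replaced by this joint argument.
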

\begin{proof}
By Corollary \ref{limit_scaled}, it suffices to determine the
equivalent conditions for $\min \mathcal{I}_2^O=0$ and $\min
\mathcal{I}_2=0$. In case (\ref{O}), the linearity of $x_3\mapsto \mathcal{G}_1(x', x_3)_{2\times 2}$
is immediate, while condition (\ref{I-II})
follows in both cases (\ref{O}) and (\ref{NO})  by Lemma
\ref{lem_term}. Note that the Christoffel symbols $\{\Gamma_{jk}^i\}$
depend only on $\bar{\mathcal{G}}$ and $\partial_3G(x',0)_{2\times 2}$ in the Taylor expansion
of $G$. This completes the proof of (i) and (ii).

\smallskip

Regularity of $y_0$ is an easy consequence, via the bootstrap argument, of the continuity equation:
\begin{equation}\label{continuity}
\partial_{ij}y_0 = \sum_{m=1}^2\gamma_{ij}^m\partial_my_0 -
(\Pi_{y_0})_{ij}\vec\nu_0 \qquad \mbox{for } i,j=1, 2,
\end{equation}
where $\{\gamma_{ij}^m\}_{i,j,m=1\ldots 2}$ denote the Christoffel
symbols of $\bar{\mathcal{G}}_{2\times 2}$ on $\omega$. Uniqueness of
$y_0$ is a consequence of (\ref{I-II}), due to uniqueness of isometric immersion with prescribed
second fundamental form.

\smallskip

To show (\ref{I-II2}), we argue as in the proof of \cite[Theorem
5.5]{BLS}. The compatibility of $\bar G_{2\times 2}$ and $\Pi_{y_0}$ is equivalent to the satisfaction of the related
Gauss-Codazzi-Mainardi equations. By an explicit calculation, we see
that the two Codazzi-Mainardi equations become:
\begin{equation*}
\begin{split}
\big(\partial_2\Gamma_{11}^3 - \partial_1 &\Gamma_{12}^3\big) - \frac{1}{2}
\left(\frac{\partial_2G^{33}}{G^{33}} \Gamma_{11}^3
- \frac{\partial_1G^{33}}{G^{33}} \Gamma_{12}^3\right) 
+ \frac{1}{G^{33}}G^{m3}\big(\Gamma_{2m}^3\Gamma_{11}^3-\Gamma_{1m}^3\Gamma_{12}^3\big) \\ & 
\qquad\qquad = \left( \sum_{m=1}^2\Gamma^3_{1m}\Gamma^m_{12} -
\sum_{m=1}^2\Gamma^3_{2m}\Gamma^m_{11} \right) +
\frac{G^{32}}{G^{33}}(\Gamma^3_{11}\Gamma^3_{22} -
(\Gamma_{12}^3)^2),\\
\big(\partial_2\Gamma_{12}^3 - \partial_1 &\Gamma_{22}^3\big) - \frac{1}{2}
\left(\frac{\partial_2G^{33}}{G^{33}} \Gamma_{12}^3
- \frac{\partial_1G^{33}}{G^{33}} \Gamma_{22}^3\right) 
+ \frac{1}{G^{33}} G^{m3}\big(\Gamma_{2m}^3\Gamma_{12}^3-\Gamma_{1m}^3\Gamma_{22}^3\big)\\ & 
\qquad\qquad = \left( \sum_{m=1}^2\Gamma^3_{1m}\Gamma^m_{22} -
\sum_{m=1}^2\Gamma^3_{2m}\Gamma^m_{12} \right) -
\frac{G^{31}}{G^{33}}(\Gamma^3_{11}\Gamma^3_{22} - (\Gamma_{12}^3)^2),
\end{split}
\end{equation*}
and are equivalent to $R_{121}^3=R_{221}^3=0$ on $\omega\times\{0\}$.
The Gauss equation is, in turn, equivalent to $R_{1212}=0$ exactly as
in \cite{BLS}. The simultaneous vanishing of $R_{121}^3, R_{221}^3,
R_{1212}$ is equivalent with the vanishing of $R_{1212}, R_{1213}$ and
$R_{1223}$, which proves the claim in (\ref{I-II2}).
\end{proof}

\section{Coercivity of the limiting energy $\mathcal{I}_2$}\label{sec_coer}

In this section we quantify the statement in Theorem
\ref{Kirchhoff_optimal} and prove that when either of $\mathcal{I}_2$
or $\mathcal{I}_2^O$ can be minimized to 
zero, the effective energy $\mathcal{I}_2(y)$ measures then the distance of
a given isometric immersion $y$ from the kernel:
$\mbox{ker}\;\mathcal{I}_2 = \big\{Qy_0 + d; ~ Q\in SO(3),~ d\in\mathbb{R}^3\big\}$.

\smallskip

Assume that the set of $W^{2,2}(\omega,\mathbb{R}^3)$ isometric
immersions $y$ of $\bar{\mathcal{G}}_{2\times 2}$ is nonempty, which in view of
Theorems \ref{Gliminf} and \ref{Glimsup} is equivalent to:
$\inf\mathcal{E}^h\leq Ch^2$. For each such $y$, the continuity equation
(\ref{continuity}) combined with Lemma \ref{lem_term} gives the
following formula, valid for all $i,j=1, 2$:
\begin{equation}\label{zero}
\partial_{ij}y = \sum_{m=1, 2}\gamma_{ij}^m\partial_m y -
\sqrt{\bar{\mathcal{G}}^{33}}\Big(\big((\nabla y)^{\trsp}\nabla\vec b\big)_{\sym}
- \frac{1}{2}\partial_3G(x',0)_{2\times 2}\Big)_{ij}\vec \nu +
\frac{\Gamma_{ij}^3}{\sqrt{\bar{\mathcal{G}}^{33}}}\vec \nu \quad \mbox{on } \omega.
\end{equation}
Another consequence of (\ref{continuity}) is:
$$|\nabla^2 y|^2 = |\Pi_y|^2 + \sum_{i,j=1, 2} \big\langle \bar
{\mathcal{G}}_{2\times 2} : [\gamma_{ij}^1,~\gamma_{ij}^2]^{\otimes 2}\big\rangle \quad \mbox{on } \omega. $$
By Lemma \ref{lem_term} and since $|\nabla y|^2 =
\mathrm{trace} \;\bar{\mathcal{G}}_{2\times 2}$, this yields the bound:
\begin{equation}\label{jeden}
\big\|y-\fint_{\omega} y\big\|^2_{W^{2,2}(\omega,\mathbb{R}^3)}\leq C \big(\mathcal{I}_2(y) + 1\big),
\end{equation}
where $C$ is a constant independent of $y$.  Clearly, when condition
(\ref{I-II2}) does not hold, so that $\min \mathcal{I}_2>0$, the right
hand side $C \big(\mathcal{I}_2(y) + 1\big) $ above may be replaced by $C\mathcal{I}_2(y)$. On the other
hand, in presence of (\ref{I-II2}), the  bound (\ref{jeden}) can be
refined to the following coercivity result:

\begin{thm}\label{coercive2}
Assume the curvature condition (\ref{I-II2}) on a metric $G$ as in
(\ref{NO}), and let $y_0$ be the unique (up to rigid motions
in $\mathbb{R}^3$) isometric immersion
of $\bar{\mathcal{G}}_{2\times 2}$ satisfying (\ref{I-II}). Then, for all $y\in W^{2,2}(\omega,
\mathbb{R}^3)$ such that $(\nabla y)^{\trsp}\nabla y = \bar{\mathcal{G}}_{2\times 2}$, there holds:
\begin{equation}\label{coer}
\dist^2_{W^{2,2}(\omega, \mathbb{R}^3)} \Big(y, ~ \big\{Ry_0 + c; ~
R\in SO(3),~ c\in\mathbb{R}^3\big\}\Big) \leq C \mathcal{I}_2(y),
\end{equation}
with a constant $C>0$ that depends on $G,\omega$ and $W$ but is independent of $y$.
\end{thm}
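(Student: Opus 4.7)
The strategy is to reduce (\ref{coer}) to a Poincaré-type inequality for the rotation field that relates the orthonormal-up-to-scaling frames of $y$ and $y_0$. First I would apply Lemma \ref{lem_term} to both immersions and subtract; since the Christoffel term on the right-hand side of (\ref{tensor1}) depends only on $G$ and hence is common to $y$ and $y_0$, the integrand of $\mathcal{I}_2(y)$ becomes exactly $\frac{1}{\sqrt{\bar{\mathcal{G}}^{33}}}(\Pi_y - \Pi_{y_0})$. Invoking the positive definiteness of $\mathcal{Q}_2(x',\cdot)$ on $\mathbb{R}^{2\times 2}_{\sym}$—which follows from assumption (iii) on $W$ together with $\mathcal{Q}_3(F)\geq c|F_{\sym}|^2$ and the definition (\ref{Qform})–(\ref{cdef})—combined with the uniform positive lower bound of $\bar{\mathcal{G}}^{33}$ on the compact $\bar\omega$, this yields
\begin{equation*}
c\,\|\Pi_y - \Pi_{y_0}\|_{L^2(\omega)}^2 \;\leq\; \mathcal{I}_2(y).
\end{equation*}

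Second, I would introduce the frames $F=[\partial_1 y,\partial_2 y,\vec\nu]$ and $F_0=[\partial_1 y_0,\partial_2 y_0,\vec\nu_0]$ in $W^{1,2}\cap L^\infty(\omega,\mathbb{R}^{3\times 3})$. Since $y$ and $y_0$ share the same first fundamental form, $F^{\trsp}F = F_0^{\trsp}F_0 = \mathrm{diag}(\bar{\mathcal{G}}_{2\times 2},1)$ pointwise; choosing consistent orientations of $\vec\nu,\vec\nu_0$ on the connected midplate $\omega$, the field
\begin{equation*}
R(x'):=F(x')F_0(x')^{-1}
\end{equation*}
takes values in $SO(3)$. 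The continuity equation (\ref{continuity}) together with the Weingarten relation $\partial_i\vec\nu = -(\Pi_y)_{in}(\bar{\mathcal{G}}_{2\times 2})^{nm}\partial_m y$ produce $\partial_i F = F\,L_i(\gamma,\bar{\mathcal{G}}_{2\times 2},\Pi_y)$ and $\partial_i F_0 = F_0\,L_i(\gamma,\bar{\mathcal{G}}_{2\times 2},\Pi_{y_0})$, with the same structural matrix $L_i$ that is affine in its last argument, whose coefficients involve only the shared first fundamental form. A direct calculation then gives
\begin{equation*}
\partial_i R \;=\; R\,F_0\big[L_i(\Pi_y)-L_i(\Pi_{y_0})\big]F_0^{-1},
\end{equation*}
so that $|\nabla R|\leq C\,|\Pi_y-\Pi_{y_0}|$ pointwise, and hence $\|\nabla R\|_{L^2}^2 \leq C\mathcal{I}_2(y)$.

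Third, a Poincaré–Wirtinger argument on $SO(3)$ produces $R_*\in SO(3)$ with $\|R-R_*\|_{L^2}^2 \leq C\mathcal{I}_2(y)$: the standard Poincaré inequality applied to $\bar R = \fint_\omega R \in \mathbb{R}^{3\times 3}$ gives $\|R-\bar R\|_{L^2}^2 \leq C_\omega\|\nabla R\|_{L^2}^2$, and since $R(x')\in SO(3)$ a.e. one has $\mathrm{dist}(\bar R,SO(3))^2 \leq \fint_\omega |\bar R - R|^2 \leq C_\omega|\omega|^{-1}\|\nabla R\|_{L^2}^2$, so picking $R_*\in SO(3)$ realizing the distance from $\bar R$ closes the step. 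Finally, the identity $\nabla y - R_*\nabla y_0 = (R-R_*)[\partial_1 y_0,\partial_2 y_0]$ controls $\|\nabla(y-R_*y_0)\|_{L^2}$, and the difference of the continuity equations for $y$ and $y_0$ expresses $\partial_{ij}(y-R_*y_0)$ as a linear combination of $\partial_m(y-R_*y_0)$, $(\Pi_y-\Pi_{y_0})_{ij}\vec\nu$ and $(\Pi_{y_0})_{ij}(R-R_*)\vec\nu_0$, each $L^2$-controlled by $\mathcal{I}_2(y)^{1/2}$. Choosing $c_*=\fint_\omega(y-R_*y_0)$ and invoking Poincaré for the $L^2$ norm assembles all three Sobolev components and proves (\ref{coer}). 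The main step where care is required is the algebraic derivation of the formula for $\partial_i R$: the crucial point is that the \emph{metric-only} part of $L_i$ is shared between $y$ and $y_0$, so that only the $\Pi$-difference survives upon subtraction—without this cancellation the Poincaré step would only control $R$ up to an $O(1)$ term independent of $\mathcal{I}_2(y)$.
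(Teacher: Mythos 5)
Your proof is correct, and it takes a genuinely different (and arguably more elementary) route than the paper's. The paper proves Theorem \ref{coercive2} by first reducing to Lemma \ref{weak_coercive}, which is established via the Friesecke--James--M\"uller geometric rigidity estimate \cite{FJM02}: one extends $y$ and $y_0$ to three-dimensional maps $u(x',x_3)=y(x')+x_3\vec b(x')$ and $u_0(x',x_3)=y_0(x')+x_3\vec b_0(x')$ on the thin slab, forms $v=u\circ u_0^{-1}$, applies rigidity to $v$ to obtain a single rotation $R$, and then pulls the estimate back by an explicit change of variables that reveals the quantity $\big((\nabla y)^{\trsp}\nabla\vec b\big)_{\sym}-\big((\nabla y_0)^{\trsp}\nabla\vec b_0\big)_{\sym}$ as the controlling $L^2$ datum. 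Because $y_0$ need not be globally injective, the paper is forced into a patching argument (step 2 of Lemma \ref{weak_coercive}) to glue the rotations obtained on subdomains where $u_0$ is a diffeomorphism. Your argument replaces all of this with the observation that the frame field $R=FF_0^{-1}$ is \emph{pointwise} in $SO(3)$ (since $F^{\trsp}F=F_0^{\trsp}F_0=\mathrm{diag}(\bar{\mathcal{G}}_{2\times 2},1)$ and $\det F,\det F_0>0$ by the definition of $\vec\nu$), and that the continuity equation (\ref{continuity}) plus Weingarten give $\partial_iF=FL_i(\Pi_y)$, $\partial_iF_0=F_0L_i(\Pi_{y_0})$ with the same affine-in-$\Pi$ structural matrix $L_i$ built solely from the shared first fundamental form, so that $|\nabla R|\leq C|\Pi_y-\Pi_{y_0}|$ pointwise. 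A standard Poincar\'e--Wirtinger argument on the connected Lipschitz domain $\omega$ then produces the constant $R_*\in SO(3)$ without any recourse to \cite{FJM02} or to patching. The remaining steps — the positive-definiteness of $\mathcal{Q}_2(x',\cdot)$ (uniform on $\bar\omega$), the cancellation of the Christoffel term in Lemma \ref{lem_term}, the identity $\nabla y=R\nabla y_0$, and the $L^2$-control of $\partial_{ij}(y-R_*y_0)$ through (\ref{continuity}) — are all sound. What your route buys is the elimination of the 3D rigidity theorem and the injectivity bookkeeping; what the paper's route buys, by contrast, is the intermediate Lemma \ref{weak_coercive} in a form that is reused via its 3D machinery in the higher-order analysis (Lemma \ref{approx}, Corollary \ref{cor_approx}).
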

\begin{proof}
Without loss of generality, we set $\fint_\omega y =\fint_\omega y_0=0$. 
For any $R\in SO(3)$, identity (\ref{zero}) implies:
\begin{equation*}
\begin{split}
\int_\omega\big|\nabla^2y - \nabla^2(R y_0)\big|^2 \,\rmd x' & \leq
C\Big(\int_\omega \big|\nabla y - \nabla (Ry_0)\big|^2 \,\rmd x' \\ &
\qquad\quad  + \int_\omega \big|\big((\nabla y)^{\trsp}\nabla \vec
b\big)_{\sym}-\frac{1}{2}\partial_3G(x',0)_{2\times 2}\big|^2 \,\rmd x' +
\int_\omega |\vec \nu - R\vec\nu_0|^2 \,\rmd x'\Big) \\ & \leq
C \Big( \int_\omega \big|\nabla y - \nabla (Ry_0)\big|^2 \,\rmd x'  +
\int_\omega \big|\big((\nabla y)^{\trsp}\nabla \vec 
b\big)_{\sym}-\frac{1}{2}\partial_3G(x',0)_{2\times 2}\big|^2 \,\rmd x'\Big),
\end{split}
\end{equation*}
where we used $\mathcal{I}_2(Ry_0) = 0$ and the fact that $\int_\omega
|\vec \nu - R\vec\nu_0|^2 \,\rmd x'\leq C\int_\omega \big|\nabla y -
\nabla (Ry_0)\big|^2 \,\rmd x'$ following, in particular, from
$|\partial_1y\times \partial_2y| = |\partial_1(Ry_0)\times \partial_2(Ry_0)| =
\sqrt{\det\bar{\mathcal{G}}_{2\times 2}}$. 
Also, the non-degeneracy of quadratic forms $\mathcal{Q}_2(x', \cdot)$
in (\ref{Qform}), implies the uniform bound:
$$\int_\omega \big|\big((\nabla y)^{\trsp}\nabla \vec
b\big)_{\sym}-\frac{1}{2}\partial_3G(x',0)_{2\times 2}\big|^2 \,\rmd x'\leq C\mathcal{I}_2(y).$$
Taking $R\in SO(3)$ as in Lemma \ref{weak_coercive}
below, (\ref{coer}) directly follows in view of  (\ref{weak_bd}).
\end{proof}

The next weak coercivity estimate has been the essential part of Theorem \ref{coercive2}:

\begin{lemma}\label{weak_coercive}
Let $y$ and $y_0$ be as in Theorem \ref{coercive2}. Then there exists $R\in SO(3)$ such that:
\begin{equation}\label{weak_bd}
\int_\omega|\nabla y -R\nabla y_0|^2 \,\rmd x'\leq C \int_\omega \big|\big((\nabla y)^{\trsp}\nabla \vec
b\big)_{\sym}-\frac{1}{2}\partial_3G(x',0)_{2\times 2}\big|^2 \,\rmd x',
\end{equation}
with a constant $C>0$ that depends on $G,\omega$ but it is independent of $y$.
\end{lemma}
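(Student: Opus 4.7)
The plan is to realize $\nabla y$ pointwise as the image of $\nabla y_0$ under a spatially varying rotation $R(x')\in SO(3)$, to bound $\nabla R$ by $\Pi_y-\Pi_{y_0}$, and finally to approximate $R$ in $L^2$ by a single constant rotation via Poincar\'e. Introduce the Darboux frames $Q=[\partial_1y,\partial_2y,\vec\nu_y]$ and $Q_0=[\partial_1y_0,\partial_2y_0,\vec\nu_{y_0}]$. Both satisfy $Q^{\trsp}Q=Q_0^{\trsp}Q_0=\operatorname{diag}(\bar{\mathcal{G}}_{2\times 2},1)$, so $R(x')\doteq Q(x')Q_0(x')^{-1}$ belongs to $SO(3)$ a.e., acts as $R\partial_iy_0=\partial_iy$ for $i=1,2$ and $R\vec\nu_{y_0}=\vec\nu_y$ (the latter because rotations preserve cross products), and lies in $W^{1,2}(\omega,SO(3))$ by the regularity of $y$ and $y_0$.

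Differentiating $R\partial_iy_0=\partial_iy$ and using the continuity equation (\ref{zero}) for both $y$ and $y_0$ (which share the same intrinsic Christoffel symbols $\gamma^m_{ij}$, as the first fundamental forms agree), together with Lemma \ref{lem_term} and condition (\ref{I-II}) for $y_0$ to identify the normal coefficients, yields
\begin{equation*}
(\partial_jR)\partial_iy_0=-(\Pi_y-\Pi_{y_0})_{ij}\vec\nu_y, \qquad i,j=1,2.
\end{equation*}
Differentiating $R\vec\nu_{y_0}=\vec\nu_y$ and invoking Weingarten's formula $\partial_j\vec\nu=-(\Pi)_{jm}(\bar{\mathcal{G}}_{2\times 2})^{mn}\partial_ny$ for both $y$ and $y_0$ further gives
\begin{equation*}
(\partial_jR)\vec\nu_{y_0}=-(\Pi_y-\Pi_{y_0})_{jm}(\bar{\mathcal{G}}_{2\times 2})^{mn}\partial_ny.
\end{equation*}
Since $\{\partial_1y_0,\partial_2y_0,\vec\nu_{y_0}\}$ is a basis of $\mathbb{R}^3$ with uniformly bounded inverse (depending only on $\bar{\mathcal{G}}$) and $|\partial_ny|$ is bounded, these two identities yield the pointwise estimate $|\nabla R(x')|\le C|\Pi_y(x')-\Pi_{y_0}(x')|$, hence $\|\nabla R\|_{L^2(\omega)}^2\le C\|\Pi_y-\Pi_{y_0}\|_{L^2(\omega)}^2$.

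Set $\bar R=\fint_\omega R\,\rmd x'$ and pick $R_*\in SO(3)$ realizing $\dist(\bar R,SO(3))$. Poincar\'e yields $\|R-\bar R\|_{L^2}^2\le C(\omega)\|\nabla R\|_{L^2}^2$, and since $R(x')\in SO(3)$ a.e., $\dist(\bar R,SO(3))\le|R(x')-\bar R|$ pointwise, so $|\omega|\dist^2(\bar R,SO(3))\le\|R-\bar R\|_{L^2}^2$. Combining with the pointwise identity $\nabla y=R\nabla y_0$ and the $L^\infty$ bound on $\nabla y_0$ inherited from smoothness of $\bar{\mathcal{G}}$,
\begin{equation*}
\|\nabla y-R_*\nabla y_0\|_{L^2}^2=\|(R-R_*)\nabla y_0\|_{L^2}^2\le C\|R-R_*\|_{L^2}^2\le C\|\Pi_y-\Pi_{y_0}\|_{L^2}^2.
\end{equation*}
Lemma \ref{lem_term} and (\ref{I-II}) then identify $\Pi_y-\Pi_{y_0}$ with $\sqrt{\bar{\mathcal{G}}^{33}}$ times the integrand on the right of (\ref{weak_bd}), giving the claim with a constant depending only on $\bar{\mathcal{G}}$ and $\omega$.

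The one delicate step is the projection: $SO(3)$ is not convex, so the mean $\bar R$ need not lie in $SO(3)$, and a fixed nearby $R_*\in SO(3)$ must be produced without any smallness hypothesis on $\|\nabla R\|$. This is handled geometrically via $\dist(\bar R,SO(3))\le|R(x')-\bar R|$ a.e., which by Jensen makes the $L^2$-distance from $SO(3)$ comparable to the $L^2$-oscillation of $R$, closing the loop.
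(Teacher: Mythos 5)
Your proof is correct, but it departs substantially from the paper's route and is in fact more intrinsic and elementary. The paper extends $y,y_0$ to the three-dimensional deformations $u=y+x_3\vec b$, $u_0=y_0+x_3\vec b_0$ on $\Omega^h$, considers the composition $v=u\circ u_0^{-1}$ on pieces where $u_0$ is a diffeomorphism, invokes the geometric rigidity estimate of \cite{FJM02}, and then translates both sides of that inequality back by a change of variables; an extra patching step handles the case $N>1$ when $u_0$ is not globally one-to-one. You instead construct the pointwise rotation field $R=QQ_0^{-1}$ from the Darboux frames, observe that the Gauss and Weingarten relations express $\nabla R$ in terms of $\Pi_y-\Pi_{y_0}$ (the $\gamma^m_{ij}$-terms cancel because the intrinsic metric is the same), and then pass from the $SO(3)$-valued field to a single constant rotation via Poincar\'e plus the pointwise bound $\dist(\bar R,SO(3))\le|R(x')-\bar R|$, which makes the nonconvexity of $SO(3)$ harmless without any smallness hypothesis. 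Two minor inaccuracies that do not affect the argument: the sign in your Weingarten formula is off (with the paper's convention $\Pi_y=(\nabla y)^{\trsp}\nabla\vec\nu$ one gets $\partial_j\vec\nu=(\bar{\mathcal G}_{2\times 2})^{mn}(\Pi_y)_{jm}\,\partial_n y$ without the minus), and the second identity for $(\partial_jR)\vec\nu_{y_0}$ is actually redundant, since $R^{\trsp}\partial_jR\in so(3)$ is already determined by its action on $\partial_1 y_0,\partial_2 y_0$ --- but using it does no harm. Your approach also makes the subsequent Remark on quantitative uniqueness of immersions with prescribed second fundamental form immediate, since that is precisely what you establish in the first part, whereas the paper obtains it as a byproduct of the same three-dimensional/FJM argument. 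The paper's method has the advantage of reusing machinery (change of variables plus FJM rigidity) that reappears in Lemma \ref{approx} for genuine three-dimensional deformations $u^h$, where no analogue of your purely two-dimensional Darboux-frame construction is available.
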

\begin{proof}
Consider the natural extensions $u$ and $u_0$ of $y$ and $y_0$, namely:
$$u(x', x_3) = y(x') + x_3\vec b(x'), \quad u_0(x', x_3) = y_0(x') +
x_3\vec b_0(x') \qquad \mbox{for all } (x', x_3)\in \Omega^h.$$
Clearly, $u\in W^{1,2}(\Omega^h, \mathbb{R}^3)$ and
$u_0\in\mathcal{C}^1(\bar\Omega^h, \mathbb{R}^3)$ satisfies
$\det\nabla u_0>0$ for $h$ sufficiently small. Write: 
$$\omega=\bigcup_{k=1}^N \omega_k,\qquad \Omega^h=\bigcup_{k=1}^N \Omega^h_k$$
as the union of $N\geq 1$ open, bounded, connected domains
with Lipschitz boundary, such that on each $\{\Omega_k^h=\omega_k\times
(-\frac{h}{2}, \frac{h}{2})\}_{k=1}^N$, the deformation ${u_0}_{\mid {\Omega_k^h}}$ is a
$\mathcal{C}^1$ diffeomorphism onto its image $\mathcal{U}_k^h\subset\mathbb{R}^3$.

\smallskip

{\bf 1.} We first prove (\ref{weak_bd}) under the assumption
$N=1$. Call $v=u\circ u_0^{-1}\in W^{1,2}(\mathcal{U}^h,
\mathbb{R}^3)$ and apply the geometric rigidity estimate
\cite{FJM02} for the existence of $R\in SO(3)$ satisfying: 
\begin{equation}\label{dwa}
\int_{\mathcal{U}^h}|\nabla v-R|^2 \,\rmd z \leq
C\int_{\mathcal{U}^h}\dist^2\big(\nabla v, SO(3)\big) \,\rmd z,
\end{equation}
with a constant $C$ depending on a particular choice of $h$ (and
ultimately $k$, when $N>1$), but
independent of $v$. Since $\nabla v(u_0(x)) =
\nabla u(x) \big(\nabla u_0(x)\big)^{-1}$ for all $x\in\Omega^h$, we get:
\begin{equation}\label{trzy}
\begin{split}
\int_{\mathcal{U}^h}|\nabla v-R|^2 \,\rmd z &= \int_{\Omega^h}(\det\nabla u_0)
\big|(\nabla u-R\nabla u_0)(\nabla u_0)^{-1}\big|^2 \,\rmd x \geq
C\int_{\Omega^h}|\nabla u- R\nabla u_0|^2 \,\rmd x \\ & = C\int_{\Omega^h}
\Big|\big[\partial_1 y, ~ \partial_2y,~ \vec b\big] - R\big[\partial_1 y_0,
~ \partial_2y_0,~ \vec b_0\big] \Big|^2 + x_3^2 |\nabla \vec b -
R\nabla \vec b_0|^2 \,\rmd x\\ & \geq Ch \int_{\omega}|\nabla y - R\nabla y_0|^2 \,\rmd x'.
\end{split}
\end{equation}
Likewise, the change of variables in the right hand side of (\ref{dwa}) gives:
\begin{equation}\label{cztery}
\int_{\mathcal{U}^h}\dist^2\big(\nabla v, SO(3)\big) \,\rmd z\leq C
\int_{\Omega^h}\dist^2\big((\nabla u)(\nabla u_0)^{-1}, SO(3)\big) \,\rmd x.
\end{equation}

\smallskip

Since $(\nabla u)^{\trsp}\nabla u(x',0) = (\nabla u_0)^{\trsp}\nabla u_0(x',0) = \bar{\mathcal{G}}(x')$,
by polar decomposition it follows that: $\nabla u(x',0)=Q(x')=\bar R\bar{\mathcal{G}}^{1/2}$ and $\nabla
u_0(x',0)=Q_0(x')=\bar R_0\bar{\mathcal{G}}^{1/2}$ for some $\bar R,
\bar R_0\in SO(3)$. The notation $Q$, $Q_0$ is consistent with that
introduced in (\ref{Qyb}). Observe further:
\begin{equation*}
\begin{split}
\nabla u(x', x_3) & = Q+x_3 \big[\partial_1 \vec b, ~\partial_2\vec b,~ 0\big] 
=\bar R\bar{\mathcal{G}}^{1/2}\Big(Id_3+x_3
\bar{\mathcal{G}}^{-1}Q^{\trsp}\big[\partial_1 \vec b, ~\partial_2\vec b,~ 0\big]\Big) \\ & 
= \bar R\bar{\mathcal{G}}^{1/2}\Big(Id_3+x_3 \bar{\mathcal{G}}^{-1}\Big(\big((\nabla y)^{\trsp}\nabla
    \vec b\big)^* + e_3\otimes \big[\nabla \vec b, ~ 0\big]^{\trsp} \vec b\Big)\Big),
\end{split}
\end{equation*}
and similarly:
$$\nabla u_0(x', x_3) =  \bar R_0\bar{\mathcal{G}}^{1/2}\Big(Id_3+x_3
\bar{\mathcal{G}}^{-1}\Big(\big((\nabla y_0)^{\trsp}\nabla
    \vec b_0\big)^* + e_3\otimes \big[\nabla \vec b_0, ~ 0\big]^{\trsp} \vec b_0 \Big)\Big).$$
Consequently, the integrand in the right hand side of (\ref{cztery}) becomes:
\begin{equation}\label{piec}
\begin{split}
(\nabla& u)(\nabla u_0)^{-1} \\ & = \bar
R\bar{\mathcal{G}}^{1/2}\bigg(
Id_3 + x_3 \bar{\mathcal{G}}^{-1} S \Big(Id_3+x_3\bar{\mathcal{G}}^{-1}\big((\nabla y_0)^{\trsp}\nabla
    \vec b_0)^* + e_3\otimes \big[\nabla \vec b_0, ~0\big]^{\trsp} \vec b_0
    \Big)^{-1}\bigg)\bar{\mathcal{G}}^{-1/2}\bar R_0^{\trsp}, 
\end{split}
\end{equation}
where:
$$S= \Big((\nabla y)^{\trsp}\nabla 
    \vec b - (\nabla y_0)^{\trsp}\nabla \vec b_0\Big)^* + e_3\otimes
    \big[\nabla \vec b, ~ 0\big]^{\trsp} \vec b - e_3\otimes \big[\nabla \vec
    b_0, ~0\big]^{\trsp} \vec b_0 = \Big((\nabla y)^{\trsp}\nabla 
    \vec b - (\nabla y_0)^{\trsp}\nabla \vec b_0\Big)_{\sym}^*.$$
The last equality follows from the easy facts that, for $i,j=1,2$, we have:
\begin{equation*}
\begin{split}
& \langle \partial_i\vec b, \vec b\rangle = \langle \partial_i\vec b_0,
\vec b_0\rangle = \frac{1}{2} \partial_i\bar{\mathcal{G}}_{33}\\
& \langle\partial_i y, \partial_j\vec b\rangle - \langle\partial_j y, \partial_i\vec b\rangle =
\langle\partial_i y_0, \partial_j\vec b_0\rangle - \langle\partial_j y_0, \partial_i\vec b_0\rangle 
=  \partial_j\bar{\mathcal{G}}_{i3} - \partial_i\bar{\mathcal{G}}_{j3}. 
\end{split}
\end{equation*}

\smallskip

Thus, (\ref{cztery}) and (\ref{piec}) imply:
\begin{equation}\label{szesc}
\begin{split}
\int_{\mathcal{U}^h}\dist^2\big(\nabla v, SO(3)\big) \,\rmd z& \leq C
\int_{\Omega^h}\big|(\nabla u)(\nabla u_0)^{-1} - \bar R \bar R_0^{\trsp}\big|^2\,\rmd x
\leq C \int_{\Omega^h}\big|x_3 S(x', x_3)\big|^2\,\rmd x \\ & \leq C
\int_\omega \Big|\big((\nabla y)^{\trsp}\nabla 
    \vec b\big)_{\sym} - \big((\nabla y_0)^{\trsp}\nabla \vec b_0\big)_{\sym}
\Big|^2 \,\rmd x' \\ & = C \int_\omega \Big|\big((\nabla y)^{\trsp}\nabla 
    \vec b\big)_{\sym} - \frac{1}{2}\partial_3G(x',0)_{2\times 2}\Big|^2 \,\rmd x'
\end{split}
\end{equation}
with a constant $C$ that depends on $G, \omega$ and $h$,
but not on $y$. We conclude (\ref{weak_bd}) in view of (\ref{dwa}), (\ref{trzy}) and (\ref{szesc}).

\smallskip

{\bf 2.} To prove  (\ref{weak_bd}) in case $N>1$, let
$k,s:1\ldots N$ be such that 
$\omega_k\cap\omega_s\neq\emptyset$. Define:
$$F = \Big(\int_{\Omega^h_k\cap\Omega_s^h}\det\nabla u_0 \,\rmd
  x\Big)^{-1}\int_{\Omega^h_k\cap\Omega_s^h}(\det\nabla u_0)(\nabla
  u)(\nabla u_0)^{-1} \,\rmd x\in \mathbb{R}^{3\times 3}.$$ 
Denote by $R_k, R_s\in SO(3)$ the corresponding rotations in
(\ref{weak_bd}) on $\omega_k,\omega_s$. For $i\in\{k, s\}$ we have: 
\begin{equation*}
\begin{split}
|F-R_i|^2 & = \Big|\Big(\int_{\Omega^h_k\cap\Omega_s^h}\det\nabla u_0 \,\rmd
  x\Big)^{-1} \int_{\Omega^h_k\cap\Omega_s^h}(\det\nabla
  u_0)\big(\nabla u-R_i\nabla u_0\big)(\nabla u_0)^{-1} \,\rmd
  x\Big|^2 \\ & \leq C \int_{\Omega^h_k\cap\Omega_s^h}|\nabla u- R_i\nabla
  u_0|^2 \,\rmd x\leq C \int_{\Omega^h_i}|\nabla u- R_i\nabla
  u_0|^2 \,\rmd x \\ & \leq \int_{\omega_i} \big|\big((\nabla y)^{\trsp}\nabla \vec b\big)_{\sym}
-\frac{1}{2}\partial_3G(x',0)_{2\times 2}\Big|^2 \,\rmd x',
\end{split}
\end{equation*}
where for the sake of the last bound we applied the intermediate estimate in (\ref{trzy}) to the left
hand side of (\ref{dwa}), as discussed in the previous step. Consequently:
$$|R_k-R_s|^2 \leq C\int_{\omega} \big|\big((\nabla y)^{\trsp}\nabla \vec b\big)_{\sym}
-\frac{1}{2}\partial_3G(x',0)_{2\times 2}\Big|^2 \,\rmd x',$$
and thus:
\begin{equation*}
\begin{split}
\int_{\omega_k}|\nabla y - R_s\nabla y_0|^2 \,\rmd x' & \leq 2\Big(
\int_{\omega_k}|\nabla y - R_k\nabla y_0|^2 \,\rmd x' +
\int_{\omega_k}|R_k-R_s|^2|\nabla y_0|^2 \,\rmd x'\Big) \\ & \leq 
 C\int_{\omega} \big|\big((\nabla y)^{\trsp}\nabla \vec b\big)_{\sym}
-\frac{1}{2}\partial_3G(x',0)_{2\times 2}\Big|^2 \,\rmd x'.
\end{split}
\end{equation*}
This shows that one can take one and the same $R=R_1$ on each
$\{\omega_k\}_{k=1}^N$, at the expense of possibly increasing the
constant $C$ by a controlled factor depending only on $N$. The proof
of (\ref{weak_bd}) is done.
\end{proof}

\begin{remark}
A similar reasoning as in the proof of Lemma \ref{weak_coercive},
yields a quantitative version of the uniqueness of isometric immersion
with a prescribed second fundamental form compatible to the metric by the 
Gauss-Codazzi-Mainardi equations. More precisely, given a smooth
metric $g$ in $\omega\subset\mathbb{R}^2$, for every two isometric
immersions $y_1, y_2\in W^{2,2}(\omega, \mathbb{R}^3)$ of $g$, there holds:
$$\min_{R\in SO(3)}\int_\omega |\nabla y_1 - R\nabla y_2|^2  \,\rmd x'
\leq C\int_\omega |\Pi_{y_1}-\Pi_{y_2}|^2 \,\rmd x',$$
with a constant $C>0$, depending on $g$ and $\omega$ but independent of $y_1$ and $y_2$. 
\end{remark}

\section{Higher order energy scalings}\label{sec5}

In this and the next sections we assume that:  
\begin{equation}\label{vonKa}
\lim_{h\to 0}\frac{1}{h^2}\inf {\mathcal E^h}=0.
\end{equation}
Recall that by Theorem \ref{Kirchhoff_optimal} this condition is
equivalent to the existence of a (automatically smooth and unique up to rigid motions)
vector field $y_0:\bar\omega\to\mathbb{R}^3$ satisfying:
\begin{equation}\label{system}
(\nabla y_0)^{\trsp}\nabla y_0 = \bar{\mathcal{G}}_{2\times 2} \qquad\mbox{ and }\qquad
\big((\nabla y_0)^{\trsp}\nabla \vec b_0\big)_\sym =
\frac{1}{2}(\bar{\mathcal{G}}_1)_{2\times 2} \quad\mbox{ on }\;\omega,
\end{equation}
where in the oscillatory case (\ref{O}) the symmetric $x'$-dependent
matrix $\mathcal{G}_1$ is given in
(\ref{EO}) and there must be $({\mathcal{G}}_1)_{2\times 2}= x_3(\bar{\mathcal{G}}_1)_{2\times 2}$,
whereas in the non-oscillatory  (\ref{NO})  case
$\bar{\mathcal{G}}_1(x')$ is simply $\partial_3G(x', 0)$. 
The (smooth) Cosserat field
$\vec b_0:\bar\omega\to\mathbb{R}^3$ in (\ref{cosserat}) is uniquely
given by requesting that:
$$Q_0 \doteq \left[\partial_1 y_0,~ \partial_2 y_0, ~ \vec b_0\right] \qquad
  \mbox{satisfies: } \qquad Q_0^{\trsp} Q_0 = \bar{\mathcal{G}}, \quad \det Q_0 >0
  \quad \mbox{ on } \;\omega, $$
with notation similar to (\ref{Qyb}). We now introduce the new vector field $\vec
d_0:\bar\Omega\to\mathbb{R}^3$ through:
\begin{equation}\label{d0}
Q_0^{\trsp} \left[ x_3\partial_1\vec b_0(x'), ~ x_3\partial_2\vec b_0(x'),
  ~ \partial_3\vec d_0(x', x_3)\right] - \frac{1}{2}{\mathcal{G}}_1(x', x_3) \in so(3),
\end{equation}
justified by (\ref{system}) and in agreement with the construction (\ref{d-o}) of 
second order terms in the recovery sequence for the Kirchhoff limiting
energies. Explicitly, we have:
\begin{equation*}
\vec d_0(x',x_3) = Q_0(x')^{\trsp, -1}\Big(\int_0^{x_3}
{\mathcal{G}}_1(x',t)\,\rmd t \, e_3 - \frac{1}{2}\int_0^{x_3}
{\mathcal{G}}_1(x',t)_{33}\,\rmd t \, e_3 - \frac{x_3^2}{2}\left[\begin{array}{c}(\nabla \vec 
    b_0)^{\trsp}\vec b_0(x')\\ 0\end{array}\right]\Big).
\end{equation*}
In what follows, the smooth matrix field in (\ref{d0})
will be referred to as $P_0:\bar\Omega\to\mathbb{R}^{3\times 3}$, namely:
\begin{equation}\label{P0}
P_0(x', x_3) = \left[x_3\partial_1\vec b_0(x'), ~ x_3\partial_2\vec b_0(x'), ~ \partial_3\vec d_0(x', x_3)\right].
\end{equation}
In the non-oscillatory case (\ref{NO}), the above formulas become:
\begin{equation}\label{d0no}
\begin{split}
& \vec d_0 = \frac{x_3^2}{2}\tilde d_0(x'), \qquad P_0(x', x_3) =
x_3\Big[\partial_1\vec b_0, ~ \partial_2\vec b_0, ~ \tilde d_0\Big](x'),\\ 
& \mbox{where: } \quad \tilde d_0(x') = Q_0(x')^{\trsp,
  -1}\Big(\partial_3 G(x',0)e_3 - \frac{1}{2}\partial_3G(x',0)_{33}e_3
-  \left[\begin{array}{c}(\nabla \vec b_0)^{\trsp}\vec b_0(x')\\ 0\end{array}\right]\Big).
\end{split}
\end{equation}
We also note that the assumption $\int_{-1/2}^{1/2}\mathcal{G}_1(x', t) \,\rmd t=0$ implies:
\begin{equation}\label{av0}
\int_{-1/2}^{1/2}P_0(x', x_3) \,\rmd x_3= 0 \qquad\mbox{for all } x'\in\bar\omega.
\end{equation}
With the aid of $\vec d_0$ we now construct the sequence of deformations with low energy:

\begin{lemma}\label{higher_order_scaling_sequence}
Assume (\ref{O}). Then (\ref{vonKa}) implies:
$$ \inf\mathcal E^h \leq Ch^4.$$
\end{lemma}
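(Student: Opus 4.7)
I will build an explicit test deformation with energy of order $h^4$, guided by the second-order recovery sequence (\ref{rs-o})--(\ref{d-o}) of Theorem \ref{Glimsup}. Set
\begin{equation*}
u^h(x', x_3) := y_0(x') + x_3\,\vec b_0(x') + h^2\,\vec d_0\bigl(x', \tfrac{x_3}{h}\bigr), \qquad (x', x_3)\in \Omega^h,
\end{equation*}
with $y_0, \vec b_0, \vec d_0$ the smooth fields from (\ref{system})--(\ref{d0}). Because the rescaling is $t=x_3/h\in(-1/2,1/2)$, direct differentiation and the definition (\ref{P0}) give, uniformly on $\Omega^h$,
\begin{equation*}
\nabla u^h(x', x_3) = Q_0(x') + h\,P_0\bigl(x', \tfrac{x_3}{h}\bigr) + h^2\,\bigl[\partial_1 \vec d_0,~ \partial_2 \vec d_0,~ 0\bigr]\bigl(x', \tfrac{x_3}{h}\bigr).
\end{equation*}

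\textbf{Skewness of the first-order term.} Expand $(G^h)^{-1/2} = \bar A^{-1} - h\,\bar A^{-1} A_1 \bar A^{-1} + O(h^2)$ with $\bar A, A_1$ from (\ref{Adef}), evaluated at $(x', x_3/h)$, and set $\bar R := Q_0 \bar A^{-1}$. Since $Q_0^{\trsp}Q_0 = \bar A^2$ and $\det Q_0 > 0$, we have $\bar R\in SO(3)$ on $\omega$, and frame invariance of $W$ reduces the integrand to
\begin{equation*}
W\bigl(\nabla u^h (G^h)^{-1/2}\bigr) = W\bigl(Id_3 + h F^h + h^2 H^h + o(h^2)\bigr), \quad F^h := \bar A^{-1} Q_0^{\trsp} P_0 \bar A^{-1} - A_1 \bar A^{-1},
\end{equation*}
with $H^h$ uniformly bounded on $\Omega^h$. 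The crux is that $F^h$ is pointwise skew-symmetric: by (\ref{d0}), $(Q_0^{\trsp} P_0)_{\sym} = \tfrac{1}{2}\mathcal{G}_1$, whereas (\ref{Adef}) together with the symmetry of $A_1$ and $\bar A$ yield $\bar A A_1 + A_1 \bar A = \mathcal{G}_1$; multiplying by $\bar A^{-1}$ on both sides gives $A_1 \bar A^{-1} + \bar A^{-1} A_1 = \bar A^{-1} \mathcal{G}_1 \bar A^{-1}$, so that $(A_1 \bar A^{-1})_{\sym} = \tfrac{1}{2}\bar A^{-1}\mathcal{G}_1 \bar A^{-1}$. Conjugating the first identity by $\bar A^{-1}$ and subtracting from the second yields $(F^h)_{\sym} = 0$.

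\textbf{Taylor expansion and conclusion.} With $F^h$ skew, $e^{hF^h}\in SO(3)$, and matching
$e^{hF^h} = Id_3 + hF^h + \tfrac{h^2}{2}(F^h)^2 + O(h^3)$ gives
\begin{equation*}
Id_3 + hF^h + h^2 H^h + o(h^2) = e^{hF^h}\cdot\bigl(Id_3 + h^2 \Theta^h + o(h^2)\bigr), \quad \Theta^h := H^h - \tfrac{1}{2}(F^h)^2.
\end{equation*}
Applying frame invariance once more, together with the $\mathcal{C}^2$ Taylor expansion of $W$ at $Id_3$ (valid by (iv), noting $W(Id_3)=0=DW(Id_3)$ by (ii) and nonnegativity of $W$),
\begin{equation*}
W\bigl(\nabla u^h (G^h)^{-1/2}\bigr) = \tfrac{1}{2}\mathcal{Q}_3\bigl(h^2\Theta^h\bigr) + o(h^4) = O(h^4)
\end{equation*}
uniformly on $\Omega^h$. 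Integrating and using $|\Omega^h|=h|\omega|$ yields $\mathcal{E}^h(u^h) = h^{-1}\!\int_{\Omega^h} O(h^4)\,\rmd x = O(h^4)$, as claimed.

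\textbf{Obstacle.} I expect the main difficulty to be the skewness verification of $F^h$: the two summands originate from distinct constructions (the defining relation for $\vec d_0$ and the matrix square-root expansion of $G^h$), and their symmetric parts agree only after carefully exploiting the symmetry of $A_1$ to commute $\mathcal{G}_1$ across $\bar A^{-1}$. Once skewness is in hand, the $SO(3)$-factoring trick that absorbs the $O(h)$-contribution and reduces the density to $O(h^4)$ is routine.
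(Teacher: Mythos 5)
Your proof is correct and takes essentially the same route as the paper: the same test deformation from \eqref{changeva}, the same observation that the first-order term $S^h=F^h$ is skew by \eqref{d0} and \eqref{Adef}, and the same rotation-factoring argument absorbing the $O(h)$ contribution via frame invariance before Taylor expanding $W$ at $Id_3$. You merely spell out the skewness verification and the $e^{hF^h}$ step, which the paper records more tersely as $S^h_{\sym}=0$ following ``directly from the definition.''
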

\begin{proof}
Define the sequence of smooth maps $\{u^h:\bar\Omega^h\to\R^3\}_{h\to  0}$ by:
\begin{equation}\label{changeva}
u^h(x', x_3)=  y_0(x') + x_3\vec b_0(x') + \displaystyle{h^2\vec d_0\big(x',\frac{x_3}{h}\big) } 
\end{equation}
In order to compute $\nabla u^h (A^h)^{-1}$, recall the expansion of $(A^h)^{-1}$, so that:
\begin{equation}\label{try}
\nabla u^h(x)A^h(x)^{-1} = Q_0(x')\bar A(x')^{-1}\big(Id_3 + hS^h(x) +\mathcal{O}(h^2)\big),
\end{equation}
where for every $x=(x', x_3)\in \Omega^h$:
\begin{equation*}
S^h(x) = \displaystyle{\bar A(x')^{-1}\Big(Q_0(x')^{\trsp}P_0\big(x', \frac{x_3}{h}\big)
- \bar A(x') A_1\big(x',\frac{x_3}{h}\big) \Big)\bar A(x')^{-1}. }
\end{equation*}
By frame invariance of the energy density $W$ and since
$Q_0(x')\bar A(x')^{-1}\in SO(3)$, we obtain:
\begin{equation*}
\begin{split}
W\big(\nabla u^h(x)A^h(x)^{-1}\big) & = W\big(Id_3 +h S^h(x) +\mathcal{O}(h^2)\big)
= W\big(Id_3 + hS^h(x)_{\sym}+\mathcal{O}(h^2)\big) \\ & = W\big(Id_3
+\mathcal{O}(h^2)\big) = \mathcal{O}(h^4),
\end{split}
\end{equation*}
where we also used the fact that $S^h(x)_{\sym}=0$ following directly from the definition (\ref{d0}).
This implies that $\mathcal{E}^h(u^h)= \mathcal{O}(h^4)$ as well,
proving the claim. 
\end{proof}

\begin{lemma}\label{approx}
Assume (\ref{O}) and (\ref{vonKa}). For an
open, Lipschitz subset  $\mathcal{V}\subset \omega$, denote:
$$\mathcal V^h=\mathcal V\times \big(-\frac{h}{2}, \frac{h}{2}\big),
\qquad \mathcal{E}^h(u^h, \mathcal{V}^h) =
\frac{1}{h}\int_{\mathcal{V}^h}W\big(\nabla u^h(A^h)^{-1}\big) \,\rmd x.$$
If $y_0$ is injective on $\mathcal{V}$, then for every $u^h\in
W^{1,2}(\mathcal V^h,\R^3)$ there exists $\bar R^h\in \SO(3)$ such that:  
\begin{equation}\label{approx_ineq}
\frac{1}{h}\int_{\mathcal V^h}\Big|\nabla u^h(x)-\bar R^h\Big(Q_0(x')
+ h P_0\big(x', \frac{x_3}{h}\big)\Big)\Big|^2\,\rmd x\leq C\big({\mathcal E}^h(u^h,\mathcal V^h) +
h^3|\mathcal V^h|\big),
\end{equation}
with the smooth correction matrix field $P_0$ in (\ref{P0}). The constant $C$ in
(\ref{approx_ineq}) is uniform for all subdomains 
$\mathcal{V}^h\subset \Omega^h$ which are bi-Lipschitz equivalent with
controlled Lipschitz constants. 
\end{lemma}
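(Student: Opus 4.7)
The strategy is to reduce to the classical Friesecke--James--M\"uller (FJM) geometric rigidity estimate by composing $u^h$ with the inverse of the low-energy reference deformation $u_0^h$ from (\ref{changeva}), and then translating the rigidity bound obtained in the image back to $\mathcal{V}^h$.

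First I would verify that, for $h$ small, $u_0^h(x', x_3) = y_0(x') + x_3\vec b_0(x') + h^2\vec d_0(x', x_3/h)$ is a bi-Lipschitz diffeomorphism of $\mathcal{V}^h$ onto its image $\mathcal{U}^h \doteq u_0^h(\mathcal{V}^h)$, with Lipschitz constants controlled uniformly in $h$. This follows from injectivity of $y_0$ on $\mathcal{V}$, the bound $\det Q_0 > 0$, and the $C^1$-smallness of the higher-order correction $h^2 \vec d_0(\cdot, x_3/h)$. By (\ref{try}) together with the definition (\ref{P0}), one has $\nabla u_0^h(x', x_3) = Q_0(x') + h P_0(x', x_3/h) + \mathcal{O}(h^2)$ uniformly on $\bar{\mathcal{V}^h}$, and moreover $\nabla u_0^h(x)A^h(x)^{-1}$ is within $\mathcal{O}(h^2)$ of $SO(3)$ pointwise, because the first-order term $S^h$ in (\ref{try}) is skew by the very definition (\ref{d0}) of $\vec d_0$.

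Next I would introduce $v^h \doteq u^h \circ (u_0^h)^{-1} \in W^{1,2}(\mathcal{U}^h, \mathbb{R}^3)$, so that $\nabla v^h \circ u_0^h = \nabla u^h \cdot (\nabla u_0^h)^{-1}$. Using frame invariance of $\dist(\cdot, SO(3))$, the factorisation $\nabla u_0^h = (\nabla u_0^h (A^h)^{-1}) \cdot A^h$, and the $\mathcal{O}(h^2)$-proximity of $\nabla u_0^h (A^h)^{-1}$ to $SO(3)$, one deduces the pointwise bound
\[
\dist^2\bigl(\nabla v^h \circ u_0^h, SO(3)\bigr) \leq C\bigl(\dist^2(\nabla u^h (A^h)^{-1}, SO(3)) + h^4\bigr).
\]
Invoking assumption (iii) on $W$ and changing variables with a Jacobian bounded above and below, this integrates to
\[
\int_{\mathcal{U}^h}\dist^2\bigl(\nabla v^h, SO(3)\bigr)\,\rmd z \leq C h \bigl(\mathcal{E}^h(u^h,\mathcal{V}^h) + h^3|\mathcal{V}^h|\bigr).
\]
Applying the FJM rigidity theorem on the thin Lipschitz domain $\mathcal{U}^h$ produces $\bar R^h \in SO(3)$ with $\int_{\mathcal{U}^h}|\nabla v^h - \bar R^h|^2\,\rmd z$ controlled by the same right-hand side. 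Changing variables back to $\mathcal{V}^h$ yields an identical bound for $\int_{\mathcal{V}^h}|\nabla u^h - \bar R^h \nabla u_0^h|^2\,\rmd x$, and a triangle-inequality step replaces $\nabla u_0^h$ by $Q_0 + h P_0(\cdot, x_3/h)$ at the cost of an additional $\mathcal{O}(h^4 |\mathcal{V}^h|)$ term. Dividing by $h$ gives precisely (\ref{approx_ineq}).

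The main obstacle is the uniformity of the FJM constant on the thin domain $\mathcal{U}^h$, which in general degenerates as $h \to 0$. This is exactly resolved by the bi-Lipschitz equivalence (with controlled Lipschitz constants) between $\mathcal{V}^h$ and a scaled product domain, combined with the first-step bi-Lipschitz equivalence $\mathcal{U}^h \cong \mathcal{V}^h$; together they transfer the classical FJM constant valid on $\mathcal{V}\times(-1/2,1/2)$ to $\mathcal{U}^h$ with only an $h$-uniform multiplicative factor. This is also what makes the final constant $C$ uniform across all admissible subdomains in the statement.
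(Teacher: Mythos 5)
Your proof is correct and follows essentially the same route as the paper: compose with the reference low-energy map $Y^h$ from (\ref{changeva}) (what you call $u_0^h$), apply the FJM rigidity estimate to $v^h = u^h\circ(Y^h)^{-1}$, exploit that $\nabla Y^h(A^h)^{-1}$ is $\mathcal{O}(h^2)$-close to $SO(3)$ because $S^h\in so(3)$, and change variables back. The only stylistic difference is that the paper handles the comparison $\dist((\nabla u^h)(A^h)^{-1}\cdot A^h(\nabla Y^h)^{-1},SO(3))\leq C\,\dist((\nabla u^h)(A^h)^{-1},SO(3)(\nabla Y^h)(A^h)^{-1})$ at the integral level rather than asserting a pointwise estimate, but the content is identical.
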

\begin{proof}
The proof,  similar to \cite[Lemma 2.2]{LRR}, is a combination of the
change of variable argument in Lemma \ref{weak_coercive} 
and the low energy deformation construction in Lemma
\ref{higher_order_scaling_sequence}. Observe first that:
$$Q_0(x') + hP_0\big(x', \frac{x_3}{h}\big) = \nabla Y^h(x', x_3) + \mathcal{O}(h^2),$$
where by $Y^h:\bar \Omega^h\to\mathbb{R}^3$ we denote the smooth vector
fields in (\ref{changeva}). It is clear that for sufficiently small
$h>0$, each $Y^h_{\mid\mathcal{V}^h}$ is a smooth 
diffeomorphism onto its image $\mathcal{U}^h\subset\mathbb{R}^3$,
satisfying uniformly: $\det\nabla Y^h>c>0$.  We now consider $v^h=u^h\circ
(Y^h)^{-1}\in W^{1,2}(\mathcal{U}^h, \mathbb{R}^3)$. By the rigidity
estimate \cite{FJM02}:
\begin{equation}\label{bzz}
\int_{\mathcal{U}^h}|\nabla v^h-\bar R^h|^2\,\rmd z\leq C
\int_{\mathcal{U}^h}\dist^2\big(\nabla v^h, SO(3)\big)\,\rmd z, 
\end{equation}
for some rotation $\bar R^h\in SO(3)$. Noting that: $(\nabla v^h)\circ
Y^h = (\nabla u^h)(\nabla Y^h)^{-1}$ in the set $\mathcal{V}^h$, the change of
variable formula yields for the left hand side in (\ref{bzz}): 
\begin{equation*}
\begin{split}
\int_{\mathcal{U}^h}|\nabla v^h-\bar R^h|^2\,\rmd z & =
\int_{\mathcal{V}^h}(\det \nabla Y^h)\big|(\nabla u^h)(\nabla
Y^h)^{-1}-\bar R^h\big|^2\,\rmd x\\ & \geq c \int_{\mathcal{V}^h}\Big|\nabla u^h
-\bar R^h \Big(Q_0(x') + hP_0\big(x', \frac{x_3}{h}\big)
+\mathcal{O}(h^2)\Big)\Big|^2\,\rmd x \\ & \geq  c \int_{\mathcal{V}^h}\Big|\nabla u^h
-\bar R^h \Big(Q_0(x') + hP_0\big(x',
\frac{x_3}{h}\big)\Big)\Big|^2\,\rmd x - c \int_{\mathcal{V}^h}\mathcal{O}(h^4)\,\rmd x. 
\end{split}
\end{equation*}
Similarly, the right hand side in (\ref{bzz}) can be estimated by:
\begin{equation*}
\begin{split}
\int_{\mathcal{U}^h}\dist^2\big(\nabla v^h, SO(3)\big)\,\rmd z & =
\int_{\mathcal{V}^h}(\det \nabla Y^h)\, \dist^2\big((\nabla u^h)(\nabla Y^h)^{-1}, SO(3)\big)\,\rmd x
\\ & \leq C \int_{\mathcal{V}^h} \dist^2\big((\nabla u^h) (A^h)^{-1}\cdot A^h(\nabla
Y^h)^{-1}, SO(3)\big)\,\rmd x\\ & \leq C \int_{\mathcal{V}^h}
\dist^2\Big((\nabla u^h) (A^h)^{-1}, SO(3) (\nabla
Y^h)(A^h)^{-1}\Big)\,\rmd x.
\end{split}
\end{equation*}
Recall that from (\ref{try}) we have: $(\nabla
Y^h)(A^h)^{-1}\in SO(3)\big(Id_3 + hS^h+\mathcal{O}(h^2)\big)\subset
SO(3)\big(Id_3 + \mathcal{O}(h^2)\big)$, since $S^h\in
so(3)$. Consequently, the above bound becomes:
\begin{equation*}
\begin{split}
\int_{\mathcal{U}^h}\dist^2\big(\nabla v^h, SO(3)\big)\,\rmd z & \leq C \int_{\mathcal{V}^h}
\dist^2\Big((\nabla u^h) (A^h)^{-1}, SO(3) (Id_3 +
\mathcal{O}(h^2))\Big)\,\rmd x \\ & \leq 
C \int_{\mathcal{V}^h} \dist^2\Big((\nabla u^h) (A^h)^{-1}, SO(3)\Big) + \mathcal{O}(h^4)\,\rmd x. 
\end{split}
\end{equation*}
The estimate (\ref{approx_ineq}) follows now in view of (\ref{bzz})
and by the lower bound on energy density $W$.
\end{proof}

The well-known approximation technique \cite{FJM02} combined with the arguments in 
\cite[Corollary 2.3]{LRR}, yield the following approximation result that can be seen
as a higher order counterpart of (\ref{Rh}): 

\begin{cor}\label{cor_approx}
Assume (\ref{O}) and (\ref{vonKa}). Then, for any
sequence $\{u^h\in W^{1,2}(\Omega^h,\R^3)\}_{h\to 0}$ satisfying:
$\mathcal{E}^h(u^h)\leq Ch^4,$
there exists a sequence of rotation-valued maps $R^h\in
W^{1,2}(\omega, SO(3))$, such that with $P_0$ defined in (\ref{P0}) we\ have:
\begin{equation}\label{eq: approx}
\begin{split}
& \frac{1}{h}\int_{\Omega^h}\Big|\nabla u^h(x)-R^h(x')\Big(Q_0(x')+hP_0\big(x',
\frac{x_3}{h}\big)\Big)\Big|^2\,\rmd x\leq Ch^4,\\
& \int_{\omega}|\nabla R^h(x')|^2\,\rmd x'\leq Ch^2.
\end{split}
\end{equation}
\end{cor}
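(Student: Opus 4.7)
The plan is to adapt the piecewise-rigidity argument of \cite{FJM02}, as in \cite[Corollary 2.3]{LRR}, but with the natural rotation in each cell supplied by Lemma \ref{approx} rather than by the classical rigidity estimate applied to $\nabla u^h$ alone. Fix a covering $\{\mathcal V_k\}_k$ of $\omega$ by squares of side length comparable to $h$, with bounded overlap, chosen so that $y_0$ is injective on each enlarged patch $\mathcal V_k\cup\mathcal V_l$ when $\mathcal V_k$ and $\mathcal V_l$ share a face (this is possible for small enough $h$ since $y_0$ is a smooth immersion, hence locally a bi-Lipschitz homeomorphism with uniform constants on a fixed-scale cover). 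Apply Lemma \ref{approx} on each cylinder $\mathcal V_k^h=\mathcal V_k\times(-h/2,h/2)$ and on each pair $(\mathcal V_k\cup\mathcal V_l)^h$ to obtain constant rotations $\bar R_k^h,\bar R_{kl}^h\in SO(3)$ satisfying
\begin{equation*}
\frac{1}{h}\int_{\mathcal V_k^h}\Big|\nabla u^h-\bar R_k^h\bigl(Q_0+hP_0(x',\tfrac{x_3}{h})\bigr)\Big|^2\,\rmd x\leq C\bigl(\mathcal E^h(u^h,\mathcal V_k^h)+h^3|\mathcal V_k^h|\bigr),
\end{equation*}
and analogously for $\bar R_{kl}^h$ on the pair, with the constant $C$ uniform in $k,l,h$ because of the last statement of Lemma \ref{approx}.

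The next step is the standard jump control. Combining the two estimates for $\mathcal V_k^h$ and the pair cylinder and averaging $\bigl(Q_0+hP_0\bigr)\bigl(Q_0+hP_0\bigr)^{\trsp}$ on $\mathcal V_k^h$, which is $\bar{\mathcal G}+\mathcal O(h)$ and hence uniformly bounded below, one deduces
\begin{equation*}
|\bar R_k^h-\bar R_{kl}^h|^2\,|\mathcal V_k^h|\leq C\bigl(\mathcal E^h(u^h,(\mathcal V_k\cup\mathcal V_l)^h)+h^3|(\mathcal V_k\cup\mathcal V_l)^h|\bigr),
\end{equation*}
and, after the triangle inequality, the corresponding bound on $|\bar R_k^h-\bar R_l^h|^2$. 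Dividing by $|\mathcal V_k^h|\sim h^3$ and using $\mathcal E^h(u^h)\leq Ch^4$ together with the finite-overlap property, the sum of $h^{-2}|\bar R_k^h-\bar R_l^h|^2$ over all adjacent pairs is bounded by $C(h^2+h^2)=Ch^2$ on the scale $|\mathcal V_k|\sim h^2$. This is exactly the discrete $H^1$ bound, at scale $h$, for the piecewise-constant map $\tilde R^h(x')=\bar R_k^h$ on $\mathcal V_k$: one has $\int_\omega|D\tilde R^h|^2\leq Ch^2$ in the sense of finite differences.

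Now define $R^h\in W^{1,2}(\omega,\mathbb R^{3\times 3})$ by mollifying $\tilde R^h$ at scale $h$ (a partition-of-unity average of the neighbouring $\bar R_k^h$) and then projecting pointwise onto $SO(3)$; the latter projection is smooth and $1$-Lipschitz on a neighbourhood of $SO(3)$, which $\tilde R^h$ stays in, uniformly in $h$. The discrete $H^1$ bound transfers to $\int_\omega|\nabla R^h|^2\leq Ch^2$, which is the second estimate in (\ref{eq: approx}). For the first estimate, one writes on each $\mathcal V_k^h$
\begin{equation*}
\bigl|\nabla u^h-R^h(Q_0+hP_0)\bigr|\leq \bigl|\nabla u^h-\bar R_k^h(Q_0+hP_0)\bigr|+|\bar R_k^h-R^h|\bigl|Q_0+hP_0\bigr|,
\end{equation*}
where $|Q_0+hP_0|$ is uniformly bounded and $|\bar R_k^h-R^h|$ is controlled by the maximum jump $\max_l|\bar R_k^h-\bar R_l^h|$ on patches adjacent to $\mathcal V_k$. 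Squaring, integrating, and summing over $k$ using the energy bound $\mathcal E^h(u^h)\leq Ch^4$, the jump bound, and $h^3|\mathcal V_k^h|\leq Ch^5$, both contributions are $\mathcal O(h^5)$ after integration, which after the $\tfrac{1}{h}$ prefactor gives the claimed $Ch^4$ bound.

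The main obstacle is the uniformity of the constant $C$ in Lemma \ref{approx} when applied on the pair cylinders $(\mathcal V_k\cup\mathcal V_l)^h$: one needs these pair domains to remain bi-Lipschitz equivalent with controlled Lipschitz constants as $h\to 0$, and one needs $y_0$ to be injective on them. Both are guaranteed if the covering is chosen at a fixed scale in the reference frame $y_0^{-1}$ of a small-enough neighbourhood system, but some care is required near $\partial\omega$; this is handled by a standard reflection/extension of $y_0$ beyond $\omega$ and by thickening the boundary patches, exactly as in the construction of \cite[Corollary 2.3]{LRR}.
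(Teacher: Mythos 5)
Your proposal is correct and follows exactly the route the paper intends: the paper cites the covering technique of \cite{FJM02} together with \cite[Corollary 2.3]{LRR} without spelling it out, and you reproduce that construction with Lemma \ref{approx} supplying the local rigidity estimate on $h$-scale patches, followed by jump control and the mollification-plus-projection step. A small typo aside (the intermediate jump inequality should carry $|\mathcal V_k|\sim h^2$, not $|\mathcal V_k^h|\sim h^3$, on its left-hand side, since the $\tfrac{1}{h}$ prefactor and the $h$-thickness cancel), the accounting leading to the uniform bound $|\bar R_k^h-\bar R_l^h|\le Ch$ and to $\int_\omega|\nabla R^h|^2\le Ch^2$ is sound.
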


\section{Compactness and $\Gamma$-limit under $Ch^4$ energy bound}\label{4}

In this section, we derive the $\Gamma$-convergence result for the
energy functionals $\mathcal{E}^h$ in the von K\'arm\'an scaling
regime. The general form of the limiting energy $\mathcal{I}_4^O$ will
be further discussed and split into the stretching, bending, curvature
and excess components in section \ref{sec77}. We begin by stating the
compactness result, that is the higher order version of Theorem \ref{compactness_thm}.

\begin{thm}\label{thm: lower_bound_4}
Assume (\ref{O}) and (\ref{vonKa}). Fix
$y_0$ solving (\ref{system}) and normalize it to have:
$\int_\omega y_0 \,\rmd x'=0$. Then, for any sequence of
deformations $\{u^h\in W^{1,2}(\Omega^h,\R^3)\}_{h\to 0}$ satisfying:
\begin{equation}\label{h4}
{\mathcal E}^h(u^h)\leq Ch^4,
\end{equation}
there exists a sequence $\{\bar R^h\in SO(3)\}_{h\to
  0}$ such that the following convergences (up to a subsequence) below,
hold for $y^h\in W^{1,2}(\Omega, \R^3)$:
$$y^h(x', x_3) = (\bar R^h)^{\trsp} \Big( u^h(x',hx_3) - \fint_{\Omega^h} u^h \,\rmd x\Big).$$
\begin{itemize}
\item [(i)] $y^h\to y_0$ strongly in $W^{1,2}(\Omega, \R^3)$ and
  $\displaystyle{\frac{1}{h}\partial_3y^h\to \vec b_0}$ strongly in $L^2(\Omega,
  \R^3)$, as $h\to 0$.\vspace{1mm}
\item [$(ii)$] There exists $V \in W^{2,2}(\omega, \R^3)$ and $\mathbb
  S\in L^2(\omega,\mathbb{R}^{2\times 2}_\sym)$ such that, as $h\to 0$:
\begin{equation*}
\begin{split}
& V^h(x')=\frac{1}{h}\int_{-1/2}^{1/2} y^h(x',x_3) - \big(y_0(x')+h
x_3\vec b_0(x')\big)\,\rmd x_3 \to V \quad \mbox{strongly in }
W^{1,2}(\omega, \R^3)\\
& \frac{1}{h}\big((\nabla y_0)^{\trsp}\nabla
V^h\big)_{\sym}\rightharpoonup\mathbb S\quad\mbox{ weakly in
} L^2(\omega, \R^{2\times 2}).
\end{split}
\end{equation*}
\item [$(iii)$] The limiting displacement $V$ satisfies: $\big((\nabla
  y_0)^{\trsp}\nabla V\big)_{\sym}=0$ in $\omega$.
\end{itemize}
\end{thm}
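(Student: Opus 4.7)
The argument rests on Corollary \ref{cor_approx}, which for any sequence $\{u^h\}$ with $\mathcal E^h(u^h)\leq Ch^4$ supplies rotation fields $R^h\in W^{1,2}(\omega,\SO(3))$ satisfying (\ref{eq: approx}). From $\|\nabla R^h\|_{L^2(\omega)}^2\leq Ch^2$ and the Poincar\'e inequality I obtain $\|R^h-\fint_\omega R^h\|_{L^2(\omega)}\leq Ch$, hence $\dist(\fint_\omega R^h,\SO(3))\leq Ch$, so projection onto $\SO(3)$ produces constant rotations $\bar R^h\in\SO(3)$ with $\|R^h-\bar R^h\|_{W^{1,2}(\omega)}\leq Ch$. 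Setting $\tilde R^h=(\bar R^h)^{\trsp}R^h$ and $A^h=\tfrac{1}{h}(\tilde R^h-Id_3)$, the family $\{A^h\}$ is bounded in $W^{1,2}(\omega,\R^{3\times 3})$, so along a subsequence $A^h\rightharpoonup A$ weakly in $W^{1,2}$ and strongly in every $L^p(\omega)$ with $p<\infty$, by the two-dimensional Rellich embedding. The constraint $\tilde R^h(\tilde R^h)^{\trsp}=Id_3$ yields the polar identity $(A^h)_{\sym}=-\tfrac{h}{2}A^h(A^h)^{\trsp}$, which in the strong $L^2$ limit forces $A\in W^{1,2}(\omega,so(3))$.

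For part (i), the chain rule gives $\big[\nabla' y^h,\tfrac{1}{h}\partial_3 y^h\big](x',x_3)=(\bar R^h)^{\trsp}\nabla u^h(x',hx_3)$, and rescaling $hx_3\mapsto x_3$ in (\ref{eq: approx}) yields
\begin{equation*}
\big\|(\bar R^h)^{\trsp}\nabla u^h(\cdot,h\cdot)-\tilde R^h(Q_0+hP_0)\big\|_{L^2(\Omega)}^2\leq Ch^4.
\end{equation*}
Since $\tilde R^h\to Id_3$ strongly in $L^2$ while $Q_0,P_0$ are bounded, the right-hand side converges strongly in $L^2(\Omega)$ to $Q_0$, so $\nabla' y^h\to\nabla' y_0$ and $\tfrac{1}{h}\partial_3 y^h\to\vec b_0$ in $L^2(\Omega)$. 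The identity $\fint_\Omega y^h=0$ combined with Poincar\'e then upgrades this to strong $W^{1,2}(\Omega,\R^3)$ convergence to an $x_3$-independent limit $y$ with $\nabla' y=\nabla' y_0$ and $\fint_\omega y=0=\fint_\omega y_0$, which forces $y=y_0$.

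For part (ii), I compute $\nabla' V^h$ from the definition using the above approximation and the crucial fact $\int_{-1/2}^{1/2}P_0|_{1,2}\,\rmd x_3=0$ (both columns are odd in $x_3$), obtaining
\begin{equation*}
\nabla' V^h(x')=A^h(x')\nabla y_0(x')+\frac{1}{h}\int_{-1/2}^{1/2}\tilde r^h(x',x_3)\,\rmd x_3,
\end{equation*}
where the remainder satisfies $\|\tilde r^h\|_{L^2(\Omega)}^2\leq Ch^4$. By Jensen the last term is $O(h)$ in $L^2(\omega)$, while $A^h\nabla y_0\to A\nabla y_0$ strongly in $L^2$; hence $\nabla' V^h\to A\nabla y_0$ strongly in $L^2(\omega)$. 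Since $\fint_\omega V^h=0$, Poincar\'e yields strong $W^{1,2}$ convergence of $V^h$ to $V$ with $\nabla V=A\nabla y_0\in W^{1,2}(\omega,\R^{3\times 2})$, so that $V\in W^{2,2}(\omega,\R^3)$. For the linearized strain, the polar identity for $A^h$ gives
\begin{equation*}
\frac{1}{h}\big((\nabla y_0)^{\trsp}\nabla V^h\big)_{\sym}=-\frac{1}{2}(\nabla y_0)^{\trsp}A^h(A^h)^{\trsp}\nabla y_0+\frac{1}{h^2}\Big((\nabla y_0)^{\trsp}\int_{-1/2}^{1/2}\tilde r^h\,\rmd x_3\Big)_{\sym};
\end{equation*}
the first summand is bounded in $L^2$ via the two-dimensional embedding $W^{1,2}\hookrightarrow L^4$, the second via $\|\tilde r^h\|_{L^2(\Omega)}^2\leq Ch^4$, so weak $L^2$ compactness supplies $\mathbb S$. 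Part (iii) is then immediate: with $\nabla V=A\nabla y_0$ and $A\in so(3)$, one has $\big((\nabla y_0)^{\trsp}A\nabla y_0\big)_{\sym}=(\nabla y_0)^{\trsp}A_{\sym}\nabla y_0=0$.

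\textbf{Main obstacle.} The delicate step is the simultaneous extraction of a first-order $W^{1,2}$ limit $A$ of $(\tilde R^h-Id_3)/h$ together with a quadratic control on its symmetric part: the algebraic identity $(A^h)_{\sym}=-\tfrac{h}{2}A^h(A^h)^{\trsp}$ must be paired with the two-dimensional Sobolev embedding $W^{1,2}\hookrightarrow L^4$ to yield the $L^2$-boundedness of $\tfrac{1}{h}((\nabla y_0)^{\trsp}\nabla V^h)_{\sym}$, even though $\nabla V^h$ itself contains non-skew contributions of order unity. This is the precise mechanism that injects the quadratic "von K\'arm\'an" correction $\tfrac{1}{2}(\nabla y_0)^{\trsp}A^2\nabla y_0$ into the limiting strain $\mathbb S$, and it is what makes the whole scheme genuinely two-dimensional.
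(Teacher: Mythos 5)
Your argument is correct and reconstructs, with full detail, precisely the proof the paper omits by citing \cite[Theorem~3.1]{LRR}: the approximation from Corollary~\ref{cor_approx} supplies $R^h$ with $\|\nabla R^h\|_{L^2}^2\le Ch^2$, the projected averages give the constant rotations $\bar R^h$ as in~\eqref{r1}, your $A^h=\frac1h((\bar R^h)^{\trsp}R^h-Id_3)$ is exactly the paper's $S^h$ in~\eqref{r2}, its skew-symmetric $W^{1,2}$ limit is the $S\in so(3)$ in the text, and the cancellation $\int_{-1/2}^{1/2}P_0\,\rmd x_3=0$ from~\eqref{av0} is precisely the ingredient the authors flag as what makes the oscillatory case work like~\cite{LRR}. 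The two cosmetic differences are that you define $\bar R^h=\mathbb{P}_{SO(3)}\fint_\omega R^h$ while the paper uses $\mathbb{P}_{SO(3)}\fint_{\Omega^h}\nabla u^h Q_0^{-1}$ (equivalent to leading order), and you slightly strengthen~\eqref{r1} to a $W^{1,2}$ bound via Poincaré; neither affects the conclusion, and your extraction of $\mathbb S$ via the polar identity $(A^h)_{\sym}=-\frac h2 A^h(A^h)^{\trsp}$ combined with the 2D embedding $W^{1,2}\hookrightarrow L^4$ is the standard mechanism in this setting.
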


We omit the proof because it follows as in \cite[Theorem 3.1]{LRR} in view of
condition (\ref{av0}). We only recall the definitions used in the
sequel. The rotations $\bar R^h$ are given by:
$$\bar R^h = \mathbb{P}_{SO(3)}\fint_{\Omega^h}\nabla u^h(x) Q_0(x')^{-1}\,\rmd x$$
and (\ref{eq: approx}) implies that they satisfy, for some limiting  rotation $\bar R$:
\begin{equation}\label{r1}
\int_\omega |R^h(x') -\bar R^h|^2 \,\rmd x' \leq Ch^2\qquad
\mbox{and}\qquad \bar R^h\to \bar R\in SO(3).
\end{equation}
Consequently:
\begin{equation}\label{r2}
S^h = \frac{1}{h}\big((\bar R^h)^{\trsp}R^h(x') -Id_3\big)
\rightharpoonup S \qquad
\mbox{weakly in }\;W^{1,2}(\omega, \mathbb{R}^{3\times 3})
\end{equation}
The field $S\in W^{1,2}(\omega, so(3))$ is such that $(\nabla
y_0)^{\trsp}\nabla V = \big(Q_0^{\trsp}SQ_0\big)_{2\times 2}\in
so(2)$, which allows for defining a new vector field $\vec p\in W^{1,2}(\omega,
\mathbb{R}^3)$ through:
\begin{equation}\label{r3}
\left[\nabla V,~ \vec p\right] = S Q_0\quad
\mbox{ or equivalently: }\; \vec p(x') = -Q_0(x')^{\trsp,-1
}\left[\begin{array}{c}\nabla V(x')^{\trsp}\vec b_0(x') \\
    0\end{array}\right] \quad \mbox{for all } x'\in\omega.
\end{equation}
Finally, by (\ref{eq: approx}) we note the uniform 
boundedness of the fields $\{Z^h\in L^2(\Omega, \mathbb{R}^{3\times
  3})\}_{h\to 0}$ below, together with their convergence (up to as
subsequence) as $h\to 0$:
\begin{equation}\label{m-1}
{Z}^h(x) = \frac{1}{h^2} \Big(\nabla u^h(x', hx_3) - R^h(x')
\big(Q_0(x') + hP_0(x', x_3)\big)\Big) \rightharpoonup Z \qquad\mbox{
  weakly in } L^2(\Omega,\mathbb{R}^{3\times 3}).
\end{equation}
Rearranging terms and using the previously established convergences,
it can be shown that:
\begin{equation}\label{m7}
\mathbb S(x') =\left(Q_0^{\trsp}(x')\bar R^{\trsp}\int_{-1/2}^{1/2}Z(x',x_3)\,\rmd x_3\right)_{2\times
  2,\sym} -\frac{1}{2}\nabla V(x')^{\trsp}\nabla V(x') \qquad\mbox{
  for all } x'\in \omega. 
\end{equation}

\medskip

\begin{thm}\label{Gliminf4}
In the setting of Theorem \ref{thm: lower_bound_4}, $\liminf_{h\to 0}\frac{1}{h^4}\mathcal E^h(u^h)$ 
is bounded below by:
\begin{equation*}
\mathcal{I}^O_4(V,\mathbb S) = 
\frac{1}{2}\int_{\Omega}\mathcal{Q}_2\Big(x',I(x') +x_3III(x') +
II(x) \Big) \,\rmd x = \frac{1}{2} \|I+ x_3 III+ II\|_{\mathcal{Q}_2}^2,
\end{equation*}
where: 
\begin{equation}\label{123}
\begin{split}
I(x') & =  \mathbb{S}(x') + \frac{1}{2}\nabla
      V(x')^{\trsp}\nabla V(x') - \nabla y_0(x')^{\trsp} \nabla \int_{-1/2}^{1/2} \vec
      d_0(x) \,\rmd x_3, \\
III(x') & = \nabla y_0(x')^{\trsp}\nabla \vec p(x') + \nabla V(x')^{\trsp}\nabla\vec b_0 ,\\
II(x) & = \frac{x_3^2}{2}\nabla \vec b_0(x')^{\trsp}\nabla \vec b_0(x') +
\nabla y_0(x')^{\trsp}\nabla_{\mathrm{tan}}\vec d_0(x) - \frac{1}{4}\mathcal{G}_2(x)_{2\times 2}.
\end{split}
\end{equation}
\end{thm}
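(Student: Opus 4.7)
The plan is to adapt the strategy of Theorem \ref{Gliminf} one order higher in $h$, using the improved rigidity estimate of Corollary \ref{cor_approx} and the compactness data from Theorem \ref{thm: lower_bound_4}. Denote by $R_0=Q_0\bar A^{-1}\in\mathcal{C}^\infty(\bar\omega,SO(3))$ the polar rotation of $Q_0$, and substitute the approximation $\nabla u^h(x',hx_3)=R^h(x')(Q_0(x')+hP_0(x',x_3))+h^2Z^h(x)$ from (\ref{eq: approx})--(\ref{m-1}) into the quantity $R_0(x')^{\trsp}R^h(x')^{\trsp}\nabla u^h(x',hx_3)A^h(x',hx_3)^{-1}$. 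Taylor-expanding $(A^h)^{-1}=\bar A^{-1}-h\bar A^{-1}A_1\bar A^{-1}+h^2\bigl[\bar A^{-1}A_1\bar A^{-1}A_1\bar A^{-1}-\tfrac{1}{2}\bar A^{-1}A_2\bar A^{-1}\bigr]+o(h^2)$, the zeroth-order term equals $Id_3$, while the $\mathcal{O}(h)$ coefficient reduces to $\bar A^{-1}(Q_0^{\trsp}P_0-\bar AA_1)\bar A^{-1}$, which is skew-symmetric because $(Q_0^{\trsp}P_0)_{\sym}=\tfrac{1}{2}\mathcal{G}_1=(\bar AA_1)_{\sym}$ by (\ref{d0}) and (\ref{Adef}) respectively.

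Denoting this skew field by $B_1\in so(3)$ and noting that $R_0\exp(hB_1)\in SO(3)$, two applications of frame invariance yield
\begin{equation*}
W\big(\nabla u^h(A^h)^{-1}\big)=W\big(Id_3+h^2\tilde K^h+o(h^2)\big),
\end{equation*}
where $\tilde K^h=C^h-\tfrac{1}{2}B_1^2$ and $C^h$ collects the remaining $\mathcal{O}(h^2)$ coefficient $R_0^{\trsp}(R^h)^{\trsp}Z^h\bar A^{-1}-\bar A^{-1}Q_0^{\trsp}P_0\bar A^{-1}A_1\bar A^{-1}+A_1\bar A^{-1}A_1\bar A^{-1}-\tfrac{1}{2}A_2\bar A^{-1}$. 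In view of (\ref{m-1}) and the smoothness of $P_0,A_1,A_2$, the family $\{\tilde K^h\}$ is bounded in $L^2(\Omega,\mathbb{R}^{3\times 3})$ and, along a subsequence, $\tilde K^h\rightharpoonup\tilde K$ weakly. Truncating to $\{h|\tilde K^h|\le 1\}$ and using the $\mathcal C^2$-regularity of $W$ at $Id_3$, the standard liminf argument of Theorem \ref{Gliminf} gives
\begin{equation*}
\liminf_{h\to 0}\frac{1}{h^4}\mathcal E^h(u^h)\ge\frac{1}{2}\int_\Omega\mathcal Q_3\big(\tilde K(x)\big)\,\rmd x\ge \frac{1}{2}\int_\Omega\mathcal Q_2\Big(x',\bigl(\bar A(x')\tilde K(x)\bar A(x')\bigr)_{2\times 2}\Big)\,\rmd x.
\end{equation*}

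The main obstacle is the bookkeeping that identifies $(\bar A\tilde K\bar A)_{2\times 2,\sym}$ with $I+x_3III+II$ from (\ref{123}). Using $R_0^{\trsp}Q_0=\bar A$, a direct computation reduces $\bar A\tilde K^h\bar A$ to
\begin{equation*}
Q_0^{\trsp}(R^h)^{\trsp}Z^h-Q_0^{\trsp}P_0\bar A^{-1}A_1+\bar AA_1\bar A^{-1}A_1-\tfrac{1}{2}\bar AA_2-\tfrac{1}{2}(Q_0^{\trsp}P_0-\bar AA_1)\bar{\mathcal G}^{-1}(Q_0^{\trsp}P_0-\bar AA_1).
\end{equation*}
Passing to the weak limit: the $Z^h$-block yields $\mathbb{S}+\tfrac{1}{2}\nabla V^{\trsp}\nabla V-\nabla y_0^{\trsp}\nabla\int_{-1/2}^{1/2}\vec d_0\,\rmd x_3$ after symmetrizing through (\ref{m7}); the $x_3$-linear part of $Q_0^{\trsp}P_0$ combined with (\ref{r3}) and the first two columns of (\ref{P0}) produces the bending form $x_3III$; the third column of $P_0$ (namely $\partial_3\vec d_0$) contributes the $\nabla y_0^{\trsp}\nabla_{\mathrm{tan}}\vec d_0$ piece of $II$; the $A_1$-quadratic and $A_2$-linear blocks collapse via the second identity in (\ref{Adef}), $2A_1^2+2(\bar AA_2)_{\sym}=\mathcal{G}_2$, to yield $-\tfrac{1}{4}\mathcal{G}_2(x)_{2\times 2}$ modulo the last $B_1^2$-correction; and the $\tfrac{x_3^2}{2}\nabla\vec b_0^{\trsp}\nabla\vec b_0$ term of $II$ arises precisely from that $B_1^2$-correction localized on the first two columns. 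Matching these five blocks term by term against (\ref{123})---and observing that the antisymmetric parts of $\bar A\tilde K\bar A$ do not affect $\mathcal Q_2(x',\cdot)$---completes the identification.
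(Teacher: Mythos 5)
Your overall strategy is the same as the paper's: substitute the approximation $\nabla u^h(x',hx_3)=R^h(x')\big(Q_0(x')+hP_0(x',x_3)\big)+h^2Z^h(x)$ from (\ref{eq: approx})--(\ref{m-1}), expand $(A^h)^{-1}$, observe that the $\mathcal O(h)$ coefficient (your $B_1$, the paper's $I_1$) lies in $so(3)$ by (\ref{d0}) and (\ref{Adef}), kill it by frame invariance via the exponential, Taylor-expand $W$ on a good set, pass to the weak limit and drop to $\mathcal Q_2$. The algebra of $\bar A\tilde K^h\bar A$ that you carry out is also consistent with the paper's (\ref{m5}).

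There is, however, a genuine gap in the identification of the limiting integrand, and it is precisely the hard part of the proof. You pass to "the weak limit $Z$" of $Z^h$, but nothing in (\ref{eq: approx}) or (\ref{m-1}) tells you what the $x_3$-dependence of $Z(x',x_3)$ looks like. The paper spends the entire Step 2 on this: it constructs the auxiliary fields $f^{s,h}$, shows $f^{s,h}\to\vec p$ strongly in $L^2$ via (\ref{r2}), writes an alternative difference-quotient form of $f^{s,h}$ whose tangential derivatives can be matched, and deduces from (\ref{r1}), (\ref{r2}) and (\ref{r3}) the key identity
\begin{equation*}
\bar R^{\trsp}\big(Z(x',x_3)-Z(x',0)\big)e_i=x_3\big(\partial_i\vec p(x')-S(x')\partial_i\vec b_0(x')\big)+\partial_i\vec d_0(x',x_3)-\partial_i\vec d_0(x',0),\quad i=1,2,
\end{equation*}
from which (\ref{m8}) follows, and only then is (\ref{m7}) used to pin down the $x_3$-average and get (\ref{m9}). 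Your proposal compresses all of this into "after symmetrizing through (\ref{m7})," but (\ref{m7}) controls only $\int_{-1/2}^{1/2}Z\,\rmd x_3$; it gives you no access to the $x_3$-linear and $\nabla_{\rm tan}\vec d_0$ parts of $Z$.

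Your bookkeeping attributions are also internally inconsistent with the decomposition you would actually get: the form $III=\nabla y_0^{\trsp}\nabla\vec p+\nabla V^{\trsp}\nabla\vec b_0$ and the piece $\nabla y_0^{\trsp}\nabla_{\rm tan}\vec d_0$ of $II$ both come from the $x_3$-structure of the weak limit $Z$ (that is, from (\ref{m8})), not from the deterministic $Q_0^{\trsp}P_0$ term nor from the third column $\partial_3\vec d_0$ of $P_0$ as you state. The deterministic $I_2-\tfrac12I_1^2$ block contributes, via (\ref{m5}), only the block $\tfrac12\big(P_0^{\trsp}P_0-\tfrac12\mathcal G_2\big)_{2\times 2}=\tfrac{x_3^2}{2}(\nabla\vec b_0)^{\trsp}\nabla\vec b_0-\tfrac14(\mathcal G_2)_{2\times 2}$. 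So the step in your argument that actually carries the theorem -- identifying $Z$'s $x_3$-profile -- is missing, and the surrounding identification you sketch does not line up with where the terms actually come from.
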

\begin{proof} 
{\bf 1.} Towards estimating the energy $\mathcal{E}^h(u^h)$, we
replace the argument $\nabla u^h(x) A^h(x)^{-1}$  
of the frame invariant density $W$ by:
\begin{equation}\label{m1}
\begin{split}
& \big(Q_0\bar A^{-1}\big)(x')^{\trsp}R^h(x')^{\trsp}\nabla u^h(x) A^h(x)^{-1}
\\ & \qquad\qquad = \big(Q_0\bar A^{-1}\big)(x')^{\trsp}Q_0(x') A^h(x)^{-1} + h \bar
A(x')^{-1}Q_0(x')^{\trsp} P_0\big(x', \frac{x_3}{h}\big)  A^h(x)^{-1}
\\ & \qquad\qquad\quad \; + h^2 I_3^h\big(x',\frac{x_3}{h}\big)\qquad
\mbox{for all } x\in\Omega^h,
\end{split}
\end{equation}
where $I^h_3$ is given in (\ref{m0}). Calculating the higher order expansion of (\ref{noga}):
\begin{equation}\label{Ah_expansion}
\begin{split}
A^h(x)^{-1} =  \bar A(x')^{-1} + \bar
    A(x')^{-1}\Big(-hA_1\big(x', \frac{x_3}{h}\big) &+ h^2 A_1\big(x',
      \frac{x_3}{h}\big)   \bar A(x')^{-1}A_1\big(x',
      \frac{x_3}{h}\big) \\ & -\frac{h^2}{2}A_2\big(x',\frac{x_3}{h}\big)
\Big) \bar A(x')^{-1} + o(h^2), 
\end{split}
\end{equation}
the expressions in (\ref{m1}) can be written as:
\begin{equation}\label{m2}
\begin{split}
\big(Q_0\bar A^{-1}\big)(x')^{\trsp}& R^h(x')^{\trsp}\nabla u^h(x',
hx_3) A^h(x', hx_3)^{-1} \\ & = Id_3 + h I_1(x', x_3) + h^2\Big(I_2(x', x_3)
+ I_3^h(x', x_3)\Big) + o(h^2) \qquad \mbox{for all } x\in\Omega,
\end{split}
\end{equation}
where $I_1:\Omega\to so(3)$ and $I_2:\Omega\to \mathbb{R}^{3\times 3}$
are smooth matrix fields, given by:
\begin{equation*}
\begin{split}
& I_1(x)  = \bar A(x')^{-1}\displaystyle{\Big(Q_0(x')^{\trsp}P_0(x) - \bar
      A(x') A_1(x) \Big) }\bar A(x')^{-1} \\
&  I_2(x)  = \bar A(x')^{-1}\displaystyle{\Big(\bar A(x') A_1(x) \bar
  A(x')^{-1}A_1(x) - \frac{1}{2}\bar A(x') A_2(x) - Q_0(x')^{\trsp}P_0(x)\bar
      A(x')^{-1} A_1(x) \Big) }\bar A(x')^{-1}.
\end{split}
\end{equation*}
The fact that $I_1(x)\in so(3)$ follows from (\ref{d0}). Also, we have:
\begin{equation}\label{m0}
\begin{split}
I_3^h(x) = &~\bar A(x')^{-1}Q_0(x')^{\trsp}R^h(x')^{\trsp}Z^h(x) A^h(x',
hx_3)^{-1}\\ & \qquad \rightharpoonup I_3 (x) = \bar
A(x')^{-1}Q_0(x')^{\trsp}\bar R^{\trsp}Z(x) \bar A(x)^{-1}
\qquad\mbox{weakly in } L^2(\Omega, \mathbb{R}^{3\times 3}),
\end{split}
\end{equation}
where we used (\ref{m1}) and (\ref{r1}) to pass to the limit with $(R^h)^{\trsp}$.
As in the proof of Theorem \ref{Gliminf}, we now identify the ``good''
sets $\{|I_3^h|^2\leq 1/h\}\subset \Omega$ and employ (\ref{m2}) to
write there the following Taylor's expansion of $W(\nabla u^h(A^h)^{-1})$:
\begin{equation}\label{m3}
\begin{split}
W\Big(\nabla u^h(x', \,&hx_3)  A^h(x', hx_3)^{-1}\Big) = 
W\Big(Id_3 + h I_1(x) + h^2(I_2(x) + I_3^h(x)) + o(h^2)
\Big) \\ & = W\Big(e^{-hI_1(x)}\big(Id_3 + h I_1(x) + h^2(I_2(x) + I_3^h(x)) \big) 
+ o(h^2)\Big) \\ & 
= W\Big( Id_3+ h^2\big(I_2 -\frac{1}{2}I_1^2+ I^h_3\big) + o(h^{2})\Big)
\\ & =\frac{h^4}{2} \mathcal{Q}_3\Big(\big(I_2 -\frac{1}{2}I_1^2+
I^h_3\big)_\sym\Big) + o(h^4).
\end{split}
\end{equation}
Above, we repeatedly used the frame invariance of $W$ and the
exponential formula: 
$$e^{-h I_1}=Id_3 - h I_1 + \frac{h^2}{2}I_1^2 + \mathcal{O}(h^3).$$
Since the weak convergence in (\ref{m0}) implies convergence of
measures $\big||I_3^h|^2\geq 1/h\big|\to 0$ as $h\to 0$, with the help
of (\ref{m3}) we finally arrive at:
\begin{equation}\label{m4}
\begin{split}
\liminf_{h\to 0}\frac{1}{h^4}\frac{1}{h}\int_{\Omega^h}&W\big(\nabla u^h(x)
A^h(x)^{-1}\big) \,\rmd x  \geq \liminf_{h\to
  0}\frac{1}{2}\int_{|I_3^h|^2\leq 1/h}\mathcal{Q}_3\big(
I_2 -\frac{1}{2}I_1^2+ I^h_3\big) \,\rmd x \\ & \geq \frac{1}{2}\int_{\Omega}\mathcal{Q}_3\big(
I_2 -\frac{1}{2}I_1^2+ I_3\big) \,\rmd x =
\frac{1}{2}\int_{\Omega}\mathcal{Q}_2\Big(x', \big(\bar A
(I_2 -\frac{1}{2}I_1^2+ I_3)\bar A\big)_{2\times 2}\Big) \,\rmd x.
\end{split}
\end{equation}

\smallskip

{\bf 2.}  We now compute the effective integrand in (\ref{m4}). Firstly, by (\ref{Adef}) a direct calculation yields:
\begin{equation}\label{m5}
\big(I_2(x)-\frac{1}{2} I_1(x)^2\big)_{\sym}  =\big(I_2\big)_{\sym} 
+\frac{1}{2} I_1^{\trsp}I_1 =\displaystyle{\frac{1}{2}\bar
    A(x')^{-1}\Big(P_0(x)^{\trsp}P_0(x) - \frac{1}{2}\mathcal{G}_2(x)\Big) }\bar A(x')^{-1} 
\end{equation}

Secondly, to address the symmetric part of the limit $I_3$ in (\ref{m0}),  consider functions
$f^{s,h}:\Omega\to\mathbb{R}^3$:
$$f^{s,h}(x)= \fint_{0}^s \Big(h(\bar R^h)^{\trsp}Z^h(x', x_3+t) +S^h(x')
\big(Q_0(x') + hP_0(x', x_3+t)\big)\Big)e_3 \,\rmd t.$$
By (\ref{r2}) it easily follows that:
\begin{equation}\label{m6}
f^{s,h}\to S\vec b_0 = \vec p \qquad \mbox{ strongly in } \; L^2(\Omega,
\mathbb{R}^3), \quad \mbox{ as } h\to 0.
\end{equation}
On the other hand, we write an equivalent form of $f^{s,h}$ and
compute the tangential derivatives:
\begin{equation*}
\begin{split} 
& f^{s,h}(x)= \frac{1}{h^2s} \big(y^h(x', x_3+s) - y^h(x', x_3)\big) -
\frac{1}{h}\vec b_0(x') - \displaystyle{\frac{1}{s}\Big(\vec
      d_0(x', x_3+s) - \vec d_0(x', x_3)\Big)},\\
& \partial_if^{s,h}(x)= ~\frac{1}{s}(\bar R^h)^{\trsp}\big(Z^h(x',
x_3+s) - Z^h(x', x_3)\big)e_i + S^h(x')\partial_i\vec b_0(x') \\ & 
\qquad\qquad \qquad\qquad \qquad\qquad \qquad\qquad \qquad\qquad \qquad
-\displaystyle{\frac{1}{s}\Big(\partial_i\vec
      d_0(x', x_3+s) - \partial_i\vec d_0(x', x_3)\Big) } 
\end{split}
\end{equation*}
for $i=1,2$. In view of (\ref{r1}) and (\ref{r2}) and equating the
tangential derivatives $\partial_1, \partial_2$, result in:
$$ \bar R^{\trsp}\big(Z(x', x_3)
- Z(x', 0) \big)e_i = x_3\big( \partial_i\vec p(x') -
S(x') \partial_i\vec b_0(x') \big) + \displaystyle{\partial_i\vec
      d_0(x', x_3) - \partial_i\vec d_0(x', 0)}.$$
Further, by (\ref{m0}), (\ref{r3}) and since $S\in so(3)$, it follows that:
\begin{equation}\label{m8}
\begin{split}
\big(\bar A(x') I_3(x) &\bar A(x')\big)_{2\times 2, \sym}  = 
\Big(Q_0(x')^{\trsp}\bar R^{\trsp}Z(x)\Big)_{2\times 2,\sym} \\ &
= ~\Big(Q_0(x')^{\trsp}\bar R^{\trsp}Z(x',0)\Big)_{2\times 2,\sym}+ x_3\Big( \nabla y_0(x')^{\trsp}\nabla \vec p(x')
+ \nabla V(x')^{\trsp}\nabla\vec b_0 \Big)_{\sym}
\\ & \qquad\qquad \qquad\qquad \qquad\qquad \qquad
+ \displaystyle{\Big(Q_0(x')^{\trsp}\nabla \vec d_0(x) \Big)_{2\times 2, \sym}}
\end{split}
\end{equation}
On the other hand, taking the $x_3$-average and recalling (\ref{m7}), we get:
\begin{equation}\label{m9}
\begin{split}
\Big(Q_0(x')^{\trsp}&\bar R^{\trsp}Z(x',0)\Big)_{2\times 2,\sym}  = 
\mathbb{S}(x')+\frac{1}{2}\nabla V(x')^{\trsp}\nabla V(x')
-\Big(\nabla y_0(x')^{\trsp}\nabla\int_{-1/2}^{1/2}\vec d_0(x) \,\rmd x_3\Big)_{\sym}
\end{split}
\end{equation}

\smallskip

{\bf 3.} We now finish the proof of Theorem \ref{Gliminf4}. Combining
(\ref{m5}), (\ref{m8}) and (\ref{m9}), we see that:
$$\Big(\bar A(x')(I_2 - \frac{1}{2}I_1^2 + I_3)\bar
A(x')\Big)_{2\times 2, \sym} = \Big(I(x') +x_3III(x') + II(x)\Big)_{\sym} \qquad\mbox{on } \;\Omega,$$
where $I, II, III$ are as in (\ref{123}). In virtue of (\ref{m4}), we obtain:
\begin{equation*}
\begin{split}
\liminf_{h\to 0}\frac{1}{h^4}\frac{1}{h}&\int_{\Omega^h}W\big(\nabla u^h(x)
A^h(x)^{-1}\big) \,\rmd x  \geq 
\frac{1}{2}\int_{\Omega}\mathcal{Q}_2\Big(x',I(x') +x_3III(x') + II(x) \Big) \,\rmd x.
\end{split}
\end{equation*}
This yields the claimed lower bound by $\mathcal{I}_4^O(V,\mathbb{S})$.
\end{proof}

\medskip

For the upper bound statement, define the linear spaces:
\begin{equation}\label{VS}
\begin{split}
\mathscr V & = \Big\{V\in \WW^{2,2}(\omega, \R^3); ~ \big(\nabla
y_0(x')^{\trsp}\nabla V(x')\big)_{\sym}=0\,\mbox{ for all } x'\in\omega\Big\},\\ 
\mathscr S & = {\rm cl}_{L^2(\omega, \R^{2\times
    2})}\Big\{\big((\nabla y_0)^{\trsp}\nabla w\big)_{\sym};~ w\in W^{1,2}(\omega, \R^3)\Big\}.
\end{split}
\end{equation}
We see that  the limiting quantities $V$ and $\mathbb{S}$ in Theorem
\ref{Gliminf4} satisfy: $V\in\mathscr{V}$, $\mathbb{S}\in\mathscr{S}$.
The space $\mathscr{V}$ consists of the first order infinitesimal
isometries on the smooth minimizing immersion surface $y_0(\omega)$,
i.e. those Sobolev-regular displacements $V$ that preserve the metric on $y_0(\omega)$ up
to first order. The tensor fields $\mathbb{S}\in\mathscr{S}$ are the
finite strains on $y_0(\omega)$, eventually forcing the stretching
term in the von K\'arm\'an energy $\mathcal{I}_4^O$ to be of second order.

\begin{thm} \label{Glimsuph4}
Assume that $y_0$ solves (\ref{system}). Then, for every
$(V, \mathbb{S})\in \mathscr{V}\times \mathscr{S}$
there exists a sequence $\{u^h\in W^{1,2}(\Omega^h, \R^3)\}_{h\to 0}$
such that the rescaled sequence $\{y^h(x', x_3) = u^h(x', hx_3)\}_{h\to 0}$ satisfies (i) and (ii) of Theorem
\ref{thm: lower_bound_4}, together with:
\begin{equation}\label{up_bd} 
\lim_{h\to 0} \frac{1}{h^4}\mathcal E^h(u^h) = \mathcal{I}_4^O(V, \mathbb{S}).
\end{equation}
\end{thm}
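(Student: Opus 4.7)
The plan is to construct a recovery sequence by following the structure of the corresponding Kirchhoff-level argument in Theorem \ref{Glimsup}, but now with an extra layer of corrections tailored to the von K\'arm\'an scale and to the oscillatory profile of $\mathcal{G}_1, \mathcal{G}_2$. The guiding principle is that the ansatz should realize, after computing $\nabla u^h (A^h)^{-1}$ and Taylor-expanding, precisely the symmetrized tensor $I + x_3 III + II$ identified in Theorem \ref{Gliminf4}, so that the quadratic form $\mathcal{Q}_3$ evaluated on it projects down to $\mathcal{Q}_2(x',\cdot)$ via the optimal choice of the third column.

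First I would perform a density reduction. By the definition of $\mathscr{S}$ in (\ref{VS}), any $\mathbb{S}\in\mathscr{S}$ is the $L^2$-limit of symmetric parts of $(\nabla y_0)^{\trsp}\nabla w_n$ for smooth $w_n$, and $W^{2,2}$ infinitesimal isometries $V\in\mathscr V$ can be approximated by smooth ones (possibly after mollification followed by a Lipschitz truncation to preserve the constraint approximately). A standard diagonal argument then reduces the construction to the case where $V\in W^{2,\infty}(\omega,\R^3)$ and $\mathbb{S}=\big((\nabla y_0)^{\trsp}\nabla w\big)_{\sym}$ for some $w\in W^{1,\infty}(\omega,\R^3)$. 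The associated vector field $\vec p$ in (\ref{r3}) is then also smooth enough to use as a coefficient in the ansatz.

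Next I would write the explicit recovery ansatz as
\begin{equation*}
u^h(x',x_3)= y_0(x') + x_3\vec b_0(x') + h V(x') + h x_3 \vec p(x') + h^2 w(x') + h^2 \vec d_0\!\left(x',\tfrac{x_3}{h}\right) + h^3 \vec d_1\!\left(x',\tfrac{x_3}{h}\right),
\end{equation*}
where $\vec d_0$ is the smooth warping from (\ref{d-o})/(\ref{d0}) already used in Lemma \ref{higher_order_scaling_sequence}, and $\vec d_1:\bar\Omega\to\R^3$ is a new warping chosen slice-wise so that $\partial_3\vec d_1(x',t)$ is, for each $(x',t)$, the unique vector implementing the minimum in (\ref{cdef}) applied to the matrix $I(x')+tIII(x')+II(x',t)$; this is the direct analogue of the choice (\ref{d-o}) at the next order. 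As in Theorem \ref{Glimsup}, $V, \vec p, w$ and $\vec d_0, \vec d_1$ are then further approximated by Lipschitz fields $V^h, \vec p^{\,h}, w^h, \vec d_0^{\,h}, \vec d_1^{\,h}$ with $h$ times their $W^{1,\infty}$ norms tending to zero, in order to control the remainder of the Taylor expansion of $W$ inside the set where $|\nabla u^h(A^h)^{-1}-Id_3|$ is small.

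The main computational step is to substitute this ansatz into $\nabla u^h(A^h)^{-1}$ and expand using (\ref{Ah_expansion}). Mimicking the identities (\ref{m1})--(\ref{m9}) in reverse, one obtains
\begin{equation*}
(Q_0\bar A^{-1})^{\trsp}(\nabla u^h)(A^h)^{-1} = Id_3 + h I_1 + h^2(I_2-\tfrac12 I_1^2 + \widetilde I_3^{\,h}) + o(h^2)
\end{equation*}
uniformly, where $I_1\in so(3)$ was already skew by construction of $\vec d_0$, and the new term $\widetilde I_3^{\,h}$ is the one produced by the von K\'arm\'an-scale corrections $V, \vec p, w$ and by $\vec d_1$. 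A direct calculation, using (\ref{system}), (\ref{r3}), (\ref{d0}) and the definition of $\vec d_1$, shows that
$\big(\bar A\,(I_2-\tfrac12 I_1^2+\widetilde I_3^{\,h})\,\bar A\big)_{2\times 2,\sym}$ equals $(I+x_3 III+II)_{\sym}$ up to an $o(1)$ error in $L^2$. Frame invariance of $W$ together with Taylor's expansion then yields
\begin{equation*}
\frac{1}{h^4}\mathcal E^h(u^h) \to \frac12\int_\Omega \mathcal Q_3\big((I_2-\tfrac12 I_1^2+\widetilde I_3)_{\sym}\big)\,\rmd x = \frac12\|I+x_3 III+II\|_{\mathcal{Q}_2}^2 = \mathcal{I}_4^O(V,\mathbb{S}),
\end{equation*}
the last equality being exactly the definition of $\mathcal{Q}_2$ through the third-column minimization that $\partial_3\vec d_1$ was designed to attain. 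The convergences (i)--(ii) of Theorem \ref{thm: lower_bound_4} for this $u^h$ are immediate from the form of the ansatz.

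The main obstacle I expect is twofold. First, the bookkeeping needed to verify the pointwise matching of $\widetilde I_3$ with the prescribed target $(I+x_3 III+II)_{\sym}$ is delicate: one must treat the contribution of the oscillatory term $-\tfrac14(\mathcal{G}_2)_{2\times 2}$ in $II$ and of the tangential gradient of $\vec d_0$ consistently, and check that the zero-mean assumption $\int_{-1/2}^{1/2}\mathcal{G}_1\,\rmd t=0$ plays the same role at this order as it did in the compactness statement via (\ref{av0}). Second, the density reduction for $\mathbb{S}\in\mathscr{S}$ must be compatible with the constraint $V\in\mathscr V$, so that the diagonal extraction produces a single recovery sequence with the exact limit (\ref{up_bd}); this is handled by the standard two-parameter mollification and truncation scheme, as in the analogous step of \cite[Theorem 3.1]{LRR}.
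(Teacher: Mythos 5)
Your overall strategy matches the paper's: build a recovery ansatz whose rescaled gradient realizes the tensor $I+x_3 III+II$ at quadratic order, truncate the auxiliary fields, Taylor-expand the density, and finish with a diagonal argument. However, two of the steps you sketch are not merely glossed over but, as written, would not close.

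First, your single order-$h^3$ corrector $\vec d_1$ compresses what the paper implements through three explicitly defined fields (cf.\ (\ref{phqh})): $\vec q^{\,h}$ for the stretching component $I$, the $h$-independent $\vec k_0$ for the curvature component $II$, and $\vec r^{\,h}$ for the bending component $III$. Your prescription that $\partial_3\vec d_1(x',t)$ be "the unique vector implementing the minimum in (\ref{cdef}) applied to $I+tIII+II$" is incomplete: as in (\ref{d-o}) itself (which you cite as the model) and as in each line of (\ref{phqh}), the third-column corrector must carry, in addition to the minimizing vector $c(x',\cdot)$, the subtracted bracketed terms that cancel the non-minor entries of $(Q_0\bar A^{-1})^{\trsp}(\nabla u^h)(A^h)^{-1}$ produced by products of lower-order contributions (e.g.\ $(\nabla v^h)^{\trsp}\vec p^{\,h}$, $|\vec p^{\,h}|^2$, $(\nabla v^h)^{\trsp}\partial_3\vec d_0$, $x_3(\nabla\vec b_0)^{\trsp}\partial_3\vec d_0$, and the terms involving $\mathcal G_2 e_3$). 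Without these adjustments, the symmetric part of the order-$h^2$ matrix will not coincide with $(I+x_3III+II)_{\sym}$, and the asserted identity $\mathcal Q_3\big((I_2-\tfrac12 I_1^2+\widetilde I_3)_{\sym}\big)=\mathcal Q_2(x',I+x_3III+II)$ fails.

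Second, the blanket truncation condition "$h$ times their $W^{1,\infty}$ norms tending to zero" is both the wrong norm (the relevant one for $v^h$ is $W^{2,\infty}$) and, for general $V\in W^{2,2}$, incompatible with the second, equally essential requirement $\frac{1}{h^2}|\{v^h\neq V\}|\to 0$ used in (\ref{before}) and the residual bounds. The paper instead imposes $h\|v^h\|_{W^{2,\infty}}\leq\varepsilon$ for a fixed small $\varepsilon$, derives an error bounded by $C\varepsilon$, and only then diagonalizes in $\varepsilon$; the other warping fields come with distinct rates $\sqrt h\|w^h\|_{W^{2,\infty}}\to 0$ and $\sqrt h\|\vec r^{\,h}\|_{W^{1,\infty}}\to 0$. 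You do flag the diagonal extraction at the end, but the interplay between the $\varepsilon$-bounded truncation, the smallness of $\big((\nabla y_0)^{\trsp}\nabla v^h\big)_{\sym}$ on the good set, and the cubic residuals of the Taylor expansion is precisely where the proof lives, and none of it is verified.
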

\begin{proof} 
{\bf 1.} Given admissible $V$ and  $\mathbb{S}$, we first define the
$\varepsilon$-recovery sequence $\{u^h\in W^{1,\infty}(\Omega^h,
\R^3)\}_{h\to 0}$. The ultimate argument for (\ref{up_bd}) will be
obtained via a diagonal argument. We set:
\[
\begin{aligned}
u^h(x', x_3) = & \, \, y_0(x')+ h v^h(x')+h^2w^h(x')+x_3\vec b_0(x')+
h^2\vec d_0\big(x', \frac{x_3}{h}\big) \\
& + h^3 \vec k_0\big(x', \frac{x_3}{h}\big) + hx_3\vec
p^{\,h} (x') + h^2x_3\vec q^{\,h}(x') + h^3\vec r^{\,h}\big(x',
\frac{x_3}{h}\big) \qquad \mbox{for all }\, (x',x_3)\in \Omega^h.
\end{aligned}
\]
The smooth vector fields $\vec b_0$ and $\vec d_0$ are as in
(\ref{system}), (\ref{d0}). We now introduce other terms in the above expansion.
The sequence $\{w^h\in \mathcal{C}^{\infty}(\overline \omega, \R^3)\}_{h\to 0}$ is such that: 
\begin{equation}\label{wh}
\begin{split}
& \Big((\nabla y_0)^{\trsp}\nabla \big(w^h+\int_{-1/2}^{1/2}\vec
d_0(\cdot,t)\,\rmd t\big)\Big)_{\sym}\to \mathbb S \quad\mbox{
  strongly in }\, \LL^2(\omega, \R^{2\times 2}) ~~\mbox{ as } h\to 0, \\
& \lim_{h\to 0}\sqrt{h}\|w^h\|_{\WW^{2,\infty}(\omega, \R^3)}=0.
\end{split}
\end{equation}
Existence of such a sequence is guaranteed by the fact that $\mathbb
S\in \mathscr S$, where we ``slow down'' the approximations $\{w^h\}$
to guarantee the blow-up rate of order less that $h^{-1/2}$.
Further, for a fixed small $\varepsilon >0$, the truncated sequence $\{v^h\in
W^{2,\infty}(\omega, \R^3)\}_{h\to 0}$ is chosen according to the
standard construction in \cite{FJM02} (see also references therein), in a way that: 
\begin{equation}\label{vh}
\begin{split}
& v^h\to V\quad\mbox{ strongly in }\,W^{2,2}(\omega,\R^3) ~~ \mbox{ as }\,h\to 0,\\
& h\|v^h\|_{W^{2,\infty}(\omega, \R^3)}\leq \varepsilon\quad\mbox{ and
}\quad\lim_{h\to 0}\frac{1}{h^2}\,\big|\{x'\in \omega;~ v^h(x')\neq
V(x')\}\big|=0. 
\end{split}
\end{equation}
The vector field $\vec k_0\in \mathcal{C}^{\infty}(\bar\Omega, \R^3)$
and sequences $\{\vec p^{\,h}, \vec q^{\,h}\in 
W^{1,\infty}(\omega, \R^3)\}_{h\to 0}$, $\{\tilde r^h\in \LL^{\infty}(\Omega, \R^3)\}_{h\to 0}$ are defined by:
\begin{equation}\label{phqh}
\begin{aligned}
Q_0^{\trsp}\vec p^{\,h} =&\,\left[\begin{matrix}
-(\nabla v^h)^{\trsp}\vec b_0\\ 0\end{matrix}\right],\\
Q_0^{\trsp}\vec q^{\,h} = &\, c\Big(x',\big((\nabla
y_0)^{\trsp}\nabla w^h\big)_{\sym}+\frac{1}{2}(\nabla
v^h)^{\trsp}\nabla v^h\Big)-\left[\begin{matrix}
(\nabla v^h)^{\trsp}\vec p^{\,h}\\ \frac{1}{2}|\vec p^{\,h}|^2\end{matrix}\right]-\left[\begin{matrix}
(\nabla w^h)^{\trsp}\vec b_0\\ 0\end{matrix}\right],\\
Q_0^{\trsp} \partial_3 \vec k_0=&\, c\Big(x',  \big((\nabla
y_0)^{\trsp}\nabla_{\rm tan} \vec d_0\big)_{\sym} + \frac{x_3^2}{2}(\nabla \vec b_0)^{\trsp}\nabla 
\vec b_0 - \frac{1}{4}(\mathcal G_2)_{2\times 2}\Big) \vspace{1mm} \\  & - \left[\begin{matrix}
x_3(\nabla \vec b_0)^{\trsp}\partial_3 \vec d_0\\
\frac{1}{2}|\partial_3 \vec d_0|^2\end{matrix}\right]
+ \left[\begin{matrix} (\nabla_{\rm tan} \vec d_0)^{\trsp}\vec b_0\\
0\end{matrix}\right]  + \frac{1}{2}\mathcal G_2e_3-\frac{1}{4} ({\mathcal G}_2)_{33}e_3,\\
Q_0^{\trsp}\tilde r^h =&\, x_3 c\Big(x', \big((\nabla
y_0)^{\trsp}\nabla \vec p^{\,h}+ (\nabla v^h)^{\trsp}\nabla
\vec b_0\big)_{\sym}\Big)-\left[\begin{matrix} 
(\nabla v^h)^{\trsp}\partial_3 \vec d_0\\
\langle \vec p^{\,h}, \partial_3 \vec d_0\rangle\end{matrix}\right].
\end{aligned}
\end{equation}
Finally, we choose $\{\vec r^{\,h}\in W^{1,\infty}(\Omega, \R^3)\}_{h\to  0}$ to satisfy:
\begin{equation}\label{rhh}
\lim_{h\to 0}\|\partial_3 \vec r^{\,h}-\tilde r^h\|_{\LL^2(\Omega,
  \R^3)}=0\quad\mbox{ and }\quad \lim_{h\to
  0}\sqrt{h}\|\vec r^{\,h}\|_{\WW^{1,\infty}(\Omega,\R^3)}=0. 
\end{equation}

\medskip

{\bf 2.} Observe that for all $(x',x_3)\in\Omega$ there holds::
\[
\begin{aligned}
\nabla u^h(x',hx_3)= & \, Q_0 + h\Big(\big[\nabla v^h,~
\vec p^{\,h}\big] + P_0\Big) + h^2\Big(\big[\nabla w^h, ~\vec
q^{\,h}\big]+\big[x_3\nabla \vec p^{\,h},~ \partial_3\vec r^{\,h}\big] +
\big(\nabla_{\rm tan}\vec d_0,~\partial_3\vec k_0\big]\Big) \\ & + \mathcal{O}(h^3)\Big(
|\nabla \vec k_0| + |\nabla \vec q^{\,h}| + |\nabla \vec r^{\,h}|\Big).
\end{aligned}
\]
Consequently, by (\ref{Ah_expansion}) it follows that:
\[
\big((\nabla u^h)(A^h)^{-1}\big)(x',hx_3) = Q_0\bar A^{-1}\Big(Id_3 + h
\bar A^{-1} J_1^h \bar A^{-1} + h^2 \bar A^{-1}J_2^h \bar A^{-1} + J_h^3\Big),
\]
where:
\[
\begin{aligned}
J_1^h & = Q_0^{\trsp}\Big(\big[\nabla v^h, ~\vec p^{\,h}\big] + P_0\Big) -
\bar A A_1,\\
J_2^h & = Q_0^{\trsp}\Big(\big[\nabla w^h, ~\vec q^{\,h}\big] +
\big[x_3\nabla \vec p^{\,h}, ~\partial_3\vec r^{\,h}\big] + \big[\nabla \vec d_0, ~\partial_3\vec k_0\big] \Big) -
J_1^h \bar A^{-1} A_1 - \frac{1}{2}\bar A A_2,
\end{aligned}
\]
and where $J_1^h$, $J_2^h$, $J_3^h$ satisfy the uniform bounds (independent of $\varepsilon$):
\[
\begin{aligned}
|J_1^h| & \leq C \big( 1+ |\nabla v^h|\big), \\
|J_2^h| & \leq C \big(1+ |\nabla w^h| + |\nabla v^h|^2 + |\nabla^2 v^h| + |\nabla \vec r^{\,h}|\big),\\
|J_3^h| & \leq C h^3 \big(1+ |\nabla w^h| + |\nabla^2 w^h| + |\nabla v^h|^2 + |\nabla^2 v^h|
+ |\nabla v^h|\cdot|\nabla^2 v^h|+|\nabla \vec r^{\,h}|\big) + o(h^2).
\end{aligned}
\]
In particular, the distance $\dist\big((\nabla u^h) (A^h)^{-1}, SO(3)\big)\leq |
(\nabla u^h) (A^h)^{-1}- Q_0\bar A^{-1}|$ is as small as one wishes,
uniformly in $x\in\Omega$, for $h$ sufficiently small. Thus, the
argument $ (\nabla u^h) (A^h)^{-1}$ of the frame invariant density $W$ in $\mathcal{E}^h(u^h)$
may be replaced by its polar decomposition  factor:
\[
\begin{aligned}
\Big(\big(\nabla u^h(A^h)^{-1}\big)^{\trsp}\big(\nabla u^h(A^h)^{-1}\big)\Big)^{1/2}
& = \Big( Id_3 + 2h^2 \bar A^{-1} \big( (J_2^h)_{\sym} + \frac{1}{2}
(J_1^h)^{\trsp}\bar A^{-2} J_1^h\big) \bar A^{-1} +\mathcal{R}^h\Big)^{1/2}\\ 
& =  Id_3 + h^2 \bar A^{-1} \big( (J_2^h)_{\sym} + \frac{1}{2}
(J_1^h)^{\trsp}\bar A^{-2} J_1^h\big) \bar A^{-1} + \mathcal{R}^h,
\end{aligned}
\]
where $\mathcal{R}^h$ stands for any quantity obeying the following bound:
\[
\begin{split}
\mathcal R^h = & ~\mathcal{O}(h) |(J_1^h)_{\sym}| + \mathcal{O}(h^3)\big(
1+|\nabla v^h|\big)\big(1+|\nabla w^h| + |\nabla v^h|^2 +|\nabla^2
v^h|+|\nabla \vec r^{\,h}|\big) \\ & + \mathcal{O}(h^3)|\nabla^2w^h| + o(h^2).
\end{split}
\]

In conclusion, Taylor's expansion of $W$ at $Id_3$ gives:
\begin{equation}\label{TE1}
\begin{aligned}
\frac{1}{h^4}\int_{\Omega}W\Big((\nabla
u^h)&(A^h)^{-1}(x',hx_3)\Big)\,\rmd x \\ = & \,
\frac{1}{h^4}\int_{\Omega}W\Big(\big(\big(\nabla
u^h(A^h)^{-1}\big)^{\trsp}\big(\nabla u^h(A^h)^{-1}\big)\big)^{1/2}(x', hx_3)\Big) 
\,\rmd x  \\ \leq & \, \frac{1}{2}\int_{\Omega}\mathcal Q_3\Big(\bar A^{-1} \big(
(J_2^h)_{\sym} + \frac{1}{2}(J_1^h)^{\trsp}\bar A^{-2}J_1^h\big)\bar
A^{-1} + \frac{1}{h^2}\mathcal{R}^h\Big)\,\rmd x \\ & + \mathcal{O}(h^2) \int_\Omega
|J_2^h|^3 + |J_1^h|^6 \,\rmd x + \frac{\mathcal{O}(1)}{h^4}\int_\Omega |\mathcal{R}^h|^3 \,\rmd x.
\end{aligned}
\end{equation}
The residual terms above are estimated as in \cite{LRR}, using
(\ref{wh}), (\ref{vh}), (\ref{rhh}). We have:
$$h^2 \int_\Omega|J_2^h|^3 + |J_1^h|^6 \,\rmd x \leq h^2 \int_\Omega
1+ |\nabla w^h|^3 + |\nabla v^h|^6 + |\nabla^2 v^h|^3 + |\nabla \vec r^{\,h}|^3
\,\rmd x  \leq o(1),$$
as $ h^2 \int_\Omega|\nabla v^h|^6 \,\rmd x \leq C h^2 \|\nabla
v^h\|_{W^{1,2}}^6 = o(1)$ and $
h^2 \int_\Omega|\nabla^2 v^h|^3 \,\rmd x \leq \epsilon h\int_\Omega
|\nabla^2 v^h|^2 \,\rmd x= o(1)$. Further:
\begin{equation*}
\begin{aligned}
\frac{1}{h^4}\int_\Omega |\mathcal{R}^h|^2 \,\rmd x & \leq 
\frac{1}{h^2}\int_\Omega \big|\big((\nabla y_0)^{\trsp}\nabla
v^h\big)_{\sym}\big|^2 \,\rmd x \\ & \qquad + \mathcal{O}(h^2) \int_\Omega
\big(1+|\nabla v^h|^2\big)\big(1+|\nabla w^h|^2+|\nabla v^h|^4 +
|\nabla^2 v^h|^2+|\nabla \vec r^{\,h}|^2\big) \,\rmd x \\ & \qquad + \mathcal{O}(h^2)
\int_\Omega |\nabla^2w^h|^2 \,\rmd x + o(1) \\ & = o(1) + \mathcal{O}(h^2)
\int_\Omega |\nabla v^h|\cdot |\nabla^2 v^h|^2 \leq C\varepsilon,
\end{aligned}
\end{equation*}
because the last condition in (\ref{vh}) implies:
\begin{equation}\label{before}
\begin{split}
\frac{1}{h^2}\int_\Omega \big|\big((\nabla y_0)^{\trsp}&\nabla
v^h\big)_{\sym}\big|^2 \,\rmd x  \leq \frac{C}{h^2}\|\nabla^2
v^h\|_{L^\infty}\int_{\{v^h\neq V\}} \dist^2(x', \{v^h=V\}) \,\rmd x'
\\ & \leq \frac{C\varepsilon^2}{h^4} \int_{\{v^h\neq V\}} \dist^2(x',
\{v^h=V\}) \,\rmd x'\leq C\varepsilon^2 \frac{1}{h^2}\big|\{v^h\neq
V\}\big| = o(1).
\end{split}
\end{equation}
From the two estimates above  it also follows that
$\frac{1}{h^4}\int_\Omega |\mathcal{R}^h|^3 \,\rmd x  =
o(1)$. Consequently, (\ref{TE1}) yields:
\begin{equation}\label{TE2}
\limsup_{h\to 0}\frac{1}{h^4}\mathcal E^h(u^h)\leq C\varepsilon + 
\limsup_{h\to 0}\frac{1}{2}\int_\Omega \mathcal Q_3\Big(\bar A^{-1} \big(
(J_2^h)_{\sym} + \frac{1}{2}(J_1^h)^{\trsp}\big)\bar
A^{-2}J_1^h\big)\bar A^{-1}\Big)\,\rmd x.
\end{equation}

\medskip

{\bf 3.} Observe now that:
\[
\begin{aligned}
(J_2^h)_{\sym} + \frac{1}{2}(J_1^h)^{\trsp}\bar A^{-2}J_1^h = &  
- \Big(\big((\nabla y_0)^{\trsp}\nabla v^h\big)_{\sym}^*\bar
A^{-1}A_1\Big)_{\sym} \\ & + \Big(Q_0^{\trsp}\big[\nabla w^h,~\vec q^{\,h}\big]
+ Q_0^{\trsp}\big[x_3\nabla \vec p^{\,h},~ \partial_3\vec r^{\,h}\big] + Q_0^{\trsp}\big[\nabla \vec
d_0,~\partial_3\vec k_0\big]\Big)_{\sym} \\ & + \frac{1}{2}\big[\nabla v^h,
~ \vec p^{\,h}\big]^{\trsp}\big[\nabla v^h,~ \vec p^{\,h}\big] + \Big(\big[\nabla v^h,
~ \vec p^{\,h}\big]^{\trsp} P_0\Big)_{\sym} + \frac{1}{2}P_0^{\trsp}P_0 - \frac{1}{4}\mathcal{G}_2.
\end{aligned}
\]
Replacing $\partial_3\vec r^{\,h}$ by $\tilde r^h$ and using (\ref{phqh}), it follows that:
\[
\begin{aligned}
\bigg(\int_\Omega \mathcal{Q}_3\Big( &\bar A^{-1}\big((J_2^h)_{\sym} +
\frac{1}{2}(J_1^h)^{\trsp}\bar A^{-2}J_1^h\big)\bar A^{-1}\Big) \,\rmd x \bigg)^{1/2}
\\ \leq &   \bigg(\int_\Omega \mathcal{Q}_2\Big(x', \big((\nabla y_0)^{\trsp}\nabla
w^h\big)_{\sym} + \frac{1}{2}(\nabla v^h)^{\trsp}\nabla v^h +
x_3\big((\nabla y_0)^{\trsp}\nabla \vec p^{\,h}+(\nabla
v^h)^{\trsp}\nabla\vec b_0\big)_{\sym} \\ & \qquad\qquad\qquad
\qquad\qquad\qquad  \qquad\qquad\qquad \qquad\qquad +\frac{x_3^2}{2}(\nabla \vec
b_0)^{\trsp}\nabla \vec b_0 - \frac{1}{4}(\mathcal{G}_2)_{2\times 2}
\Big) \,\rmd x \bigg)^{1/2} \\ & + \|\big((\nabla y_0)^{\trsp}\nabla
v^h\big)_{\sym} \|_{L^2(\Omega)} + \|\partial_3\vec r^{\,h} - \tilde r^h\|_{L^2(\Omega)}.
\end{aligned}
\]
The second term above converges to $0$ by (\ref{before}) and the third
term also converges to $0$, by (\ref{rhh}). On the other hand, the
first term can be split into the integral on the set $\{v^h=V\}$, whose
limit as $h\to 0$ is estimated by $\mathcal{I}_4^O(V, \mathbb{S})$, and
the remaining integral that is bounded by:
\begin{equation*}
\begin{split}
C&\int_{\{v^h\neq V\}\times (-\frac{1}{2}, \frac{1}{2})}  1+ |\nabla w^h|^2 + |\nabla v^h|^4 + |\nabla^2
v^h|^2 + |\nabla \vec r^{\,h}|^3 \,\rmd x \\ & \qquad \leq C \epsilon^2 \frac{1}{h^2}
|\{v^h\neq V\}| + C\int_{\{v^h\neq V\}}|\nabla
v^h|^4 \,\rmd x' \leq o(1) + C |\{v^h\neq V\}|^{1/2} \|\nabla
v^h\|_{L^8}^4 = o(1).
\end{split}
\end{equation*}

In conclusion, (\ref{TE2}) becomes (with a uniform constant $C$ that
does not depend on $\varepsilon$):
\begin{equation*}
\limsup_{h\to 0}\frac{1}{h^4}\mathcal E^h(u^h)\leq C\varepsilon + 
\mathcal{I}_4^{O}(V, \mathbb{S}).
\end{equation*}
A diagonal argument applied to the indicated $\varepsilon$-recovery sequence
$\{u^h\}_{h\to 0}$ completes the proof.
\end{proof}

\begin{cor}\label{26new}
The functional $\mathcal{I}_4^O$  attains its infimum and there holds:
$$\lim_{h\to 0} \frac{1}{h^4}\inf\mathcal{E}^h =\min \;\mathcal{I}_4^O.$$
The infima  in the left hand side are taken over 
$W^{1,2}(\Omega, \mathbb{R}^3)$ deformations $u^h$, whereas the minimum in
the right hand side is taken over admissible displacement-strain couples $(V,\mathbb{S})\in
\mathscr V\times \mathscr S$ in (\ref{VS}).
\end{cor}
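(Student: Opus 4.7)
The plan is to deduce this corollary from the $\Gamma$-convergence package of Theorems \ref{thm: lower_bound_4}, \ref{Gliminf4} and \ref{Glimsuph4}, in direct parallel to the Kirchhoff-regime Corollary \ref{limit_scaled}. I will establish the two matching inequalities separately, and attainment of the minimum will fall out of the lower-bound argument applied to a near-minimizing sequence of deformations.

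For the upper bound $\limsup_{h\to 0}\frac{1}{h^4}\inf\mathcal{E}^h \leq \inf\mathcal{I}_4^O$, I pick an arbitrary admissible pair $(V,\mathbb{S})\in\mathscr{V}\times\mathscr{S}$ and invoke Theorem \ref{Glimsuph4} to produce a recovery sequence $\{u^h\in W^{1,2}(\Omega^h,\mathbb{R}^3)\}_{h\to 0}$ satisfying $\lim_{h\to 0}\frac{1}{h^4}\mathcal{E}^h(u^h)=\mathcal{I}_4^O(V,\mathbb{S})$. Since each $u^h$ is admissible for $\inf\mathcal{E}^h$, one obtains $\limsup_{h\to 0}\frac{1}{h^4}\inf\mathcal{E}^h \leq \mathcal{I}_4^O(V,\mathbb{S})$, and passing to the infimum in $(V,\mathbb{S})$ concludes the direction. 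Finiteness of $\inf\mathcal{I}_4^O$ is automatic upon taking $(V,\mathbb{S})=(0,0)$ in (\ref{123}).

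For the lower bound, I select for each small $h$ a near-minimizer $u^h$ with $\mathcal{E}^h(u^h)\leq \inf\mathcal{E}^h + h^5$. Lemma \ref{higher_order_scaling_sequence}, applicable under the standing assumption (\ref{vonKa}), gives $\inf\mathcal{E}^h\leq Ch^4$, so $\{u^h\}$ satisfies the quartic bound $\mathcal{E}^h(u^h)\leq C'h^4$ required by Theorem \ref{thm: lower_bound_4}. After composing with the rotations $\bar R^h$ provided by that theorem (which leaves $\mathcal{E}^h$ invariant by the frame-invariance hypothesis (i) on $W$) and passing to a subsequence, I extract a limiting pair $(V,\mathbb{S})$. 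Items (ii) and (iii) of Theorem \ref{thm: lower_bound_4} together with the definition (\ref{VS}) place $(V,\mathbb{S})\in\mathscr{V}\times\mathscr{S}$, using the fact that weak $L^2$ limits of elements of a linear subspace lie in its strong $L^2$ closure. Theorem \ref{Gliminf4} then delivers $\liminf_{h\to 0}\frac{1}{h^4}\mathcal{E}^h(u^h)\geq \mathcal{I}_4^O(V,\mathbb{S})\geq \inf\mathcal{I}_4^O$, which closes the identity.

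Attainment is then immediate: the limiting pair $(V,\mathbb{S})$ produced in the previous step satisfies $\mathcal{I}_4^O(V,\mathbb{S})\leq \inf\mathcal{I}_4^O$, hence realizes the infimum, so $\min\mathcal{I}_4^O=\inf\mathcal{I}_4^O$. I do not anticipate a serious obstacle; the only points requiring care are the frame-invariance argument permitting the rotation adjustment $\bar R^h$ without altering $\mathcal{E}^h$, and the verification that the limit strain $\mathbb{S}$ belongs to $\mathscr{S}$, both of which are routine.
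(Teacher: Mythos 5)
Your proof is correct and follows exactly the standard $\Gamma$-convergence argument the paper has in mind when it states Corollary \ref{26new} without proof (just as Corollary \ref{limit_scaled} is stated as a direct consequence of Theorems \ref{Gliminf} and \ref{Glimsup}): a diagonal/near-minimizer selection plus Lemma \ref{higher_order_scaling_sequence} for the lower bound, Theorem \ref{Glimsuph4} for the upper bound, and Mazur's theorem to place $\mathbb{S}\in\mathscr{S}$. The only cosmetic remark is that no extra composition of $u^h$ with $\bar R^h$ is needed: Theorem \ref{thm: lower_bound_4} already builds $(\bar R^h)^{\trsp}$ into the rescaled fields $y^h$, and Theorem \ref{Gliminf4} is stated ``in the setting of Theorem \ref{thm: lower_bound_4},'' so it applies to $\mathcal{E}^h(u^h)$ directly.
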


\section{Further discussion of $\mathcal{I}_4^O$ and reduction to the
  non-oscillatory case (\ref{NO})}\label{sec77}

In this section, we identify the appropriate components of the integrand in
the energy $\mathcal{I}_4^O$ as: stretching, bending, curvature and
the order-$4$ excess, the latter quantity being the projection of the
entire integrand on the orthogonal complement of
$\mathbb{E}_2$ in $\mathbb{E}$. This superposition is in the same spirit, as the integrand of
$\mathcal{I}_2^O$ in Theorem \ref{Gliminf} decoupling
into bending and the order-$2$ excess, defined as the projection on
the orthogonal complement of $\mathbb{E}_1$. 
There, the assumed condition $\int_{-1/2}^{1/2}\mathcal{G}_1
\,\rmd x_3=0$ served as the compatibility criterion, assuring that the
$2$-excess being null results in ${\mathcal{I}}_4^O$ coinciding  
with the non-oscillatory limiting energy ${\mathcal{I}}_4$, written for the
effective metric $\bar G$ in (\ref{EO}).  
Below, we likewise derive the parallel version $\mathcal{I}_4$  of 
${\mathcal I}_4^O$, corresponding to the non-oscillatory
case, and show that the vanishing of the $4$-excess reduces
${\mathcal{I}}_4^O$ to ${\mathcal{I}}_4$ (for the effective metric
(\ref{EO})), under two new further compatibility conditions (\ref{constr}) on $(\mathcal{G}_2)_{2\times 2}$.

\medskip

The following formulas will be useful in the sequel:
\begin{lemma}\label{new_basis}
In the non-oscillatory setting (\ref{NO}), let $y_0$, $\vec b_0$ be as
in (\ref{system}) and $\tilde d_0$ as in (\ref{d0no}). Then:
\begin{equation}\label{newba}
\big[\partial_{ij}y_0, ~ \partial_i\vec b_0, ~ \tilde d_0\big](x') =
\big[\partial_1y_0, ~ \partial_2\vec y_0, ~ \vec b_0\big](x')\cdot
\left[\begin{array}{ccc}\Gamma_{ij}^1 & \Gamma_{i3}^1 &  \Gamma_{33}^1 \\
\Gamma_{ij}^2 & \Gamma_{i3}^2 &  \Gamma_{33}^2 \\
\Gamma_{ij}^3 & \Gamma_{i3}^3 &  \Gamma_{33}^3
\\ \end{array}\right](x',0) \qquad\mbox{for }\; i,j=1,2,
\end{equation}
for all $x'\in\omega$.
Consequently, for any smooth vector field $\vec q:\omega\to\R^3$ there holds:
$$\Big[\nabla y_0(x')^{\trsp}\nabla \big(Q_0(x')^{\trsp, -1}\vec q(x')\big)\Big]_{i,j=1,2}
= \nabla \vec q(x')_{2\times 2} - \Big[\Big\langle \vec q(x'), [\Gamma_{ij}^1,
~ \Gamma_{ij}^2, ~ \Gamma_{ij}^3](x',0)\Big\rangle \Big]_{i,j=1,2}.$$
Above, $\{\Gamma_{ij}^k\}$ are the Christoffel symbols of the metric
$G$ and the expression in the right and side represents the tangential
part of the covariant derivative of the $(0,1)$ tensor field $\vec q$
with respect to $G$.
\end{lemma}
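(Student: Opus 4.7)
My plan is to recognize the columns $\{e_1, e_2, e_3\} = \{\partial_1 y_0, \partial_2 y_0, \vec b_0\}$ of $Q_0$ as a frame compatible with the ambient metric at $x_3 = 0$, since $Q_0^{\trsp}Q_0 = \bar{\mathcal{G}} = G(\cdot, 0)$ gives $\langle e_j, e_\ell\rangle = G_{j\ell}(x', 0)$ for all $j, \ell = 1, 2, 3$. With this in hand, (\ref{newba}) reduces to the frame expansions $\partial_i e_j = \Gamma_{ij}^k(x', 0) e_k$ (for $i = 1, 2$ and $j = 1, 2, 3$) together with $\tilde d_0 = \Gamma_{33}^k(x', 0) e_k$; the identification of the right-hand side of the second claim with the tangential covariant derivative of $\vec q$ will then follow by a short computation.

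For the case $j = 1, 2$, I would write $\partial_{ij} y_0 = c^k_{ij} e_k$, differentiate $\langle e_j, e_\ell\rangle = G_{j\ell}(x', 0)$ in $x_i$, and use the Clairaut symmetry $\partial_i e_j = \partial_j e_i$ exactly as in the classical derivation of the Christoffel symbols of the first kind; the standard cyclic combination yields $c^m_{ij} G_{m\ell}(x', 0) = \Gamma_{ij, \ell}(x', 0)$, hence $c^k_{ij} = \Gamma_{ij}^k(x', 0)$. The case $j = 3$ is the delicate one, because Clairaut symmetry is unavailable; here the substitute constraint is provided by (\ref{system}), which supplies $\langle \partial_i y_0, \partial_j \vec b_0\rangle + \langle \partial_j y_0, \partial_i \vec b_0\rangle = \partial_3 G_{ij}(x', 0)$ for $i, j = 1, 2$, together with $\langle \vec b_0, \partial_i \vec b_0\rangle = \frac{1}{2}\partial_i G_{33}(x', 0)$ (from differentiating $|\vec b_0|^2 = G_{33}(x', 0)$) and $\langle \partial_j y_0, \partial_i \vec b_0\rangle = \partial_i G_{j3}(x', 0) - \langle \partial_{ij} y_0, \vec b_0\rangle$ (from differentiating $\langle \partial_j y_0, \vec b_0\rangle = G_{j3}(x', 0)$). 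Combining these with the already-determined coefficients for $j = 1, 2$ produces a determined linear system whose solution is $c^k_{i3} = \Gamma_{i3}^k(x', 0)$. For $\tilde d_0$, I would take (\ref{d0no}), multiply by $Q_0^{\trsp}$, and verify entry-by-entry that its components along $e_1, e_2, e_3$ equal $\partial_3 G_{\ell 3}(x', 0) - \frac{1}{2}\partial_\ell G_{33}(x', 0) = \Gamma_{33, \ell}(x', 0)$ for $\ell = 1, 2$ and $\frac{1}{2}\partial_3 G_{33}(x', 0) = \Gamma_{33, 3}(x', 0)$, after observing that $\langle (\nabla \vec b_0)^{\trsp}\vec b_0, e_i\rangle = \frac{1}{2}\partial_i G_{33}(x', 0)$ for $i = 1, 2$.

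For the consequence on $\vec q$, I would set $\vec w = Q_0^{\trsp, -1}\vec q$ so that the defining relation reads $\langle e_m, \vec w\rangle = q_m$ for $m = 1, 2, 3$. Differentiating in $x_j$ for $j = 1, 2$ and using the frame expansion $\partial_j e_i = \Gamma_{ij}^k(x', 0) e_k$ for $i = 1, 2$ yields $\partial_j q_i = \Gamma_{ij}^k(x', 0)\langle e_k, \vec w\rangle + \langle e_i, \partial_j \vec w\rangle = \Gamma_{ij}^k(x', 0) q_k + \bigl[\nabla y_0(x')^{\trsp}\nabla(Q_0^{\trsp, -1}\vec q)\bigr]_{ij}$, and rearranging gives the stated formula, interpreted as the tangential part of the covariant derivative of the $(0,1)$-tensor $\vec q$. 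The principal obstacle in the whole argument is organising the $j = 3$ case of the frame expansion: one must recognise that the single symmetric condition (\ref{system}) on $\vec b_0$, combined with the scalar identities coming from $\langle e_j, e_3\rangle = G_{j3}(x', 0)$ for $j = 1, 2, 3$, is precisely the information needed to pin down all nine coefficients $c^k_{i3}$, and that the resulting values satisfy the same cyclic identity that produces the Christoffel symbols $\Gamma_{i3}^k$.
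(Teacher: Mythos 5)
Your proof is correct and follows essentially the same route as the paper: both compute the scalar products $\langle \partial_{ij}y_0, \partial_\ell y_0\rangle$, $\langle \partial_{ij}y_0, \vec b_0\rangle$, $\langle \partial_i\vec b_0, \partial_jy_0\rangle$, $\langle \partial_i\vec b_0, \vec b_0\rangle$ from (\ref{y2}), (\ref{system}), and differentiation of $Q_0^{\trsp}Q_0 = \bar{\mathcal{G}}$, then read off the coordinates in the frame $Q_0 = [\partial_1 y_0, \partial_2 y_0, \vec b_0]$ as the Christoffel symbols of $G$ at $x_3=0$, with the second identity following by differentiating $\langle e_m, Q_0^{\trsp,-1}\vec q\rangle = q_m$. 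One small presentational caveat: the ``case $j=1,2$'' cannot actually close by the cyclic/Clairaut argument alone, since the $\ell = 3$ equation for $\langle \partial_{ij}y_0, \vec b_0\rangle$ already requires the substitute condition from (\ref{system}) that you invoke only in the ``$j=3$'' paragraph — so the two cases are interdependent rather than sequential, but all the needed identities are present and the linear system is determined as you claim.
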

\begin{proof}
In view of $\big((\nabla y_0)^{\trsp}\nabla \vec b_0\big)_{\sym} = \frac{1}{2}\partial_3G(x',0)_{2\times
  2}$ in (\ref{system}) and recalling (\ref{y0}), we get:
$$\langle \partial_{ij}y_0, \vec b_0\rangle =
\frac{1}{2}\big(\partial_iG_{j3}+\partial_jG_{i3}-\partial_3G_{ij}\big)(x',0)\qquad\mbox{for
 all } i,j=1,2,$$
which easily results in:
\begin{equation*}
\langle \partial_i\vec b_0, \partial_jy_0\rangle =
\frac{1}{2}\big(\partial_iG_{j3}-\partial_jG_{i3}+\partial_3G_{ij}\big)(x',0)
\quad\mbox{ and } \quad \langle \partial_i\vec b_0, \vec b_0\rangle =
\frac{1}{2}\partial_iG_{33}(x',0).
\end{equation*}
Thus (\ref{y2}) and the above allow for computing the
coordinates in the basis $\partial_1y_0, \partial_2y_0, \vec b_0$ as
claimed in (\ref{newba}); see also \cite[Theorem 6.2]{LRR} for more
details. The second formula results from:
\begin{equation*}
\begin{split}
\big\langle \partial_iy_0, \partial_j \big(Q_0^{\trsp, -1}\vec q\big)\big\rangle
& = \big\langle \partial_iy_0, \partial_j \big(Q_0^{\trsp, -1}\big)\vec q\big\rangle
+ \big\langle \partial_iy_0, Q_0^{\trsp, -1}\partial_j\vec q\big\rangle \\ & = 
- \big\langle \partial_iy_0, Q_0^{\trsp, -1}\partial_j \big(Q_0^{\trsp}\big) Q_0^{\trsp, -1}\vec q\big\rangle
+ \big\langle Q_0^{-1}\partial_iy_0, \partial_j\vec q\big\rangle \\ & = 
- \big\langle Q_0^{-1}\partial_j \big(Q_0\big)e_i, \vec q\big\rangle
+ \big\langle e_i, \partial_j\vec q\big\rangle,
\end{split}
\end{equation*}
which together with (\ref{newba}) yields the Lemma.
\end{proof}

\begin{lemma}\label{pomoc_4}
In the non-oscillatory setting (\ref{NO}), let $y_0$, $\vec b_0$ be as
in (\ref{system}) and $\tilde d_0$ as in (\ref{d0no}). Then
the metric-related term $II$ in (\ref{123}) has the form
$II = \frac{x_3^2}{2}\bar{II}(x')$ and for all $x'\in \omega$ we have:
\begin{equation}\label{tensor2}
\bar{II}_{\sym} = (\nabla \vec b_0)^{\trsp}\nabla \vec b_0+
\big((\nabla y_0)^{\trsp}\nabla\tilde d_0\big)_{\sym} - \frac{1}{2}\partial_{33}G(x', 0)_{2\times 2}
  = \left[\begin{array}{cc} R_{1313} & R_{1323} \\ R_{1323} & R_{2323} \end{array}\right](x',0).
\end{equation}
Above, $R_{ijkl}$ are the Riemann curvatures of the metric $G$,
evaluated at the midplate points $x'\in\omega$.
\end{lemma}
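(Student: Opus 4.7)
The plan is to split the argument into two parts: first establish the claimed form $II = \tfrac{x_3^2}{2}\bar{II}(x')$ and the first equality in \eqref{tensor2}, which is bookkeeping; then identify $\bar{II}_{\sym}$ with the Riemann tensor block via the Gauss-type computation enabled by Lemma \ref{new_basis}.

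\textbf{Step 1.} In the non-oscillatory case (\ref{NO}), formula \eqref{d0no} gives $\vec d_0(x',x_3) = \tfrac{x_3^2}{2}\tilde d_0(x')$, so that $\nabla_{\mathrm{tan}}\vec d_0(x) = \tfrac{x_3^2}{2}\nabla\tilde d_0(x')$. The non-oscillatory Taylor expansion yields $\mathcal{G}_2(x) = x_3^2\,\partial_{33}G(x',0)$ (since $G^h = G_{\mid\bar\Omega^h}$ with $\mathcal{G}_2(x',t) = t^2\bar{\mathcal{G}}_2(x')$). Substituting into the definition of $II$ in \eqref{123} shows that $II$ is a multiple of $\tfrac{x_3^2}{2}$, giving the announced formula for $\bar{II}$. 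The first equality in \eqref{tensor2} is then immediate because both $(\nabla\vec b_0)^{\trsp}\nabla\vec b_0$ and $\partial_{33}G(x',0)_{2\times 2}$ are symmetric.

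\textbf{Step 2.} For the second equality, introduce the frame $\vec e_1 = \partial_1 y_0$, $\vec e_2 = \partial_2 y_0$, $\vec e_3 = \vec b_0$, so that $\langle\vec e_k,\vec e_l\rangle = G_{kl}(x',0)$. Lemma \ref{new_basis} (specialized to the three columns) gives $\partial_j\vec e_k = \Gamma_{jk}^l(x',0)\,\vec e_l$ simultaneously for $j\in\{1,2\}$ and $k\in\{1,2,3\}$, and in addition $\tilde d_0 = \Gamma_{33}^l(x',0)\,\vec e_l$. Consequently, I would compute
\begin{equation*}
\langle\partial_i\vec b_0,\partial_j\vec b_0\rangle = \Gamma_{i3}^k\Gamma_{j3}^l G_{kl},
\qquad
\partial_j\tilde d_0 = \bigl(\partial_j\Gamma_{33}^l + \Gamma_{jk}^l\Gamma_{33}^k\bigr)\vec e_l,
\end{equation*}
all at $x_3=0$.

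\textbf{Step 3.} To re-express $\langle\partial_i y_0,\partial_j\tilde d_0\rangle = (\partial_j\Gamma_{33}^l + \Gamma_{jk}^l\Gamma_{33}^k)G_{il}$, I would first apply $\partial_j$ to the identity $\Gamma_{33}^l G_{il} = \partial_3 G_{i3} - \tfrac{1}{2}\partial_i G_{33}$ (which follows by lowering the index in the Christoffel formula $\Gamma_{33}^l = \tfrac{1}{2}G^{lm}(2\partial_3 G_{m3} - \partial_m G_{33})$), and then invoke metric compatibility $\partial_j G_{il} = \Gamma_{jl}^p G_{pi} + \Gamma_{ji}^p G_{pl}$ to cancel one term and isolate
\begin{equation*}
\langle\partial_i y_0,\partial_j\tilde d_0\rangle = \partial_j\partial_3 G_{i3} - \tfrac{1}{2}\partial_{ij}G_{33} - \Gamma_{33}^l\Gamma_{ji}^p G_{pl}.
\end{equation*}
Symmetrizing in $(i,j)$ (using symmetry of $\Gamma_{ji}^p$ in lower indices) and assembling, one obtains
\begin{equation*}
\bar{II}_{ij,\sym} = \Gamma_{i3}^k\Gamma_{j3}^l G_{kl} + \tfrac{1}{2}\bigl(\partial_{3j}G_{i3} + \partial_{3i}G_{j3} - \partial_{ij}G_{33} - \partial_{33}G_{ij}\bigr) - \Gamma_{33}^l\Gamma_{ij}^p G_{pl}.
\end{equation*}

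\textbf{Step 4.} Finally, I would compare with the Riemann formula recalled in Theorem \ref{Kirchhoff_optimal}: specializing $(i,k,l,m) = (i,3,j,3)$ gives exactly the same expression, upon identifying the two connection-squared terms via the symmetries $G_{np}\Gamma_{3j}^n\Gamma_{i3}^p = \Gamma_{i3}^k\Gamma_{j3}^l G_{kl}$ and $G_{np}\Gamma_{33}^n\Gamma_{ij}^p = \Gamma_{33}^l\Gamma_{ij}^p G_{pl}$. Hence $\bar{II}_{ij,\sym} = R_{i3j3}(x',0)$ for $i,j=1,2$, which completes the proof. The main obstacle is the purely algebraic bookkeeping in Step 3, where metric compatibility has to be applied in the right direction to make the non-symmetric terms collapse and the quadratic-in-$\Gamma$ term appear with the correct sign; all other steps are direct consequences of Lemma \ref{new_basis} and the explicit formula \eqref{d0no} for $\tilde d_0$.
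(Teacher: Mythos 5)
Your proof is correct and follows the same overall strategy as the paper: first peel off the common factor $\tfrac{x_3^2}{2}$ using \eqref{d0no} and the form of $\mathcal{G}_2$ in the non-oscillatory case, then reduce the second equality in \eqref{tensor2} to a Christoffel-symbol identity at $x_3=0$ via Lemma \ref{new_basis}. The only organizational difference is that the paper imports the intermediate formula \eqref{y1} from \cite[Theorem 6.2]{LRR} (obtained from the defining relation \eqref{d0}) and combines it with \eqref{y22}, whereas you work entirely from the frame decomposition \eqref{newba}, differentiating $\tilde d_0=\Gamma_{33}^l\vec e_l$, lowering the index via $\Gamma_{33}^l G_{il}=\partial_3 G_{i3}-\tfrac12\partial_i G_{33}$, using metric compatibility to cancel the extraneous quadratic-$\Gamma$ term, and then matching against the displayed formula for $R_{iklm}$; this is a slightly more self-contained rendering of the same bookkeeping and yields the identical conclusion.
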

\begin{proof}
We argue as in the proof of \cite[Theorem 6.2]{LRR}. Using (\ref{d0}) we arrive at:
\begin{equation}\label{y1}
\begin{split}
\big((\nabla y_0)^{\trsp}\nabla \tilde d_0\big)_\sym = & -
\big[\langle\partial_{ij}y_0, \vec d_0\rangle \big]_{i,j=1,2} 
+ \frac{1}{2}\partial_{33}G(x',0) _{2\times 2}\\ & +
\Big[R_{i2j3} - G_{np}\big(\Gamma_{i3}^n\Gamma_{j3}^p -
\Gamma_{ij}^n\Gamma_{33}^p\big)\Big]_{i,j=1, 2} (x',0).
\end{split}
\end{equation}
Directly from (\ref{newba}) we hence obtain:
\begin{equation}\label{y22}
\langle \partial_{ij}\vec y_0, \tilde d_0\rangle =
G_{np}\Gamma_{ij}^n\Gamma_{33}^{p}, \qquad 
\langle \partial_{i}\vec b_0, \partial_j\vec b_0\rangle = G_{np}\Gamma_{i3}^n\Gamma_{j3}^{p},
\end{equation}
which together with (\ref{y1}) yields (\ref{tensor2}).
\end{proof}

\medskip

With the use of Lemma \ref{pomoc_4}, it is quite straightforward to
derive the ultimate form of the energy $\mathcal{I}_4^O$ in the
non-oscillatory setting. In particular, the proof of the following
result is a special case of the proof of Theorem \ref{Gliminf4-2} below.

\begin{thm}\label{novk}
Assume (\ref{NO}) and (\ref{vonKa}). The expression (\ref{I4O-new}) becomes:
\begin{equation*}
\begin{split}
\mathcal{I}_4(V,\mathbb S)  = &~\frac{1}{2}\int_{\omega}\mathcal{Q}_2\Big(x', 
\mathbb{S}(x') + \frac{1}{2}\nabla V(x')^{\trsp}\nabla V(x') +
  \frac{1}{24} \nabla \vec b_0(x')^T\nabla\vec b_0(x')-
  \frac{1}{48} \partial_{33}G(x', 0)_{2\times 2}\Big) \,\rmd x' \\ & 
+ \frac{1}{24}\int_{\omega}\mathcal{Q}_2\Big(x', \nabla y_0(x')^{\trsp}\nabla \vec p(x')+ \nabla
V(x')^{\trsp}\nabla\vec b_0(x') \Big) \,\rmd x' \\ & 
+ \frac{1}{1440}\int_{\omega}\mathcal{Q}_2\Big(x', 
\left[\begin{array}{cc} R_{1313} & R_{1323} \\ R_{1323} & R_{2323} \end{array}\right](x',0)\Big) \,\rmd x',
\end{split}
\end{equation*}
where $R_{ijkl}$ stand for the Riemann curvatures of the metric $G$.
\end{thm}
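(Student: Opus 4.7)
The plan is to specialize the compact formula for $\mathcal{I}_4^O$ obtained in Theorem \ref{Gliminf4} to the non-oscillatory setting and then decompose the resulting integrand in the orthonormal Legendre basis on $(-\tfrac12,\tfrac12)$. The starting observation is that in case (\ref{NO}), the formulas (\ref{d0no}) give $\vec d_0(x',x_3)=\tfrac{x_3^2}{2}\tilde d_0(x')$ and $\mathcal{G}_2(x',x_3)=x_3^2\partial_{33}G(x',0)$, so by Lemma \ref{pomoc_4} the metric-related tensor $II$ in (\ref{123}) reduces to $II(x)=\tfrac{x_3^2}{2}\bar{II}(x')$ with $\bar{II}_{\sym}(x')=\big[R_{i3j3}(x',0)\big]_{i,j=1,2}$. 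Consequently the full integrand
\[
\mathcal{T}(x',x_3) \;=\; I(x') + x_3\,III(x') + \tfrac{x_3^2}{2}\bar{II}(x')
\]
is already a quadratic polynomial in $x_3$, i.e.\ $\mathcal{T}\in\mathbb{E}_2$, so the ``$4$-excess'' projection $\mathcal{T}-\mathbb{P}_2\mathcal{T}$ vanishes identically; this is the structural reason why no excess term survives in the non-oscillatory case.

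Next, I would expand $\mathcal{T}$ in the orthonormal Legendre basis $\{p_0,p_1,p_2\}$ on $(-\tfrac12,\tfrac12)$. Matching the monomials $x_3^0,x_3^1,x_3^2$ yields
\[
\mathcal{T} \;=\; \Big(I + \tfrac{1}{24}\bar{II}\Big)\,p_0 \;+\; \tfrac{1}{\sqrt{12}}\,III\cdot p_1 \;+\; \tfrac{1}{12\sqrt{5}}\,\bar{II}\cdot p_2.
\]
Since $\{p_i\}$ is orthonormal in $L^2(-\tfrac12,\tfrac12)$ and $\mathcal{Q}_2(x',\cdot)$ is a quadratic form depending only on the symmetric part of its argument, the $x_3$-integral of $\mathcal{Q}_2(x',\mathcal{T})$ decouples into three summands, one per Legendre coefficient. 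Squaring the scalars $1$, $\tfrac{1}{\sqrt{12}}$, $\tfrac{1}{12\sqrt{5}}$ and combining with the overall $\tfrac12$ prefactor of $\mathcal{I}_4^O$ produces exactly the prefactors $\tfrac12$, $\tfrac{1}{24}$, $\tfrac{1}{1440}$ announced in the theorem, multiplying respectively the stretching, bending and curvature contributions. The bending piece is already $\tfrac{1}{24}\|III\|_{\mathcal{Q}_2}^2$ with $III=(\nabla y_0)^{\trsp}\nabla \vec p+(\nabla V)^{\trsp}\nabla\vec b_0$, and the curvature piece is already $\tfrac{1}{1440}\|\bar{II}_{\sym}\|_{\mathcal{Q}_2}^2$ in view of Lemma \ref{pomoc_4}.

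Finally I would identify the $p_0$-coefficient with the stated stretching density. From $\vec d_0=\tfrac{x_3^2}{2}\tilde d_0$ one has $\int_{-1/2}^{1/2}\vec d_0\,\rmd x_3=\tfrac{1}{24}\tilde d_0$, so
\[
I_{\sym} \;=\; \mathbb{S} + \tfrac12(\nabla V)^{\trsp}\nabla V - \tfrac{1}{24}\big((\nabla y_0)^{\trsp}\nabla\tilde d_0\big)_{\sym}.
\]
The identities (\ref{y1})--(\ref{y22}) derived inside the proof of Lemma \ref{pomoc_4} then give
\[
\big((\nabla y_0)^{\trsp}\nabla\tilde d_0\big)_{\sym} \;=\; \bar{II}_{\sym} - (\nabla\vec b_0)^{\trsp}\nabla\vec b_0 + \tfrac12\partial_{33}G(x',0)_{2\times 2},
\]
and substituting this into $I_{\sym}+\tfrac{1}{24}\bar{II}_{\sym}$ cancels the $\bar{II}_{\sym}$ contribution and leaves precisely
\[
\mathbb{S}+\tfrac12(\nabla V)^{\trsp}\nabla V+\tfrac{1}{24}(\nabla\vec b_0)^{\trsp}\nabla\vec b_0-\tfrac{1}{48}\partial_{33}G(x',0)_{2\times 2},
\]
matching the stretching density in the theorem. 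I do not expect any genuine obstacle: the proof is essentially Legendre-basis bookkeeping once Lemma \ref{pomoc_4} is in hand. The only point requiring mild care is the consistent use of symmetrisation (licensed by the fact that $\mathcal{Q}_2(x',\cdot)$ sees only the symmetric part) when identifying the $p_0$-coefficient with the announced stretching density.
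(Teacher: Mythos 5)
Your proposal is correct and follows essentially the same route as the paper: the paper explicitly states that Theorem~\ref{novk} is a special case of Theorem~\ref{Gliminf4-2}, whose proof performs the same projection onto $\mathbb{E}_2$ (trivial here, since $\mathcal{T}\in\mathbb{E}_2$) and decomposition along the orthonormal Legendre basis, together with Lemma~\ref{pomoc_4} to identify the curvature and stretching terms. Your Legendre coefficients, the cancellation of $\bar{II}_{\sym}$ in the $p_0$-coefficient, and the resulting prefactors $\tfrac12$, $\tfrac{1}{24}$, $\tfrac{1}{1440}$ all check out.
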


\begin{remark}\label{flat4}
In the particular, ``flat'' case of $G=Id_3$ the functional
$\mathcal{I}_4$ reduces to the classical von K\'arm\'an energy below.
Indeed, the unique solution to
(\ref{system}) is: $y_0=id$, $\vec b_0=e_3$ and further:
\begin{equation*}
\begin{split}
& \mathscr V = \big\{V(x) = (\alpha x^\perp +\vec \beta, v(x)); ~
\alpha\in\mathbb{R}, ~ \vec \beta\in\mathbb{R}^2, ~ v\in W^{2,2}
(\omega)\big\},\\
& \mathscr S = \big\{\sym\nabla w; ~ w\in W^{1,2} (\omega, \mathbb{R}^2)\big\}.
\end{split}
\end{equation*}
Given $V\in \mathscr V$, we have $\vec p = (-\nabla
v, 0)$ and thus: 
$$\mathcal{I}_4(V, \mathbb{S}) = \frac{1}{2}\int_\omega
  \mathcal{Q}_2\big(x', \sym\nabla w+\frac{1}{2}(\alpha^2Id_2 + \nabla
  v\otimes\nabla v)\big) \,\rmd x'+\frac{1}{24}\int_\omega
  \mathcal{Q}_2\big(x', \nabla^2 v\big) \,\rmd x'.$$ 
Absorbing the stretching $\alpha^2Id_2$ into $\sym\nabla w$, the above
energy can be expressed in a familiar form:
\begin{equation}\label{vkflat}
\mathcal{I}_4(v,w) = \frac{1}{2}\int_\omega
  \mathcal{Q}_2\big(x', \sym\nabla w+\frac{1}{2} \nabla
  v\otimes\nabla v\big) \,\rmd x'+\frac{1}{24}\int_\omega
  \mathcal{Q}_2\big(x', \nabla^2 v\big) \,\rmd x',
\end{equation} 
as a function of the out-of-plane scalar displacement
$v$ and the in-plane vector displacement $w$.
\end{remark}

\medskip

As done for the Kirchhoff energy $\mathcal{I}_2^O$ in Theorem
\ref{Gliminf}, we now identify conditions allowing $\mathcal{I}_4^O$ to
coincide with $\mathcal{I}_4$ of the effective metric $\bar G$, modulo
the introduced below order-$4$ excess term. 

\begin{thm}\label{Gliminf4-2}
In the setting of Theorem \ref{Gliminf4}, we have:
\begin{equation}\label{I4O-new}
\begin{split}
\mathcal{I}^O_4(V,\mathbb S) = & \,\frac{1}{2}
\int_\omega\mathcal{Q}_2\Big(x', \mathbb{S}+\frac{1}{2}(\nabla 
V)^{\trsp}\nabla V + B_0\Big) \,\rmd x' \\ & + \frac{1}{24}\int_\omega\mathcal{Q}_2\Big(x', (\nabla
V)^{\trsp}\nabla \vec b_0 + (\nabla y_0)^{\trsp}\nabla \vec p  + 12 B_1 \Big)
\,\rmd x' \\ & + \frac{1}{1440}\int_\omega\mathcal{Q}_2\Big(x', (\nabla
\vec b_0)^{\trsp}\nabla \vec b_0 + (\nabla y_0)^{\trsp}\nabla \tilde d_0
- \frac{1}{2}(\bar{\mathcal{G}}_2)_{2\times  2}\Big) \,\rmd x' 
\\ & + \frac{1}{2}\dist^2_{\mathcal{Q}_2}\Big(II_{\sym}, \mathbb{E}_2\Big),
\end{split}
\end{equation}
where $\bar{\mathcal{G}}_1$ and $\bar{\mathcal{G}}_2$ are given in
(\ref{EO}), inducing $\tilde d_0$ via (\ref{d0no}) for
$\partial_3G=\bar{\mathcal{G}}_1$, and where we introduce the
following purely metric-related quantities:
\begin{equation}\label{g2bending}
\begin{split}
& \dist^2_{\mathcal{Q}_2}\big(II_{\sym}, \mathbb{E}_2\big)  =
\dist^2\bigg( \int_0^{x_3}\nabla(\mathcal G_1e_3)_{2\times 2, 
  \sym}\,\rmd t \\ & \qquad \qquad \qquad \qquad\qquad \qquad
- \Big[\Big\langle \int_0^{x_3}\mathcal G_1e_3 \,\rmd t
, [\Gamma_{ij}^1,~ \Gamma_{ij}^2, ~ \Gamma_{ij}^3](x',0)\Big\rangle \Big]_{i,j=1,2}\\
& \qquad \qquad \qquad \qquad\qquad \qquad
+ \frac{1}{2}\int_0^{x_3}(\mathcal G_1)_{33} \,\rmd t \,\big[\Gamma^3_{ij}(x',0)\big]_{i,j=1,2}
 - \frac{1}{4}(\mathcal{G}_2)_{2\times 2}, \,\mathbb{E}_2\bigg),
\end{split}
\end{equation}
\begin{equation}\label{g1bending}
\begin{split}
& B_0 = \frac{1}{24} (\nabla\vec b_0)^{\trsp}\nabla\vec b_0
- \frac{1}{4}\int_{-1/2}^{1/2}(\mathcal{G}_2)_{2\times 2}\,\rmd x_3 
= \frac{1}{24}\Big[\bar{\mathcal{G}}_{np}\Gamma_{i3}^n\Gamma_{j3}^{p}\Big]_{i,j=1,2} 
- \frac{1}{4}\int_{-1/2}^{1/2}(\mathcal{G}_2)_{2\times 2}\,\rmd x_3, \\
& B_1 =  (\nabla y_0)^{\trsp}\nabla \big(\int_{-1/2}^{1/2}x_3\vec d_0 \,\rmd
x_3\big) -\frac{1}{4}\int_{-1/2}^{1/2}x_3(\mathcal{G}_2)_{2\times
  2}\,\rmd x_3 \\ & \quad \, = -\nabla \big(\int_{-1/2}^{1/2}\frac{x_3^2}{2} 
{\mathcal{G}}_1e_3\,\rmd x_3\big)_{2\times 2} + 
\Big[\Big\langle \int_{-1/2}^{1/2}\frac{x_3^2}{2}
{\mathcal{G}}_1e_3\,\rmd x_3, [\Gamma_{ij}^1,
~ \Gamma_{ij}^2, ~ \Gamma_{ij}^3](x',0)\Big\rangle \Big]_{i,j=1,2} \\
& \qquad \; - \frac{1}{2} \int_{-1/2}^{1/2}\frac{x_3^2}{2}
(\mathcal{G}_1)_{33}\,\rmd x_3 \Big[\Gamma_{ij}^3(x',0)\Big]_{i,j=1,2}
-\frac{1}{4}\int_{-1/2}^{1/2}x_3(\mathcal{G}_2)_{2\times 2}\,\rmd x_3,
\end{split}
\end{equation}
By $\{\Gamma_{ij}^k\}$ we denote the Christoffel symbols of the
metric $\bar G$ in (\ref{EO}). The third term in (\ref{I4O-new})
equals  the scaled norm of the Riemann curvatures of the effective metric $\bar G$:
$$\frac{1}{1440}\int_{\omega}\mathcal{Q}_2\Big(x', 
\left[\begin{array}{cc} R_{1313} & R_{1323} \\ R_{1323} & R_{2323} \end{array}\right](x',0)\Big) \,\rmd x'.$$
The first three terms in $\mathcal{I}_4^{O}$ coincide with
$\mathcal{I}_4$ in Theorem \ref{novk} for the
effective metric $\bar G$ in (\ref{EO}), provided that the following
compatibility conditions hold:
\begin{equation}\label{constr}
\begin{split}
& \int_{-1/2}^{1/2} \big(15x_3^2 - \frac{9}{4}\big)\mathcal{G}_2(x', x_3)_{2\times 2} \,\rmd x_3 = 0,\\
& \frac{1}{4}\int_{-1/2}^{1/2} x_3\mathcal{G}_2(x', x_3)_{2\times 2} \,\rmd x_3  +
\nabla \big(\int_{-1/2}^{1/2}\frac{x_3^2}{2} {\mathcal{G}}_1e_3\,\rmd
x_3\big)_{2\times 2, \sym} \\ & \qquad \qquad \qquad \qquad \qquad -
\Big[\Big\langle \int_{-1/2}^{1/2}\frac{x_3^2}{2}
{\mathcal{G}}_1e_3\,\rmd x_3, [\Gamma_{ij}^1,
~ \Gamma_{ij}^2, ~ \Gamma_{ij}^3](x',0)\Big\rangle \Big]_{i,j=1,2} 
\\ & \qquad \qquad \qquad \qquad \qquad \qquad \qquad \qquad 
+ \frac{1}{2} \int_{-1/2}^{1/2}\frac{x_3^2}{2}
(\mathcal{G}_1)_{33}\,\rmd x_3 \Big[\Gamma_{ij}^3(x',0)\Big]_{i,j=1,2} = 0.
\end{split}
\end{equation}
\end{thm}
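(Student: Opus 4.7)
The plan is to orthogonally decompose the integrand $Tensor_4^O = I(x') + x_3 III(x') + II(x',x_3)$ of $\mathcal{I}_4^O$ (with $I, III, II$ as in Theorem \ref{Gliminf4}) in the Hilbert space $\mathbb{E}$, first along the subspace $\mathbb{E}_2$ and its orthogonal complement, then within $\mathbb{E}_2$ along the orthonormal Legendre basis $\{p_0, p_1, p_2\}$, identifying the resulting coefficients as the stretching, bending and curvature integrands in (\ref{I4O-new}).

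Since $I + x_3 III \in \mathbb{E}_1 \subset \mathbb{E}_2$, one immediately has $Tensor_4^O - \mathbb{P}_2(Tensor_4^O) = II - \mathbb{P}_2(II)$, so the projection residual contributes only the excess $\frac{1}{2}\dist^2_{\mathcal{Q}_2}(II_{\sym}, \mathbb{E}_2)$; rewriting $(\nabla y_0)^{\trsp}\nabla_{\mathrm{tan}}\vec d_0$ via the second identity of Lemma \ref{new_basis} (applied pointwise in $x_3$ to $\vec d_0 = Q_0^{\trsp,-1}(\cdot)$ from (\ref{d0})) converts $II - \mathbb{P}_2(II)$ into the displayed form (\ref{g2bending}) in terms of the Christoffel symbols of $\bar G$. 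Orthonormality of $\{p_k\}$ then yields $\|\mathbb{P}_2(Tensor_4^O)\|_{\mathcal{Q}_2}^2 = \sum_{k=0}^{2}\int_\omega \mathcal{Q}_2(x', c_k(x')) \,\rmd x'$ with $c_k(x') = \int_{-1/2}^{1/2} p_k(t)\bigl(I(x') + t\, III(x') + II(x', t)\bigr)\,\rmd t$.

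A direct moment computation then identifies each coefficient. For $k=0$, the $x_3 III$ contribution vanishes by parity and the formulas for $I$ and $\int II\,\rmd t$ combine (the two $(\nabla y_0)^{\trsp}\nabla \int \vec d_0\,\rmd t$ terms cancelling) to give $c_0 = \mathbb{S} + \frac{1}{2}(\nabla V)^{\trsp}\nabla V + B_0$ with $B_0$ as in (\ref{g1bending}), whence the coefficient $\frac{1}{2}$ of the stretching term. For $k=1$, the $I$-contribution vanishes and $c_1 = \frac{1}{\sqrt{12}}\bigl(III + 12\int x_3 II\,\rmd x_3\bigr)$; the $\frac{x_3^2}{2}(\nabla \vec b_0)^{\trsp}\nabla \vec b_0$ piece of $II$ is even in $x_3$ and integrates out, and what remains assembles into the bending integrand $(\nabla V)^{\trsp}\nabla\vec b_0 + (\nabla y_0)^{\trsp}\nabla\vec p + 12 B_1$, producing the prefactor $\frac{1}{24} = \frac{1}{2}\cdot \frac{1}{12}$. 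For $k=2$ only $II$ contributes; the key moment identities $\int(6t^2-\frac{1}{2})\int_0^t \mathcal{G}_1 e_3 \,\rmd t' \,\rmd t = \frac{1}{60}\bar{\mathcal{G}}_1 e_3$ (by integration by parts, recovering the weight $-60(2t^3 - t/2)$ of (\ref{EO})) and $\int(6t^2-\frac{1}{2})(\mathcal{G}_2)_{2\times 2} \,\rmd t = \frac{1}{30}(\bar{\mathcal{G}}_2)_{2\times 2}$ show that the $p_2$-moment of $II$ depends only on $\bar{\mathcal{G}}_1, \bar{\mathcal{G}}_2$; applying Lemma \ref{pomoc_4} to the effective metric $\bar G$ then identifies $c_2 = \frac{\sqrt{5}}{60}[R_{i3j3}(\bar G)]_{i,j=1,2}$ at $x_3=0$, yielding the prefactor $\frac{1}{1440}$.

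The final coincidence claim -- that the first three integrals reduce to $\mathcal{I}_4$ of Theorem \ref{novk} for $\bar G$ precisely under (\ref{constr}) -- is verified by computing $B_0^{\bar G}, B_1^{\bar G}$ via (\ref{d0no}) applied to $\bar G$ and comparing them against (\ref{g1bending}): the two residual differences turn out to be exactly the left-hand sides of the two compatibility conditions in (\ref{constr}). The principal obstacle is the $p_2$-coefficient computation, as the oscillatory $\vec d_0$ in (\ref{d0}) depends non-polynomially on $x_3$ through $\int_0^{x_3}\mathcal{G}_1$ and through the linear map $c(\cdot)$; one must carefully combine the moment identities above with the system (\ref{system}) for $y_0, \vec b_0$ and the tangential covariant derivative formula of Lemma \ref{new_basis} to recognize the cancellation of all non-curvature pieces and arrive at the pure Riemann curvature expression for $\bar G$.
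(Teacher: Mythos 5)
Your proposal is correct and follows essentially the same route as the paper's own proof: decompose $Tensor_4^O = I + x_3 III + II$ by first splitting off the orthogonal complement of $\mathbb{E}_2$ (yielding the excess $\frac{1}{2}\dist^2_{\mathcal{Q}_2}(II_{\sym},\mathbb{E}_2)$, since $I + x_3 III \in \mathbb{E}_2$ already), then expand $\mathbb{P}_2(Tensor_4^O)$ in the Legendre basis $\{p_0, p_1, p_2\}$ to identify the stretching, bending, and curvature coefficients, and finally invoke Lemma \ref{new_basis} for formula (\ref{g2bending})--(\ref{g1bending}) and Lemma \ref{pomoc_4} for the curvature identification. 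Your moment identities $\int(6t^2-\frac12)\int_0^t \mathcal{G}_1 e_3 = \frac{1}{60}\bar{\mathcal{G}}_1 e_3$ (via integration by parts using the vanishing of $\int p_2$) and $\int(6t^2-\frac12)(\mathcal{G}_2)_{2\times 2} = \frac{1}{30}(\bar{\mathcal{G}}_2)_{2\times 2}$ are exactly the content the paper dismisses as "easy to check," and the cancellation of the $\nabla\int\vec d_0$ terms in the $p_0$-coefficient is likewise what the paper uses implicitly in its Stretching formula. The verification of (\ref{constr}) by computing $B_0^{\bar G}, B_1^{\bar G}$ and comparing is the same final step the paper takes (the paper's phrasing "equivalence of the constraints... follows by a direct inspection").
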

\begin{proof}
We write:
\begin{equation*}
\begin{split}
\mathcal{I}_4^O(V, \mathbb{S})= \frac{1}{2} \|I+ x_3 III+ II\|_{\mathcal{Q}_2}^2
= \frac{1}{2} \|I+ x_3 III+ \mathbb{P}_2(II)\|_{\mathcal{Q}_2}^2+
\frac{1}{2}\dist^2 _{\mathcal{Q}_2}\big(II_{\sym}, \mathbb{E}_2\big),
\end{split}
\end{equation*}
and further decompose the first term above along the Legendre projections:
\begin{equation*}
\begin{split}
\|I+ x_3& III+ \mathbb{P}_2(II)\|_{\mathcal{Q}_2}^2 =
\big\|\int_{-1/2}^{1/2} (I+ x_3 III+ II) p_0(x_3)  \,\rmd x_3
\big\|_{\mathcal{Q}_2}^2 \\ & \qquad + 
\big\|\int_{-1/2}^{1/2} (I+ x_3 III+ II) p_1(x_3)  \,\rmd x_3 \big\|_{\mathcal{Q}_2}^2 + 
\big\|\int_{-1/2}^{1/2} (I+ x_3 III+ II) p_2(x_3)  \,\rmd x_3 \big\|_{\mathcal{Q}_2}^2 \\ & =
\underbrace{\big\|I+ \int_{-1/2}^{1/2} II  \,\rmd x_3 \big\|_{\mathcal{Q}_2}^2}_{\Large{Stretching}} +
\frac{1}{12} \underbrace{\big\|III + 12\int_{-1/2}^{1/2}x_3 II \,\rmd
  x_3 \big\|_{\mathcal{Q}_2}^2}_{\Large{Bending}}  + 
\underbrace{\big\|\int_{-1/2}^{1/2} p_2(x_3)II  \,\rmd x_3
\big\|_{\mathcal{Q}_2}^2}_{\Large{Curvature}}.
\end{split}
\end{equation*}
To identify the four indicated terms in $\mathcal{I}_4^O$, observe that
$\int_{-1/2}^{1/2}x_3\int_0^{x_3}\mathcal{G}_1 \,\rmd x_3 = - 
\int_{-1/2}^{1/2}\frac{x_3^2}{2}\mathcal{G}_1 \,\rmd x_3$ and:
\begin{equation*}
\begin{split}
\dist^2_{\mathcal{Q}_2}&\big(II_{\sym}, \mathbb{E}_2\big) \\ & = \dist^2_{\mathcal{Q}_2}\bigg(\Big((\nabla
y_0)^{\trsp}\nabla\big(Q_0^{\trsp, -1}\int_0^{x_3}\mathcal G_1e_3 \,\rmd t
- \frac{1}{2}Q_0^{\trsp, -1}\int_0^{x_3}(\mathcal G_1)_{33} \,\rmd t
\,e_3\big)\Big)_{\sym} - \frac{1}{4}(\mathcal{G}_2)_{2\times 2},
\mathbb{E}_2\bigg).
\end{split}
\end{equation*}
Thus the formulas in (\ref{g2bending}) and (\ref{g1bending}) follow directly from Lemma
\ref{new_basis} and (\ref{y22}). There also holds:
\begin{equation*}
\begin{split}
Stretching = & \int_\omega\mathcal{Q}_2\Big(x', \mathbb{S}+\frac{1}{2}(\nabla
V)^{\trsp}\nabla V + \frac{1}{24}(\nabla \vec b_0)^{\trsp}\nabla \vec b_0 
- \frac{1}{4}\int_{-1/2}^{1/2}(\mathcal{G}_2)_{2\times 2}\,\rmd
x_3\Big) \,\rmd x', \\
Bending = & \, \frac{1}{12}\int_\omega\mathcal{Q}_2\Big(x', (\nabla
V)^{\trsp}\nabla \vec b_0 + (\nabla y_0)^{\trsp}\nabla \vec p 
\\ & \qquad\qquad \qquad 
+ 12 (\nabla y_0)^{\trsp}\nabla \big(\int_{-1/2}^{1/2}x_3\vec d_0 \,\rmd x_3\big)
- 3\int_{-1/2}^{1/2}x_3(\mathcal{G}_2)_{2\times 2}\,\rmd x_3\Big),
\,\rmd x' \\ Curvature = & \, \frac{1}{720}\int_\omega\mathcal{Q}_2\Big(x', (\nabla
\vec b_0)^{\trsp}\nabla \vec b_0 + 60 (\nabla y_0)^{\trsp}\nabla 
\big(\int_{-1/2}^{1/2}(6x_3^2-\frac{1}{2})\vec d_0 \,\rmd x_3\big) \\
& \qquad\qquad\qquad \;\;
- 15\int_{-1/2}^{1/2}(6x_3^2-\frac{1}{2})(\mathcal{G}_2)_{2\times  2}\,\rmd x_3\Big) \,\rmd x'.
\end{split}
\end{equation*}
It is easy to check that with the choice of the effective metric
components $\bar{\mathcal{G}}_1e_3$ and
$(\bar{\mathcal{G}}_2)_{2\times 2}$ and denoting $\tilde d_0$ the
corresponding vector in (\ref{d0no}), we have:
\begin{equation*}
\begin{split}
Curvature = & \, \frac{1}{720}\int_\omega\mathcal{Q}_2\Big(x', (\nabla
\vec b_0)^{\trsp}\nabla \vec b_0 + (\nabla y_0)^{\trsp}\nabla \tilde d_0
- \frac{1}{2}(\bar{\mathcal{G}}_2)_{2\times  2}\Big) \,\rmd x'.
\end{split}
\end{equation*}
This proves (\ref{I4O-new}).  Equivalence of the constraints (\ref{constr}) with:
$$\int_{-1/2}^{1/2}(\mathcal{G}_2)_{2\times 2}\,\rmd x_3 =
\frac{1}{12}(\bar{\mathcal{G}}_2)_{2\times 2} \quad \mbox{ and } \quad
(B_1)_{\sym} =0 \qquad\mbox{in }\, \omega,$$ 
follows by a direct inspection. We now invoke Lemma \ref{pomoc_4} to
complete the proof.
\end{proof}

\medskip

\begin{remark}
Observe that the vanishing of the $4$-excess and curvature terms in $\mathcal{I}_4^O$:
$$II_{\sym}\in\mathbb{E}_2\quad\mbox{ and } \quad Curvature =0,$$ 
are the necessary conditions for $\min\mathcal{I}_4^O=0$ and they are
equivalent to $II_{\sym}\in \mathbb{E}_1$. Consider now a particular case
scenario of $\bar{\mathcal{G}}=Id_3$ and $\mathcal{G}_1=0$, where the
spaces $\mathscr V$ and $\mathscr S$ are given in Remark
\ref{flat4}, together with $\vec d_0=0$. Then, the above necessary
condition reduces to: $(\mathcal{G}_2)_{2\times 2}\in\mathbb{E}_1$, namely:
$$(\mathcal{G}_2)_{2\times 2}(x', x_3)=x_3\mathcal{F}_1(x') +
\mathcal{F}_0(x') \qquad \mbox{for all }\, x=(x', x_3)\in \bar\Omega.$$
It is straightforward that, on a simply connected midplate $\omega$, both terms:
\begin{equation*}
\begin{split}
& Stretching = \int_\omega\mathcal{Q}_2\Big(x',\sym\nabla w +\frac{1}{2}\nabla v\otimes \nabla v -
\frac{1}{4}\mathcal{F}_0\Big) \,\rmd x', \quad  Bending =
\int_\omega\mathcal{Q}_2\Big(x', \nabla^2v + \frac{1}{4}\mathcal{F}_1 \Big) \,\rmd x',
\end{split}
\end{equation*}
can be equated to $0$ by choosing appropriate
displacements $v$ and $w$, if and only if there holds:
\begin{equation}\label{mama}
\mbox{curl}\,\mathcal{F}_1=0,\qquad \mbox{curl}^{\trsp}\mbox{curl}\, \mathcal{F}_0
+ \frac{1}{4}\det\mathcal{F}_1 = 0\quad\mbox{ in }\, \omega.
\end{equation}
Note that these are precisely the linearised Gauss-Codazzi-Mainardi
equations corresponding to the metric $Id_2+2h^2\mathcal{F}_0$ and
shape operator $\frac{1}{2}h\mathcal{F}_1$ on $\omega$. We see that 
these conditions are automatically satisfied in presence of
(\ref{constr}), when $(\mathcal{G}_2)_{2\times 2}\in\mathbb{E}_1$
actually results in $(\mathcal{G}_2)_{2\times 2}=0$.  
An integrability criterion similar to (\ref{mama}) can be derived also
in the general case, under $II_{\sym}\in\mathbb{E}_1$ and again it automatically
holds with (\ref{constr}). This last statement will be pursued in the next section.
\end{remark}

\section{Identification of the $Ch^4$ scaling regime and coercivity of
the limiting energy $\mathcal{I}_4$} \label{sec8h4}

\begin{thm}\label{vK_optimal}
The energy scaling beyond the von K\'arm\'an regime:
$$\lim_{h\to 0}\frac{1}{h^4}\inf \mathcal{E}^h = 0$$
is equivalent to the following condition, on a simply connected $\omega$:
\begin{itemize}
\item[(i)] in the oscillatory case (\ref{O}), in presence of the
  compatibility conditions (\ref{constr})
\begin{equation}\label{y10}
\left[~\mbox{\begin{minipage}{15cm} \vspace{1mm}
$II_{\sym}\in\mathbb{E}_2$ and (\ref{y11}) holds with $G$ replaced by the effective
metric $\bar{G}$ in (\ref{EO}). This condition involves 
$\bar{\mathcal{G}}$, $\bar{\mathcal{G}}_1$ and
$(\bar{\mathcal{G}}_2)_{2\times 2}$ terms of $\bar{G}$. \vspace{1mm} 
\end{minipage}}\right.
\end{equation}
\item[(ii)]  in the non-oscillatory case (\ref{NO})
\begin{equation}\label{y11}
\left[~\mbox{\begin{minipage}{15cm} All the Riemann curvatures
      of the metric $G$ vanish on $\omega\times \{0\}$:
$$R_{ijkl}(x',0) = 0\qquad
\mbox{for all } \; x'\in \omega\quad \mbox{ and all } \;i,j,k,l=1\ldots 3.$$
\end{minipage}}\right.
\end{equation}
\end{itemize}
\end{thm}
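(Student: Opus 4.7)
The plan is to exploit Corollary \ref{26new}, which reduces the statement to characterising when $\min\mathcal{I}_4^O = 0$. Theorem \ref{Gliminf4-2} decomposes
\[
\mathcal{I}_4^O(V,\mathbb{S}) = Stretching + \tfrac{1}{12}\,Bending + Curvature + \tfrac{1}{2}\dist^2_{\mathcal{Q}_2}(II_\sym, \mathbb{E}_2)
\]
as a sum of four nonnegative terms, each of which must separately vanish at a zero-value minimiser. Moreover, the hypothesis $\lim h^{-4}\inf\mathcal{E}^h = 0$ is \emph{a fortiori} stronger than $\lim h^{-2}\inf\mathcal{E}^h=0$, so Theorem \ref{Kirchhoff_optimal} is available as a free preliminary: in case (\ref{NO}) the curvatures $R_{1212},R_{1213},R_{1223}$ of $G$ already vanish on $\omega\times\{0\}$, while in case (\ref{O}) one has $(\mathcal{G}_1)_{2\times 2}\in\mathbb{E}_1$ together with the analogous vanishing for the effective metric $\bar G$.

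For case (ii), I would first invoke Lemma \ref{pomoc_4}, which gives $II_\sym = \tfrac{x_3^2}{2}\bar{II}(x')\in\mathbb{E}_2$, so that the Excess term vanishes automatically. The Curvature summand equals $\tfrac{1}{1440}\|[R_{i3j3}(x',0)]_{i,j=1,2}\|_{\mathcal{Q}_2}^2$ and hence vanishes iff $R_{1313},R_{1323},R_{2323}$ vanish on $\omega\times\{0\}$. Together with the three Kirchhoff curvatures this exhausts the six independent Riemann components in dimension three, so necessity of (\ref{y11}) is immediate. For sufficiency I would argue that under (\ref{y11}) the metric $G$ is Riemann-flat on $\omega\times\{0\}$; on the simply connected midplate this allows one to refine the Ansatz of Lemma \ref{higher_order_scaling_sequence} by an extra $x_3^3$-order correction compensating the previously uncontrolled $II$-contribution, producing a sequence $u^h$ with $\mathcal{E}^h(u^h)=o(h^4)$. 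Equivalently, this exhibits $(V,\mathbb{S})\in\mathscr{V}\times\mathscr{S}$ at which both Stretching and Bending vanish, obtained by solving the linearised Gauss--Codazzi--Mainardi system made integrable by the curvature hypothesis and combined with the characterisation of $\mathscr{S}$ recalled after (\ref{5}).

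For case (i), the hypothesis (\ref{constr}) is built into the statement, so Theorem \ref{Gliminf4-2} identifies the Stretching, Bending and Curvature summands of $\mathcal{I}_4^O$ with those of $\mathcal{I}_4$ for the effective metric $\bar G$ from (\ref{EO}), while the Excess summand vanishes iff $II_\sym\in\mathbb{E}_2$. Applying case (ii) to $\bar G$ — which by (\ref{EO}) depends only on $\bar{\mathcal{G}}$, $\bar{\mathcal{G}}_1$ and $(\bar{\mathcal{G}}_2)_{2\times 2}$ — yields that $\min\mathcal{I}_4^O=0$ holds iff (\ref{y11}) is satisfied for $\bar G$ together with the excess-vanishing condition $II_\sym\in\mathbb{E}_2$, which is precisely (\ref{y10}).

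The main obstacle is the sufficiency direction in case (ii): producing admissible $(V,\mathbb{S})$ that annihilate both Stretching and Bending under the nonlinear constraint $((\nabla y_0)^\trsp\nabla V)_\sym = 0$, together with verifying $\mathbb{S}\in\mathscr{S}$. The full vanishing of the Riemann tensor on $\omega\times\{0\}$ precisely furnishes the Frobenius-type integrability needed on the simply connected $\omega$; however, establishing membership of the resulting finite strain in $\mathscr{S}$ requires care (compare items (i)--(ii) after (\ref{5})) and is where the geometric content of the theorem resides, particularly when the Gauss curvature $\kappa((\nabla y_0)^\trsp\nabla y_0)$ fails to be sign-definite.
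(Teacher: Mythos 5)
Your reduction via Corollary \ref{26new}, your decomposition of $\mathcal{I}_4^O$ into nonnegative summands, and your necessity argument (counting the six independent $R_{ijkl}$ in dimension three, three from the Kirchhoff level via Theorem \ref{Kirchhoff_optimal} and three from the Curvature summand) all match the paper's approach. The reduction of case (i) to case (ii) via Theorem \ref{Gliminf4-2} under (\ref{constr}) is likewise correct.

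However, there is a genuine gap in your sufficiency argument for case (ii), and the concerns you raise in your final paragraph are unfounded: no Gauss--Codazzi integrability argument, no $x_3^3$-corrected recovery ansatz, and no delicate analysis of $\mathscr{S}$ are needed. The key observation, which you quote but do not exploit, is precisely Lemma \ref{pomoc_4}: when (\ref{y11}) holds, equation (\ref{tensor2}) reads
\begin{equation*}
(\nabla \vec b_0)^{\trsp}\nabla \vec b_0 + \big((\nabla y_0)^{\trsp}\nabla\tilde d_0\big)_{\sym} - \tfrac{1}{2}\partial_{33}G(x',0)_{2\times 2} = 0,
\end{equation*}
so that $\tfrac{1}{24}(\nabla \vec b_0)^{\trsp}\nabla\vec b_0 - \tfrac{1}{48}\partial_{33}G(x',0)_{2\times 2} = -\tfrac{1}{24}\big((\nabla y_0)^{\trsp}\nabla\tilde d_0\big)_{\sym}$. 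Taking $V=0$ (hence $\vec p=0$ by (\ref{r3}), killing the Bending term outright) and $\mathbb{S}=\tfrac{1}{24}\big((\nabla y_0)^{\trsp}\nabla\tilde d_0\big)_{\sym}$ annihilates the Stretching term of Theorem \ref{novk}; the Curvature term vanishes by hypothesis, and the Excess is zero because $II$ is already a quadratic polynomial in $x_3$. Membership $\mathbb{S}\in\mathscr{S}$ is immediate: $\tilde d_0$ is smooth, and $\mathscr{S}$ in (\ref{VS}) contains every $\big((\nabla y_0)^{\trsp}\nabla w\big)_{\sym}$ with $w\in W^{1,2}$ without any need for the closure operation or the surface-curvature case distinction you refer to after (\ref{5}). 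Your proposed route, which would be far harder, is unnecessary; once Lemma \ref{pomoc_4} is invoked for sufficiency rather than only for the $\mathbb{E}_2$-membership of $II_\sym$, the argument closes in a line.
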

\begin{proof}
By Corollary \ref{26new}, it suffices to determine the equivalent
conditions for $\min \mathcal{I}_4=0$. Clearly, $\min \mathcal{I}_4=0$ implies
(\ref{y11}). Vice versa, if (\ref{y11}) holds, then:
$$\frac{1}{24}(\nabla \vec
b_0)^{\trsp}\nabla \vec b_0-\frac{1}{48}\partial_{33}G(x',0) = -\frac{1}{24}\big((\nabla
y_0)^{\trsp}\nabla \tilde d_0\big)_{\sym},$$
by Lemma \ref{pomoc_4}. Taking $V=\vec p=0$ and
$\mathbb{S}=\frac{1}{24}\big((\nabla y_0)^{\trsp}\nabla \tilde d_0\big)_{\sym}\in
\mathscr S$, we get $\mathcal{I}_4(V,\mathbb{S})=0$. 
\end{proof}

\smallskip

We further have the following counterpart of the essential uniqueness of the
minimizing isometric immersion $y_0$ statement
in Theorem \ref{Kirchhoff_optimal}:

\begin{thm}\label{kernel_vK}
In the non-oscillatory setting (\ref{NO}), assume (\ref{y11}). Then
$\mathcal{I}_4(V,\mathbb{S})=0$ if and only if:
\begin{equation}\label{ml0}
V=Sy_0+c \quad \mbox{and} \quad \mathbb{S}=\frac{1}{2}\Big((\nabla
  y_0)^{\trsp}\nabla \big(S^2y_0 + \frac{1}{12}\tilde
  d_0\big)\Big)_{\sym} \quad\mbox{ on }\; \omega,
\end{equation}
for some skew-symmetric matrix $S\in so(3)$ and a vector $c\in\mathbb{R}^3$.
\end{thm}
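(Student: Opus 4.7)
The plan is to analyze each of the three integrands in $\mathcal{I}_4$ (in the form of Theorem \ref{novk} reduced to (\ref{NO})) separately. Under hypothesis (\ref{y11}) the Riemann-curvature integrand is identically zero, and since each $\mathcal{Q}_2(x',\cdot)$ is non-degenerate on $\mathbb{R}^{2\times 2}_{\sym}$, the identity $\mathcal{I}_4(V,\mathbb S)=0$ becomes equivalent to the pointwise vanishing of both the stretching and the bending integrands.

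For the implication $(\Leftarrow)$, let $V=Sy_0+c$ with $S\in so(3)$ and set $\vec p=S\vec b_0$, so that $[\nabla V,\vec p]=SQ_0$ is consistent with (\ref{r3}). The bending integrand collapses to $\bigl((\nabla y_0)^{\trsp}(S+S^{\trsp})\nabla \vec b_0\bigr)_{\sym}=0$ by skew-symmetry. For the stretching, note $(\nabla V)^{\trsp}\nabla V=-(\nabla y_0)^{\trsp}S^2\nabla y_0=-\bigl((\nabla y_0)^{\trsp}\nabla(S^2y_0)\bigr)_{\sym}$, the last equality because $S^2$ is symmetric. On the other hand, Lemma \ref{pomoc_4} combined with (\ref{y11}) yields $\tfrac{1}{24}(\nabla\vec b_0)^{\trsp}\nabla\vec b_0-\tfrac{1}{48}\partial_{33}G(\cdot,0)_{2\times 2}=-\tfrac{1}{24}\bigl((\nabla y_0)^{\trsp}\nabla\tilde d_0\bigr)_{\sym}$, so the prescribed $\mathbb S$ of (\ref{ml0}) precisely cancels the stretching integrand.

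For the converse $(\Rightarrow)$, vanishing of the stretching integrand uniquely determines $\mathbb S$ as the expression just derived; it remains to show that vanishing of the bending integrand, together with $V\in\mathscr V$, forces $V=Sy_0+c$ for some constant $S\in so(3)$. Define $\hat S\in W^{1,2}(\omega,so(3))$ through $[\nabla V,\vec p]=\hat SQ_0$ (cf.\ (\ref{r3})), so that $\partial_iV=\hat S\,\partial_iy_0$ and $\vec p=\hat S\vec b_0$. Differentiating the latter, substituting into the bending identity, and cancelling $\bigl\langle\hat S\partial_iy_0,\partial_j\vec b_0\bigr\rangle+\bigl\langle\partial_iy_0,\hat S\partial_j\vec b_0\bigr\rangle$ by skew-symmetry of $\hat S$, one reduces the bending to
\[
\bigl\langle\partial_iy_0,\partial_j\hat S\,\vec b_0\bigr\rangle+\bigl\langle\partial_jy_0,\partial_i\hat S\,\vec b_0\bigr\rangle=0 \qquad\text{for }i,j=1,2, \qquad (\ast\ast)
\]
while Clairaut's identity $\partial_{ij}V=\partial_{ji}V$ gives
\[
\partial_i\hat S\,\partial_jy_0=\partial_j\hat S\,\partial_iy_0 \qquad\text{for }i,j=1,2.\qquad (\ast)
\]

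The core of the argument is then a pointwise linear-algebra computation showing that $(\ast)$ and $(\ast\ast)$ together force $\partial_j\hat S=0$. Writing $\partial_j\hat S\,v=\omega_j\times v$ for the associated axial vectors $\omega_j\in\mathbb{R}^3$, one tests $(\ast)$ with $(i,j)=(1,2)$ against $\partial_1y_0$, $\partial_2y_0$, $\vec b_0$ in turn, and $(\ast\ast)$ for the pairs $(1,1)$, $(1,2)$, $(2,2)$; after cyclic-triple-product manipulations and eliminating the remaining $\omega_1$--$\omega_2$ coupling, one finds that each of $\omega_1$ and $\omega_2$ is orthogonal to all three vectors $\partial_1y_0\times\partial_2y_0$, $\vec b_0\times\partial_1y_0$, and $\vec b_0\times\partial_2y_0$. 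Since $\det Q_0>0$, these three cross products form, up to that nonzero scalar factor, the basis of $\mathbb{R}^3$ dual to $\{\partial_1y_0,\partial_2y_0,\vec b_0\}$, hence $\omega_1=\omega_2=0$. Consequently $\hat S\equiv S\in so(3)$ is constant, $\nabla(V-Sy_0)=0$, and $V=Sy_0+c$ as claimed. This short algebraic step is the only non-routine obstacle; all preceding reductions follow by direct substitution using Lemma \ref{pomoc_4} and the definitions of $\vec p$ and $\tilde d_0$.
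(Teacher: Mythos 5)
Your proof is correct and follows the same overall strategy as the paper: under (\ref{y11}) the curvature term drops out, nondegeneracy of $\mathcal{Q}_2$ reduces $\mathcal{I}_4=0$ to pointwise vanishing of the stretching and bending integrands, and the key step is to introduce $\hat S=[\nabla V,\vec p]Q_0^{-1}\in so(3)$ and show that vanishing bending forces $\hat S$ to be constant. The only divergence is in how that last pointwise step is organized. The paper examines the skew-symmetric matrix $\bar S^i = Q_0^{\trsp}(\partial_i\hat S)Q_0$ entry by entry: the $(1,2)$ entry vanishes directly from $V\in\mathscr{V}$, the $(j,3)$ entries are exactly the (automatically symmetric) bending, and skew-symmetry kills the diagonal — so $\bar S^i=0$ iff bending $=0$. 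You instead pass to the axial vectors $\omega_1,\omega_2$ of $\partial_1\hat S,\partial_2\hat S$ and combine the Clairaut identity $(\ast)$ with the symmetrized bending $(\ast\ast)$; pairing $(\ast)$ against $\partial_1y_0,\partial_2y_0,\vec b_0$ gives $\langle\omega_i,\partial_1y_0\times\partial_2y_0\rangle=0$ and the cross-coupling relation $\langle\omega_1,\partial_2y_0\times\vec b_0\rangle=\langle\omega_2,\partial_1y_0\times\vec b_0\rangle$, which together with $(\ast\ast)$ yields $\langle\omega_i,\partial_jy_0\times\vec b_0\rangle=0$ for all $i,j$, hence $\omega_1=\omega_2=0$ since $\{\partial_1y_0\times\partial_2y_0,\,\partial_1y_0\times\vec b_0,\,\partial_2y_0\times\vec b_0\}$ is a basis. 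This works; it is exactly the paper's computation repackaged (Clairaut plays the role of the paper's $\langle\bar S^ie_1,e_2\rangle=0$ step, and the axial-vector orthogonalities are precisely the entries of $\bar S^i$), so I would call it the same proof with slightly different bookkeeping rather than a genuinely different argument. The $(\Leftarrow)$ direction and the final determination of $\mathbb S$ via Lemma \ref{pomoc_4} match the paper as well.
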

\begin{proof}
We first observe that the bending term $III$ in (\ref{123}) is already symmetric, because:
\begin{equation*}
\begin{split}
\Big[\langle\partial_iy_0, \partial_j\vec p\rangle +
\langle\partial_iV, \partial_j\vec b_0\rangle\Big]_{i,j=1,2} & = \Big[\partial_j\big( 
\langle\partial_iy_0, \vec p\rangle + \langle\partial_iV, \vec b_0\rangle\big)\Big]_{i,j=1,2}
- \Big[\langle\partial_{ij}y_0, \vec p\rangle +
\langle\partial_{ij}V, \vec b_0\rangle\Big]_{i,j=1,2}\\ & = - \Big[\langle\partial_{ij}y_0, \vec p\rangle +
\langle\partial_{ij}V, \vec
b_0\rangle\Big]_{i,j=1,2}\in\mathbb{R}^{2\times 2}_{\sym},
\end{split}
\end{equation*}
where we used the definition of $\vec p$ in (\ref{r3}).
Recalling (\ref{tensor2}), we see that $\mathcal{I}_4(V,\mathbb{S})=0$ if and only if:
\begin{equation}\label{ml1}
\begin{split}
&\mathbb{S} + \frac{1}{2}(\nabla V)^{\trsp}\nabla V -
 \frac{1}{24}\big((\nabla y_0)^{\trsp}\nabla \tilde d_0\big)_{\sym}=0,\\
&(\nabla y_0)^{\trsp}\nabla\vec p + (\nabla V)^{\trsp}\nabla \vec b_0 =0.
\end{split}
\end{equation}

Consider the matrix field $S=\big[\nabla V,~\vec p\big]Q_0^{-1}\in
W^{1,2}(\omega, so(3))$ as in (\ref{r3}). Note that:
\begin{equation}\label{ml2}
\begin{split}
\partial_i S = & ~\big[\nabla \partial_i V, ~ \partial_i\vec p\big]Q_0^{-1}-
\big[\nabla V, ~ \vec p\big]Q_0^{-1}(\partial_iQ_0)Q_0^{-1}=
Q_0^{-1,\trsp}\bar S^i Q_0^{-1}\qquad\mbox{for }\; i=1,2\\
&\mbox{ where } \quad \bar S^i = Q_0^{\trsp}\big[\nabla \partial_iV,
~ \partial_i\vec p\big]+ \big[\nabla V, ~ \vec p\big]^{\trsp}(\partial_iQ_0)\in L^2(\omega, so(3)).
\end{split}
\end{equation}
Then we have:
$$\langle \bar S^ie_1, e_2\rangle
= \partial_i\big(\langle \partial_2y_0, \partial_iV\rangle +
\langle \partial_2 V, \partial_iy_0\rangle\big) - \big(\langle \partial_{12}y_0, \partial_iV\rangle +
\langle \partial_{12}V, \partial_iy_0\rangle\big) =0,$$
because the first term in the right hand side above equals $0$ in view of
$V\in \mathscr V$, whereas the second
term equals $\partial_2\langle\partial_1y_0, \partial_1V\rangle$ for $i=1$ and
$\partial_1\langle\partial_2y_0, \partial_2V\rangle$ for $i=2$, both
expression being null again in view of $V\in\mathscr V$.
We now claim that $\{\bar S^i\}_{i=1,2}=0$ is actually equivalent to the second
condition in (\ref{ml1}). It suffices to examine the only possibly
nonzero components:
\begin{equation}\label{ml3}
\langle \bar S^ie_3, e_j\rangle = \langle \partial_
jy_0, \partial_i\vec p\rangle +\langle \partial_jV, \partial_i\vec b_0\rangle = \big((\nabla
y_0)^{\trsp}\nabla\vec p + (\nabla V)^{\trsp}\nabla \vec
b_0\big)_{ij} \qquad \mbox{for all }\; i,j=1,2,
\end{equation}
proving the claim.

Consequently, the second condition in (\ref{ml1}) is equivalent to $S$
being constant, to the effect that $\nabla V = \nabla (Sy_0)$, or
equivalently that $V-Sy_0$ is a constant vector. In this case:
$$\mathbb{S}=\frac{1}{2}(\nabla y_0)^{\trsp}\nabla\big(S^2 y_0) +
\frac{1}{24}\big((\nabla y_0)^{\trsp}\nabla \tilde d_0 \big)_{\sym} =
\frac{1}{2}\Big((\nabla y_0)^{\trsp}\nabla \big(S^2y_0+
\frac{1}{12}\tilde d_0 \big)\Big)_{\sym}$$
is equivalent to  the first condition in (\ref{ml1}),  as $(\nabla V)^{\trsp}\nabla
V = -(\nabla y_0)^{\trsp}S^2\nabla y_0$. The proof is done.
\end{proof}

\medskip

From Theorem \ref{kernel_vK} we deduce its quantitative version, that
is a counterpart of Theorem \ref{coercive2} in the present von
K\'arm\'an regime:

\begin{thm}\label{coercive4}
In the non-oscillatory setting (\ref{NO}), assume (\ref{y11}). Then
for all $V\in \mathscr V$ there holds:
\begin{equation}\label{8.1}
\dist^2_{W^{2,2}(\omega, \mathbb{R}^3)} \Big(V, ~ \big\{Sy_0 + c; ~
S\in so(3),~ c\in\mathbb{R}^3\big\}\Big) \leq
C\int_\omega\mathcal{Q}_2\big(x', (\nabla y_0)^{\trsp}\nabla \vec p +
(\nabla V)^{\trsp}\nabla \vec b_0\big) \,\rmd x'
\end{equation}
with a constant $C>0$ that depends on $G,\omega$ and $W$ but it is
independent of $V$.
\end{thm}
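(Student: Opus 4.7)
\medskip

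\noindent\emph{Plan of proof.} The strategy is to introduce the skew-symmetric field $S=[\nabla V,\vec p]Q_0^{-1}\in W^{1,2}(\omega, so(3))$ from \eqref{r3}, and to show that the right hand side of \eqref{8.1} dominates $\|\nabla S\|_{L^2(\omega)}^2$. Once this is in place, Poincar\'e-type inequalities together with an algebraic identity for $\nabla^2 V$ will yield the full $W^{2,2}$ estimate. Revisiting the proof of Theorem \ref{kernel_vK}, the auxiliary matrix fields $\bar S^i\in L^2(\omega, so(3))$ in \eqref{ml2} have their $(1,2)$-components automatically zero by virtue of $V\in\mathscr V$, while their $(3,j)$-entries for $j=1,2$ are recorded in \eqref{ml3} as the components of $B\doteq(\nabla y_0)^{\trsp}\nabla\vec p + (\nabla V)^{\trsp}\nabla\vec b_0$, which is symmetric as shown at the opening of that proof. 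Skew-symmetry of $\bar S^i$ then pins down every entry, giving $\sum_i\|\bar S^i\|_{L^2(\omega)}^2\leq C\|B\|_{L^2(\omega)}^2$. Using non-degeneracy of $\mathcal Q_2(x',\cdot)$ on $\R^{2\times 2}_{\sym}$ and smoothness of $Q_0$, this produces
\begin{equation*}
\|\nabla S\|_{L^2(\omega)}^2 \leq C\int_\omega \mathcal Q_2\big(x', (\nabla y_0)^{\trsp}\nabla\vec p + (\nabla V)^{\trsp}\nabla\vec b_0\big)\,\rmd x'.
\end{equation*}

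Setting $S_0=\fint_\omega S\,\rmd x'\in so(3)$ (skew-symmetry is preserved under averaging), Poincar\'e's inequality gives $\|S-S_0\|_{L^2(\omega)}^2\leq C\|\nabla S\|_{L^2(\omega)}^2$; since $[\nabla V,\vec p]-S_0 Q_0=(S-S_0)Q_0$ with $Q_0\in L^\infty$, this yields $L^2$-control of both $\nabla V-S_0\nabla y_0$ and $\vec p-S_0\vec b_0$. A further Poincar\'e application provides $c\in\R^3$ such that $\|V-(S_0 y_0+c)\|_{L^2(\omega)}^2$ is also bounded by the right hand side of \eqref{8.1}. For the second derivatives, exploiting $[\nabla V,\vec p]^{\trsp}=-Q_0^{\trsp}S$ (by skew-symmetry of $S$) in the defining equation of $\bar S^i$ from \eqref{ml2} produces the algebraic identity $[\nabla\partial_i V,\partial_i\vec p]=Q_0^{-\trsp}\bar S^i + S\,\partial_i Q_0$ for $i=1,2$. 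Applying this to $e_j$ for $j=1,2$ and using $(\partial_i Q_0)e_j=\partial_i\partial_j y_0$, then subtracting $S_0\partial_i\partial_j y_0$, we obtain
\begin{equation*}
\partial_i\partial_j V - S_0\partial_i\partial_j y_0 = Q_0^{-\trsp}\bar S^i e_j + (S-S_0)\partial_i\partial_j y_0.
\end{equation*}
The first summand is $L^2$-controlled by $\|\bar S^i\|_{L^2(\omega)}$ and the second by $\|\nabla^2 y_0\|_{L^\infty(\omega)}\|S-S_0\|_{L^2(\omega)}$ (using smoothness of $y_0$ from Theorem \ref{Kirchhoff_optimal}); both are bounded by the right hand side of \eqref{8.1}. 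Combining the three estimates produces \eqref{8.1} with the explicit choice $(S_0,c)\in so(3)\times\R^3$, which a fortiori bounds the distance.

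The main technical obstacle is the algebraic identification in the first paragraph---namely, that \emph{all} components of the skew field $\bar S^i$ are controlled by the bending integrand. This relies crucially on the vanishing of the $(1,2)$-entry of $\bar S^i$, which is precisely the consequence of $V\in\mathscr V$ isolated at the start of the proof of Theorem \ref{kernel_vK}; without this observation the right hand side of \eqref{8.1} would fail to coerce the full gradient of $S$, and the $W^{2,2}$-coercivity would break down.
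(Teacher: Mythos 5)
Your proof is correct, but it takes a genuinely different route from the paper. The paper proceeds by compactness and contradiction: it normalizes $\|V_n\|_{W^{2,2}}=1$ with $V_n\perp_{W^{2,2}}\mathscr{V}_{lin}$ and the bending integrand tending to zero, extracts a weak limit, invokes Theorem~\ref{kernel_vK} and orthogonality to conclude the weak limit vanishes, and then upgrades to strong $W^{2,2}$ convergence to $0$ by reading off $\langle\partial_j y_0,\partial_{ik}V_n\rangle\to 0$ and $\langle\vec b_0,\partial_{ij}V_n\rangle\to 0$ from the skew-symmetry relations, contradicting the normalization. Your argument is direct and constructive: you observe that the skew fields $\bar S^i$ from \eqref{ml2} have $(1,2)$-entry identically zero (a consequence of $V\in\mathscr V$, already isolated in the proof of Theorem~\ref{kernel_vK}), so that \emph{every} entry of $\bar S^i$ is a component of $B=(\nabla y_0)^{\trsp}\nabla\vec p+(\nabla V)^{\trsp}\nabla\vec b_0$; this yields $\|\nabla S\|_{L^2}\leq C\|B\|_{L^2}$ pointwise-algebraically. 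Two applications of the Poincar\'e--Wirtinger inequality then produce the explicit pair $S_0=\fint_\omega S\,\rmd x'\in so(3)$, $c=\fint_\omega(V-S_0 y_0)\,\rmd x'$, and the identity $\partial_{ij}V=Q_0^{-\trsp}\bar S^i e_j+S\,\partial_{ij}y_0$ (a clean consequence of $[\nabla V,\vec p]^{\trsp}=-Q_0^{\trsp}S$) closes the $W^{2,2}$ estimate. What your approach buys is an explicit approximant and a constant obtained from Poincar\'e constants and $\|Q_0^{-1}\|_\infty$, $\|\nabla^2 y_0\|_\infty$, rather than a non-constructive existence from compactness; what the paper's approach buys is brevity, since it never has to inventory the entries of $\bar S^i$. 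Both arguments are sound; the key algebraic fact you correctly identify as load-bearing --- that $(\bar S^i)_{12}=0$, so the bending term alone controls all of $\nabla S$ --- is exactly what the paper's ``permuting $i,j,k$'' step encodes implicitly.
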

\begin{proof}
We argue by contradiction. Since $\mathscr{V}_{lin}\doteq \{Sy_0 + c; ~ S\in so(3),
~ c\in\mathbb{R}^3\}$ is a linear subspace of $\mathscr{V}$ and
likewise the expression $III$ in (\ref{123}) is linear in $V$, with
its  kernel equal to $\mathscr{V}_{lin}$ in virtue of Theorem \ref{kernel_vK}, it
suffices to take a sequence $\{V_n\in\mathscr{V}\}_{n\to\infty}$ such  that:
\begin{equation}\label{ml4}
\begin{split}
& \|V_n\|_{W^{2,2}(\omega, \mathbb{R}^3)}=1, \qquad 
V_n\perp_{W^{2,2}(\omega, \mathbb{R}^3)}\mathscr{V}_{lin}\qquad\mbox{for all
} n, \\ & \mbox{and: }\quad (\nabla
y_0)^{\trsp}\nabla \vec p_n +(\nabla V_n)^{\trsp}\nabla \vec b_0\to
0\quad \mbox{strongly in } L^2(\omega,\mathbb{R}^{2\times 2}), \quad\mbox{as }\; n\to\infty.
\end{split}
\end{equation}
Passing to a subsequence if necessary and using the definition of
$\vec p$ in (\ref{r3}), it follows that:
\begin{equation}\label{ml4.5}
V_n\rightharpoonup V \quad\mbox{weakly in }
W^{2,2}(\omega,\mathbb{R}^3), \qquad \vec p_n\rightharpoonup \vec p \quad\mbox{weakly in }
W^{1,2}(\omega,\mathbb{R}^3). 
\end{equation}
Clearly, $Q_0^{\trsp}\big[\nabla V, ~ \vec p\big]\in L^2(\omega,
so(3))$ so that $V\in \mathscr{V}$, but also $(\nabla y_0)^{\trsp}\nabla \vec p +
(\nabla V)^{\trsp}\nabla \vec b_0=0$. Thus, Theorem \ref{kernel_vK}
and the perpendicularity assumption in (\ref{ml4}) imply:
$V=\vec p = 0$. We will now prove:
\begin{equation}\label{ml4.55}
V_n\to 0 \quad \mbox{ strongly in }\; W^{2,2}(\omega, \mathbb{R}^3),
\end{equation}
which will contradict the first (normalisation) condition in (\ref{8.1}).

As in (\ref{ml2}), the assumption $V_n\in \mathscr{V}$ implies that
for each $x'\in \omega$ and $i=1,2$, the following matrix (denoted previously by
$\bar S^i$) is skew-symmetric:
$$\bar Q_0^{\trsp}\big[\nabla \partial_iV_n, ~ \partial\vec p_n\big] +
\big[\nabla V_n,~\vec p_n\big]^{\trsp}(\partial_iQ_0)\in so(3).$$
Equating tangential entries and observing (\ref{ml4}), yields for every $i,j,k=1,2$:
$$\langle \partial_jy_0, \partial_{ik}V_n\rangle +
\langle \partial_ky_0, \partial_{ij}V_n\rangle = -\Big(\langle \partial_jV_n, \partial_{ik}y_0\rangle + 
\langle \partial_kV_n, \partial_{ij}y_0\rangle\Big)\to 0 \quad\mbox{strongly in }
L^{2}(\omega).$$
Permuting $i,j,k$ we eventually get:
$$\langle \partial_jy_0, \partial_{ik}V_n\rangle \to 0
\quad\mbox{strongly in } L^{2}(\omega) \qquad\mbox{for all }\; i,j,k=1,2.$$
On the other hand, equating off-tangential entries, we get by (\ref{ml4}) and
(\ref{ml4.5}) that for each $i=1,2$:
$$\langle \vec b_0, \partial_{ij}V_n\rangle = -\big((\nabla
y_0)^{\trsp}\nabla \vec p_n +(\nabla V_n)^{\trsp}\nabla \vec
b_0\big)_{ij} - \langle \vec p_n, \partial_{ij}y_0\rangle \to 0 \quad\mbox{strongly in }
L^{2}(\omega).$$
Consequently, $\{Q_0^{\trsp}\partial_{ij}V_n \to 0\}_{i,j=1,2}$ in $L^{2}(\omega,
\mathbb{R}^3)$, which implies convergence (\ref{ml4.55}) as claimed. This ends the proof of (\ref{8.1}).
\end{proof}

\begin{remark}\label{nocoer}
Although the kernel of the (nonlinear) energy $\mathcal{I}_4$,
displayed in Theorem \ref{kernel_vK}, is finite dimensional, 
the full coercivity estimate of the form below is {\em false}:
\begin{equation}\label{coer4}
\begin{split}
&\min_{S\in so(3), c\in \mathbb{R}^3} \Big(\|V - (Sy_0 + c)\|_{W^{2,2}(\omega, \mathbb{R}^3)}^2
+ \|\mathbb{S} - \frac{1}{2}\big((\nabla y_0)^{\trsp}\nabla \big(S^2y_0 -
\frac{1}{12}\tilde d_0\big)\big)_{\sym}\|_{L^2(\omega,
  \mathbb{R}^{2\times 2})}^2\Big) \\ & \qquad \qquad \quad \leq
C\mathcal{I}_4(V, \mathbb{S})\qquad \mbox{for all } \; (V,\mathbb{S})\in \mathscr
V\times \mathscr S.
\end{split}
\end{equation}
For a counterexample, consider the particular case of 
classical von K\'arm\'an functional (\ref{vkflat}), specified in Remark
\ref{flat4}. Clearly, $\mathcal{I}_4(v,w)=0$ if an only if $v(x) =
\langle a,x\rangle + \alpha$ and $w(x) =\beta x^\perp -
\frac{1}{2}\langle a,x\rangle a +\gamma$, for some $a\in\mathbb{R}^2$
and $\alpha, \beta, \gamma\in \mathbb{R}$. Note that (\ref{8.1})
reflects then the Poincar\'e inequality: $\int_\omega |\nabla
v-\fint_{\omega}\nabla v|^2 \,\rmd x'\leq C\int_\omega|\nabla^2v|^2
\,\rmd x'$, whereas (\ref{coer4}) takes the form:
\begin{equation}\label{coer5}
\min_{a\in \mathbb{R}^2} \Big(\int_\omega |\nabla v - a|^2 \,\rmd x' +\int_{\omega}
|\sym\nabla w + \frac{1}{2}a\otimes a|^2 \,\rmd x'\Big) \leq C\mathcal{I}_4(v, w).
\end{equation}
Let $\omega = B_1(0)$. Given $v\in W^{2,2}(\omega)$ such that $\det\nabla^2 v=0$, let
$w$ satisfy: $\sym\nabla w = -\frac{1}{2}\nabla v\otimes \nabla v$,
which results in vanishing of the first term in
(\ref{vkflat}). Neglecting the first term in the left hand side of
(\ref{coer5}), leads in this context to the following weaker form, which we below disprove:
\begin{equation}\label{coer6}
\min_{a\in \mathbb{R}^2} \int_\omega |\nabla v\otimes\nabla v -
a\otimes a|^2 \,\rmd x' \leq C \int_\omega |\nabla^2v|^2 \,\rmd x'.
\end{equation}

Define $v_n(x) = n (x_1+x_2) +\frac{1}{2}(x_1+x_2)^2$ for all $x=(x_1, x_2)\in\omega$.
Then $\nabla v_n = (n+x_1+ x_2) (1,1)$ and $\det\nabla^2v_n =0$.
Minimization in (\ref{coer6}) becomes:
$\min_{a\in\mathbb{R}^2}\int_\omega |(n+x_1+x_2)^2 (1,1)\otimes (1,1)-
a\otimes a|^2 \,\rmd x'$ and an easy explicit calculation yields the
necessary form of the minimizer: $a=\delta (1,1)$. Thus, the same minimization can
be equivalently written and estimated in:
$$4 \cdot \min_{\delta\in\mathbb{R}}  \int_\omega \big|(n+x_1+x_2)^2 -
\delta^2\big|^2 \,\rmd x'\sim 4n^2\to \infty \quad \mbox{ as }\; n\to\infty.$$
On the other hand, $|\nabla^2v_n|^2 = 4$ at each $x'\in\omega$. Therefore, the estimate
(\ref{coer6}) cannot hold.
\end{remark}

\section{Beyond the von K\'arm\'an regime: an example}\label{example_conformal}

Given a function $\phi\in\mathcal{C}^\infty((-\frac{1}{2}, \frac{1}{2}), \R)$, consider the conformal metric: 
$$G(x', x_3) = e^{2\phi(x_3)}Id_3\qquad \mbox{ for all }\, x=(x', x_3)\in\Omega^h.$$ 
The midplate metric $\bar{\mathcal{G}}_{2\times 2} = e^{2\phi(0)}Id_2$ has a smooth
isometric immersion $ y_0=e^{\phi(0)}id_2:\omega\to\R^2$ and thus  by
Theorem \ref{Glimsup} there must be: 
$$\inf \mathcal{E}^h\leq Ch^2.$$
By a computation, we get that the only possibly non-zero
Christoffel symbols of $G$ are:
$\Gamma_{11}^3=\Gamma_{22}^3=-\phi'(x_3)$ and $
\Gamma_{13}^1=\Gamma_{23}^2=\Gamma_{33}^3= \phi'(x_3)$,
while the only possibly nonzero Riemann curvatures are:
\begin{equation}\label{curva}
R_{1212}=-\phi'(x_3)^2e^{2\phi(x_3)}, \qquad
R_{1313}=R_{2323}=-\phi''(x_3)e^{2\phi(x_3)}.
\end{equation}
Consequently, the results of this paper provide the following hierarchy of
possible energy scalings:
\begin{itemize}
\item[(a)] $\{ch^2\leq \inf \mathcal{E}^h\leq Ch^2\}_{h\to 0}$ with
$c,C>0$. This scenario is equivalent to $\phi'(0)\neq 0$. The
functionals $\frac{1}{h^2}\mathcal{E}^h$
as in Theorems \ref{compactness_thm}, \ref{Gliminf} and \ref{Glimsup}
exhibit the indicated compactness properties and $\Gamma$-converge to the
following energy $\mathcal{I}_2$ defined on the
  set of deformations: $\{y\in W^{2,2}(\omega, \R^3); ~ (\nabla  y)^{\trsp}\nabla y = Id_2\}$:
$$\mathcal{I}_2(y) = \frac{1}{24}\int_\omega \mathcal{Q}_2\big(
\Pi_{y}-\phi'(0)Id_2\big) \,\rmd x'.$$
Here $\mathcal{Q}_2(F_{2\times 2}) = \min\big\{D^2W(Id_3)(\tilde
F, \tilde F); ~ \tilde F\in\R^{3\times 3} \mbox{ with } \tilde
F_{2\times 2} = F_{2\times 2}\big\}$.
\item[(b)] $\{ch^4\leq \inf \mathcal{E}^h\leq Ch^4\}_{h\to 0}$ with
  $c, C>0$. This scenario is equivalent to $\phi'(0)= 0$ and
  $\phi''(0)\neq 0$. The unique (up to rigid motions) minimizing
  isometric immersion is then $id_2:\omega\to\mathbb{R}^2$ and the
  functionals $\frac{1}{h^4}\mathcal{E}^h$ have the compactness and
  $\Gamma$-convergence properties as in Theorems \ref{thm: lower_bound_4},
  \ref{Gliminf4} and \ref{Glimsuph4}. The following limiting functional
  $\mathcal{I}_4$ is defined on the set of displacements $\{(v,w)\in
  W^{2,2}(\omega, \R)\times W^{1,2}(\omega,\R^2)\}$ as in Remark \ref{flat4}:
\begin{equation*}
\begin{split}
\mathcal{I}_4(v,w) = & ~ \frac{1}{2}\int_\omega \mathcal{Q}_2\big(
\sym\nabla w + \frac{1}{2}\nabla v\otimes\nabla v - \frac{1}{24}\phi''(0)Id_2\big) \,\rmd x'
\\ & ~ + \frac{1}{24}\int_\omega \mathcal{Q}_2\big( \nabla^2 v\big) \,\rmd x'
+ \frac{1}{1440} \phi''(0)^2|\omega| \mathcal{Q}_2\big(Id_2\big).
\end{split}
\end{equation*}
\item[(c)] $\{\inf \mathcal{E}^h\leq Ch^6\}_{h\to 0}$ with $C>0$. This
scenario is equivalent to $\phi'(0)= 0$ and $\phi''(0)= 0$
and in fact we have the following more precise result below. 
\end{itemize} 

\begin{thm}\label{conformal_allscalings}
Let $G(x', x_3) = e^{2\phi(x_3)}Id_3$, where $\phi^{(k)}(0)=0$ for
$k=1\ldots n-1$ up to some $n>2$. Then: $\inf \mathcal{E}^h\leq
Ch^{2n}$ and:
\begin{equation}\label{6limit}
\lim_{h\to 0}\frac{1}{h^{2n}}\inf\mathcal{E}^h \geq c_n\, \phi^{(n)}(0)^2 |\omega| \mathcal{Q}_2(Id_2),
\end{equation}
where $c_n>0$. In particular, if $\phi^{(n)}(0)\neq 0$ then we have: $ch^{2n}\leq
\inf \mathcal{E}^h\leq Ch^{2n}$ with $c, C>0$.
\end{thm}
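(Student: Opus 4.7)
The upper bound is immediate from the trial deformation $u^h=\mathrm{id}_{\Omega^h}$. With this choice $\nabla u^h (G^h)^{-1/2}(x)=e^{-\phi(x_3)}Id_3$, and the hypothesis $\phi^{(k)}(0)=0$ for $k=1,\ldots,n-1$ together with Taylor's formula gives $|\phi(x_3)|\leq C|x_3|^n$ on $\Omega^h$. Property (iv) of $W$ permits the expansion $W(e^{-\phi(x_3)}Id_3)=\tfrac12\mathcal{Q}_3(Id_3)\phi(x_3)^2+O(\phi(x_3)^3)$, and direct integration yields $\mathcal{E}^h(\mathrm{id})\leq Ch^{2n}$, with leading-order constant proportional to $\phi^{(n)}(0)^2|\omega|\mathcal{Q}_3(Id_3)$.

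For the lower bound (\ref{6limit}) the plan is to implement an iterated matched-asymptotic rigidity scheme that extends the compactness analysis of Theorems \ref{thm: lower_bound_4}--\ref{Gliminf4}. Fix a sequence $\{u^h\}$ with $\mathcal{E}^h(u^h)\leq Ch^{2n}$; since $n>2$ this is \emph{a fortiori} below the von K\'arm\'an scaling. By Theorem \ref{Gliminf4} the rescaled sequence produces a limiting couple $(V,\mathbb S)$ that minimises $\mathcal{I}_4$, and Theorem \ref{vK_optimal} implies this minimum is $0$: a direct use of (\ref{curva}) shows $R_{1212}(x',0)=-\phi'(0)^2 e^{2\phi(0)}=0$ and $R_{i3j3}(x',0)=-\phi''(0)e^{2\phi(0)}\delta_{ij}=0$ under the vanishing hypothesis on $\phi$. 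By Theorem \ref{kernel_vK} the limit $(V,\mathbb S)$ must then be an infinitesimal rigid motion of $y_0=e^{\phi(0)}\mathrm{id}_2$. Subtracting this zero-energy competitor from $u^h$ and re-applying the rigidity--compactness argument at the finer scale $h^{-6}\mathcal{E}^h$ reveals a next-order kernel obstruction; iterating $n-2$ times, the only surviving term arises from the Leibniz expansion of $\partial_3^{n-2}\big(\phi''(x_3)e^{2\phi(x_3)}\big)\big|_{x_3=0}$, whose unique non-vanishing summand (after all preceding derivatives of $\phi$ are killed) equals $\phi^{(n)}(0)e^{2\phi(0)}$. Passing to the $\Gamma$-liminf at order $h^{2n}$ and applying the quadratic form $\mathcal{Q}_2(x',\cdot)$ to the resulting $Id_2$-multiple produces (\ref{6limit}).

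The main obstacle is making the inductive step rigorous: at each stage $k=3,\ldots,n$ we need a quantitative rigidity statement of the type of Corollary \ref{cor_approx} but calibrated to the scale $h^{2k}$, together with a higher-order analogue of the auxiliary fields in (\ref{d-o})--(\ref{rs-o}) used to construct and subtract the zero-energy element of $\mathcal{I}_{2(k-1)}$. The conformal symmetry of $G=e^{2\phi}Id_3$ simplifies each step considerably because the $x'$-independence of $\phi$ trivialises all horizontal correctors and collapses the matched-asymptotic ansatz to a one-dimensional Taylor expansion in $x_3$, which is what permits the closed-form identification of the leading obstruction. The systematic general treatment is the content of the companion paper \cite{Lewicka_last}; the present theorem serves as the guiding special case.
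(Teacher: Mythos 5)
Your upper bound has a genuine error: you take $u^h=\mathrm{id}_{\Omega^h}$ and then claim that $\phi^{(k)}(0)=0$ for $k=1,\dots,n-1$ forces $|\phi(x_3)|\leq C|x_3|^n$. That is false unless you also assume $\phi(0)=0$, which the theorem does not. Taylor's formula only gives $|\phi(x_3)-\phi(0)|\leq C|x_3|^n$, so with the identity map the argument of $W$ is $e^{-\phi(x_3)}Id_3\approx e^{-\phi(0)}Id_3$, and when $\phi(0)\neq 0$ this stays a definite distance from $SO(3)$; by property (iii), $\mathcal{E}^h(\mathrm{id})$ is then bounded \emph{below} by a positive constant, not above by $Ch^{2n}$. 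The paper's competitor is $u^h=e^{\phi(0)}\mathrm{id}_3$, which produces $W(e^{\phi(0)-\phi(x_3)}Id_3)$ and kills the offending zeroth-order term; with that fix your expansion goes through and yields the correct upper bound.

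Your lower bound takes a genuinely different route from the paper and is not carried to completion. You propose an iterated matched-asymptotic/rigidity scheme: pass to the $h^4$ scale, identify a kernel element of $\mathcal I_4$, subtract it, rescale to $h^6$, and so on for $n-2$ rounds. You acknowledge yourself that the inductive step is not made rigorous, and several auxiliary statements you would need (a Corollary~\ref{cor_approx}-type rigidity at each scale $h^{2k}$, higher-order analogues of the corrector fields in (\ref{d-o})--(\ref{rs-o}), and a precise mechanism for subtracting the kernel and re-entering compactness) are left open. The paper avoids all of this: because the conformal metric has $\bar{\mathcal G}_{2\times 2}=e^{2\phi(0)}Id_2$ and an explicit flat immersion, the energy bound $\mathcal E^h(u^h)\leq Ch^{2n}$ yields \emph{directly} the rigidity estimate $\frac{1}{h}\int_{\Omega^h}|\nabla u^h - e^{\phi(0)}R^h|^2\leq Ch^{2n}$ with a single application of \cite{FJM02}; one then defines the displacement $V^h$ with the scaling $h^{-(n-1)}$ adapted to the target order, passes to the liminf, and observes that the resulting integrand is of the form $e^{-\phi(0)}\sym\nabla w - x_3 e^{-\phi(0)}\nabla^2 V^3 - \phi^{(n)}(0)\tfrac{x_3^n}{n!}Id_2$. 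The first two terms live in $\mathbb E_1$ (constant and linear polynomials in $x_3$), so by the orthogonal projection formula of Lemma~\ref{proj} the energy is bounded below by the squared $\mathcal Q_2$-distance of $\phi^{(n)}(0)\tfrac{x_3^n}{n!}Id_2$ from $\mathbb E_1$, which is positive precisely because $x_3^n$ with $n\geq 2$ has a nontrivial component orthogonal to $\{1,x_3\}$. That single projection step replaces your entire $(n-2)$-fold induction. Your instinct that the iterated scheme must exist in general is correct and is indeed the content of \cite{Lewicka_last}, but for this theorem it is overkill and, as written, incomplete.
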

\begin{proof}
{\bf 1.} For the upper bound, we compute:
\begin{equation*}
\begin{split}
\mathcal{E}^h\big(e^{\phi(0)}id_3\big) & =\frac{1}{h}\int_{\Omega^h}W\big(e^{\phi(0)-\phi(x_3)}Id_3\big) \,\rmd x
= \frac{1}{2h}\int_{\Omega^h}\mathcal{Q}_3\big(\phi^{(n)}(0)\frac{x_3^n}{n!}Id_3\big) 
+ \mathcal{O}(h^{2n+2})\,\rmd x \\ & = h^{2n} \bigg(
\frac{\phi^{(n)}(0)^2}{(n!)^2}\frac{1}{(2n+1) 2^{2n+1}} |\omega|
\mathcal{Q}_3(Id_3) + o(1)\bigg)\leq Ch^{2n},
\end{split}
\end{equation*}
where we used the fact that $e^{\phi(0)-\phi(x_3)} = 1-\phi^{(n)}(0)\frac{x_3^n}{n!}
+\mathcal{O}(|x_3|^{n+1})$.

\smallskip

{\bf 2.} To prove the lower bound (\ref{6limit}), let $\{u^h\in
W^{1,2}(\Omega^h, \R^3)\}_{h\to 0}$ satisfy $\mathcal{E}^h(u^h)\leq Ch^{2n}$. Then: 
\begin{equation*}
\begin{split}
\mathcal{E}^h(u^h) & \geq \frac{c}{h}\int_{\Omega^h}\dist^2\big(\nabla
u^h, e^{\phi(x_3)}SO(3)\big) \,\rmd x \\ & \geq \frac{c}{h}\int_{\Omega^h}\dist^2\big(\nabla
u^h, e^{\phi(0)}SO(3)\big) \,\rmd x  -
\frac{\bar c}{h}\int_{\Omega^h}\big|\phi^{(n)}(0)\frac{x_3^n}{n!} + \mathcal{O}(h^{n+1})\big|^2 \,\rmd x,
\end{split}
\end{equation*}
which results in: $\frac{1}{h}\int_{\Omega^h} \dist^2\big(e^{-\phi(0)}\nabla
u^h, SO(3)\big) \,\rmd x\leq Ch^{2n}$.
Similarly as in Lemma \ref{approx} and Corollary \ref{cor_approx}, it
follows that there exist approximating rotation fields $\{R^h\in
W^{1,2}(\omega, SO(3))\}_{h\to 0}$ such that:
\begin{equation}\label{approx_2n}
\frac{1}{h}\int_{\Omega^h}|\nabla u^h - e^{\phi(0)}R^h|^2 \,\rmd x\leq
Ch^{2n}, \qquad \int_{\omega}|\nabla R^h|^2 \,\rmd x\leq Ch^{2n-2}.
\end{equation}

As in sections \ref{2} and \ref{4}, we define the following displacement and deformation fields: 
\begin{equation*}
\begin{split}
& y^h(x', x_3) = (\bar R^h)^{\trsp}\big(u^h(x', hx_3) -
\fint_{\Omega^h}u^h\big)\in W^{1,2}(\Omega, \R^3), ~~
\mbox{where}~~\bar R^h = \mathbb{P}_{SO(3)}\fint_{\Omega^h}e^{-\phi(0)}\nabla u^h(x) \,\rmd x, \\
& V^h(x')= \frac{1}{h^{n-1}}\int_{-1/2}^{1/2}y^h(x', x_3) -
e^{\phi(0)}\big(id_2 + hx_3 e_3\big) \,\rmd x_3\in W^{1,2}(\omega, \R^3).
\end{split}
\end{equation*} 
In view of (\ref{approx_2n}), we obtain then the following
convergences (up to a not relabelled subsequence):
\begin{equation*}
\begin{split}
& y^h\to e^{\phi(0)}id_2 \quad \mbox{ in } \, W^{1,2}(\omega, \R^3), \qquad
\frac{1}{h}\partial_3y^h\to e^{\phi(0)}e_3 \mbox{ in } \, L^{2}(\omega, \R^3),\\
& V^h \to V\in W^{2,2}(\omega, \R^3) \quad \mbox{ in } \, W^{1,2}(\omega, \R^3), \quad
\frac{1}{h}(\nabla V^h)_{2\times 2, \sym}\rightharpoonup \sym\nabla
w\quad  \mbox{ weakly in } \, L^{2}(\omega, \R^{2\times 2}).
\end{split}
\end{equation*} 
This allows to conclude the claimed lower bound:
\begin{equation*}
\begin{split}
\liminf_{h\to 0}\frac{1}{h^{2n}}\mathcal{E}^h(u^h) & \geq \frac{1}{2}
\big\|e^{-\phi(0)}\sym\nabla w - x_3 e^{-\phi(0)}\nabla^2V^3 -
\phi^{(n)}(0)\frac{x_3^n}{n!}Id_2\big\|^2_{\mathcal{Q}_2} \\ & \geq 
\frac{1}{2} \Big\| \phi^{(n)}(0)\frac{x_3^n}{n!}Id_2 -
\mathbb{P}_1\Big(\phi^{(n)}(0)\frac{x_3^n}{n!}Id_2\Big)\Big\|^2_{\mathcal{Q}_2}  
\\ & = \frac{1}{2}\frac{\phi^{(n)}(0)^2}{(n!)^2}\cdot
\int_{-1/2}^{1/2}\big(x_3^n - \mathbb{P}_1(x_3^n)\big)^2 \,\rmd x_3\cdot
|\omega|\mathcal{Q}_2(Id_2)\\ &
= c_n \cdot \phi^{(n)}(0)^2 |\omega| \mathcal{Q}_2(Id_2),
\end{split}
\end{equation*} 
as in (\ref{6limit}), with the following constant $c_n$:
$$c_n =
\frac{1}{2^{2n+1}(n!)^2}\left\{\begin{array}{ll}\frac{(n-1)^2}{(2n+1)(n+2)^2}
    & \mbox{ for $n$ odd}\\ \frac{n^2}{(2n+1)(n+1)^2} & \mbox{ for $n$ even}.
\end{array}\right. $$
Observe that $c_2=\frac{1}{1440}$, consistently with the previous
direct application of Theorem \ref{Gliminf4}.
\end{proof}




\end{document}